\newcommand{\N}{\mathbb{N}}
\newcommand{\R}{\mathbb{R}}
\newcommand{\bS}{\mathbb{S}}
\newcommand{\T}{\mathbb{T}}
\newcommand{\Z}{\mathbb{Z}}
\newcommand{\cA}{\mathcal{A}}
\newcommand{\cC}{\mathcal{C}}
\newcommand{\cE}{\mathcal{E}}
\newcommand{\cH}{\mathcal{H}}
\newcommand{\cL}{\mathcal{L}}
\newcommand{\cM}{\mathcal{M}}
\newcommand{\cP}{\mathcal{P}}
\newcommand{\cR}{\mathcal{R}}
\newcommand{\cS}{\mathcal{S}}
\newcommand{\al}{\alpha}
\newcommand{\gam}{\gamma}
\newcommand{\del}{\delta}
\newcommand{\ep}{\varepsilon}
\newcommand{\lam}{\lambda}
\newcommand{\sig}{\sigma}
\newcommand{\om}{\omega}
\newcommand{\Del}{\Delta}
\newcommand{\Lam}{\Lambda}
\newcommand{\Li}{L^{\infty}}
\newcommand{\Lip}{{\rm Lip\,}}
\newcommand{\ol}{\overline}
\newcommand{\eig}{{\rm eig}\,}
\newcommand{\proj}{{\rm proj}\,}
\newcommand{\tr}{\operatorname{tr}}
\newcommand{\supp}{\operatorname{supp}}
\newcommand{\cl}{\operatorname{cl}}
\renewcommand{\div}{\operatorname{div}}
\newcommand{\Rr}{{\mathbb{R}}}
\newcommand{\Tt}{{\mathbb{T}}}
\newcommand{\Aa}{{\mathcal{A}}}
\newcommand{\Cg}{{\mathcal{C}}}
\newcommand{\Uu}{{\mathcal{U}}}
\newcommand{\Mm}{{\mathcal{M}}}
\newcommand{\Ss}{{\mathcal{S}}}
\def\leq{\leqslant}
\def\geq{\geqslant}
\numberwithin{equation}{section}
\newtheoremstyle{thmlemcorr}{10pt}{10pt}{\itshape}{}{\bfseries}{.}{10pt}{{\thmname{#1}\thmnumber{
#2}\thmnote{ (#3)}}}
\newtheoremstyle{thmlemcorr*}{10pt}{10pt}{\itshape}{}{\bfseries}{.}\newline{{\thmname{#1}\thmnumber{
\newtheoremstyle{defi}{10pt}{10pt}{\itshape}{}{\bfseries}{.}{10pt}{{\thmname{#1}\thmnumber{
#2}\thmnote{ (#3)}}}
\newtheoremstyle{remexample}{10pt}{10pt}{}{}{\bfseries}{.}{10pt}{{\thmname{#1}\thmnumber{
#2}\thmnote{ (#3)}}}
\newtheoremstyle{ass}{10pt}{10pt}{}{}{\bfseries}{.}{10pt}{{\thmname{#1}\thmnumber{
A#2}\thmnote{ (#3)}}}
\theoremstyle{thmlemcorr}
\newtheorem{theorem}{Theorem}
\numberwithin{theorem}{section}
\newtheorem{proposition}[theorem]{Proposition}
\theoremstyle{thmlemcorr*}
\newtheorem{theorem*}{Theorem}
\newtheorem{lemma*}[theorem]{Lemma}
\newtheorem{corollary*}[theorem]{Corollary}
\newtheorem{proposition*}[theorem]{Proposition}
\newtheorem{problem*}[theorem]{Problem}
\newtheorem{conjecture*}[theorem]{Conjecture}
\theoremstyle{defi}
\newtheorem{hyp}{Assumption}
\newtheorem{problem}{Problem}
\theoremstyle{remexample}
\newtheorem{remark}{Remark}
\theoremstyle{plain}
\newtheorem{teo}[theorem]{Theorem}
\newtheorem{lem}[theorem]{Lemma}
\newtheorem{pro}[theorem]{Proposition}
\newtheorem{cor}[theorem]{Corollary}
\theoremstyle{ass}
\begin{document}

\title[The large time profile]{The large time profile for Hamilton--Jacobi--Bellman equations}

\author{Diogo A. Gomes}
\address[D. A. Gomes]{
	King Abdullah University of Science and Technology (KAUST), CEMSE Division, Thuwal 23955-6900, Saudi Arabia.}
\email{diogo.gomes@kaust.edu.sa}

\author{Hiroyoshi Mitake}
\address[H. Mitake]{
	Graduate School of Mathematical Sciences, 
	University of Tokyo 
	3-8-1 Komaba, Meguro-ku, Tokyo, 153-8914, Japan}
\email{mitake@ms.u-tokyo.ac.jp}

\author{Hung V. Tran}
\address[Hung V. Tran]
{
	Department of Mathematics, 
	University of Wisconsin Madison, Van Vleck hall, 480 Lincoln drive, Madison, WI 53706, USA}
\email{hung@math.wisc.edu}

\thanks{
	The work of DG was partially supported by 
	 KAUST baseline funds and 
	KAUST OSR-CRG2017-3452.
	The work of HM was partially supported by the JSPS grants: KAKENHI \#19K03580, \#19H00639, \#17KK0093, \#20H01816. 
	The work of HT was partially supported by NSF grant DMS-1664424 and NSF CAREER grant DMS-1843320.
}

\date{\today}
\keywords{The large time profile; Hamilton--Jacobi--Bellman equations; Ergodic problems; Nonlinear adjoint methods}
\subjclass[2010]{
	35B40, %Asymptotic behavior of solutions, 
	37J50, %Action-minimizing orbits and measures
	49L25 %Viscosity solutions
}

\begin{abstract}
Here,  we study  
the large-time limit of viscosity solutions of the Cauchy problem for second-order Hamilton--Jacobi--Bellman equations with convex Hamiltonians in the torus.
This large-time limit 
 solves the corresponding stationary problem, sometimes called the ergodic problem. 
This problem, however, has multiple viscosity solutions 
and, thus, 
a key question is  which of these solutions is selected by the limit.
%the large-time limit is not completely determined by the aforementioned result.  
Here, we provide a representation for the viscosity solution to the Cauchy problem in terms of generalized holonomic measures. Then, we use this representation to characterize the large-time limit in terms of the initial data and generalized Mather measures.
In addition, we establish various results on generalized Mather measures and duality theorems that are of independent interest.  
\end{abstract}

\maketitle

\section{Introduction}

%{\sc Paragraph 1}
%
%{\bf Establish the importance of the field, provide background facts/information, define terminology in the title if necessary.}
%
%
%{\color{red}ADD}
%
%{\color{red} Go through the paper and check that all integrals have a domain of integration.}
%

Under a broad range of conditions, as time goes to infinity, 
viscosity solutions of the Cauchy problem for Hamilton--Jacobi--Bellman equations with convex Hamiltonians 
converge to stationary solutions. 
In the last two decades,
this matter was investigated in detail. 
For first-order Hamilton--Jacobi  equations, see, for example,  \cite{FATH4,  barles2000large, MR2237158, LMT, Ichihara-Ishii} and references therein. 
For second-order  Hamilton--Jacobi--Bellman equations, 
 the large-time convergence was studied in the non-degenerate case in \cite{Barles-Souganidis2},
and in the general (possibly degenerate) case in \cite{CGMT, LN}.   
However, the stationary problem may have multiple viscosity solutions
and, thus, a natural question is which 
of these viscosity solutions is selected. 
Here, 
in Theorem \ref{thm:profile},
we answer this question by giving a characterization of the limit
in terms of generalized Mather measures.

More precisely, we study the large time behavior of the solutions of the following problem. 
\begin{problem}
	\label{P1}
Let $\T^n =\R^n/\Z^n$ be the flat $n$-dimensional torus. Consider a non-degenerate
diffusion coefficient,  $a:\T^n\to [0,\infty)$.
Fix 
a Hamiltonian $H:\T^n \times \R^n \to \R$, 
and continuous initial data, $u_0:\T^n\to\R$.
Find
a (viscosity) solution, $u:\T^n\times[0,\infty)\to\R$, to  the Hamilton--Jacobi--Bellman equation
\begin{equation} \label{eq:1}
u_t -a(x)\Delta u + H(x,Du) = 0 \quad \text{ in }  \T^n \times (0,\infty), 
\end{equation}
where 
$Du$ and $\Del u$, respectively, denote the spatial gradient and Laplacian of $u$, 
and with the initial data
\begin{equation*}
%\label{eq:1bc}
u(x,0) = u_0(x) \quad \text{ on } \T^n.
\end{equation*}
\end{problem}

Here, we {\em always} work with viscosity solutions, and, thus, the term ``viscosity" is
omitted henceforth.

Our goal  is to study the large-time behavior of the solution of Problem \ref{P1} and provide a characterization of the asymptotic profile of $u(x,t)$ as $t\to \infty$. 
For that, we
work under the following assumptions. 

\begin{hyp}
\label{A1}
The functions
$u_0$, $a$, and $H$  are of class $C^2$. 
\end{hyp}

\begin{hyp}
	\label{A2}
For every $x\in \T^n$, the map $p \mapsto H(x,p)$ is strictly convex; that is,  
$D^2_{pp} H(x,p)>0$ for all $(x,p)\in\T^n\times \Rr^n$. 
\end{hyp}
\begin{hyp}
\label{A3}	
There exist $m>1$ and $C_0>0$ such that, for all $(x,p) \in \T^n \times \R^n$,
\[
\begin{cases}
\displaystyle \frac{|p|^m}{C_0} - C_0 \leq H(x,p) \leq C_0(|p|^m+1),\\
|D_x H(x,p)| \leq C_0(1+|p|^m),\\
|D_p H(x,p)| \leq C_0(1 + |p|^{m-1}).
\end{cases}
\]	
\end{hyp}
Because of the strict convexity in Assumption \ref{A2}  and the growth conditions in Assumption \ref{A3}, the Lagrangian, 
\begin{equation}
\label{legendre}
L(x,q)=\sup_{p\in\Rr^n} \left( p \cdot q-H(x,p) \right),
\end{equation}
is  finite for all $(x,q)\in\T^n\times\R^n$. 
Moreover, by increasing $C_0$,  if necessary, we have
\[
L(x,q)\geq \frac{|q|^{m'}}{C_0}- C_0 \quad \text{ for all } (x,q) \in \T^n \times \R^n, 
\]
where 
\begin{equation}
\label{conj}
\frac 1 m + \frac 1 {m'}=1.
\end{equation}

Under Assumptions \ref{A1}-\ref{A3}, Problem \ref{P1} has a unique Lipschitz solution (see \cite{A-T, LMT} and the references therein).
Furthermore, 
as established in  \cite[Theorem 1.1]{CGMT}, if 
Assumptions  \ref{A1}-\ref{A3} hold, there exist $(u_\infty,c)\in\Lip(\T^n)\times\R$ such that  
\begin{equation}
\label{uinfdef}
	u(\cdot,t)+ct\to u_\infty\quad\text{ uniformly on} \ \T^n \ 
	\text{as} \ t\to\infty.  
\end{equation}
Moreover, $(u_\infty,c)$ solves (in the viscosity sense) the following {\em ergodic problem}.
\begin{problem}	
\label{E}
Under the setting of Problem \ref{P1}, find $(v,c)\in\Lip(\T^n)\times\R$ such that 
	\begin{equation}
	\label{ee}
	 -a(x) \Delta v+H(x,Dv)=c \quad \text{ in }  \T^n. 
	\end{equation}
\end{problem}

The constant $c$ in Problem \ref{E}, the \textit{ergodic constant},
is uniquely determined by $H$ and $a$ under Assumptions \ref{A1}-\ref{A3} 
(see \cite[Theorem 4.5, Proposition 4.8]{LMT} for instance).   
However, because the operator on the left-hand side of \eqref{ee} is not strictly
monotone in $v$,
Problem \ref{E} may have multiple solutions as the examples in \cite[Chapter 6]{LMT} illustrate. 
Thus, a key question is which 
solution is selected by the large-time limit.
Here, we answer this question and give a representation formula for the limit function $u_\infty$ that highlights the  dependence on
the initial data $u_0$. 

\begin{remark}
\label{czero}
By adding a constant to $H$, we can assume that 
the ergodic constant in Problem \ref{E} vanishes, that is,  $c=0$. Therefore, we \textit{always} assume that $c  = 0$ henceforth. 

%\smallskip
%
%We also emphasize that the strict convexity of $H$ in Assumption \ref{A2} is essential to obtain the convergence result \eqref{uinfdef}.
%In other words, mere convexity of $H$ is not enough to guarantee \eqref{uinfdef}.
%Indeed, it is clear that $u(x,t)=\sin(2\pi(x+t))$ is a solution to 
%\[
%u_t+|u_x-1| -1 =0 \quad \text{ in } \T \times (0,\infty),
%\]
% and $u(\cdot,t)$ does not converge as $t \to \infty$.
%This example was pointed out in \cite{barles2000large}. 
\end{remark}

The selection problem, that is, to characterize which of the solutions 
of  Problem \ref{E} is selected by the limit $t\to \infty$, was first addressed
for first-order Hamilton--Jacobi equations in 
 \cite[Theorem 3.1]{MR2237158}. Moreover, the last two authors of the present paper gave an elementary proof of that selection result in \cite[Theorem 1.3]{MT2}.
 These results characterize $u_\infty$ in the first-order case. 
 If $a > 0$ on $\Tt^n$,   Problem \ref{E} has a unique solution (up to additive constants), 
 see, for example, \cite{Barles-Souganidis2, G}. 
 Thus, in this case, the selection problem is simple since
 only the additive constant needs to be determined. 
 However, prior to this work, 
 there was no characterization of the limit for the general case where $a$ can degenerate. 
 In this case, uniqueness for Problem \ref{E}  may not hold and
 the techniques for first-order equations that use deterministic trajectories do not apply.
The present paper closes a key gap in the understanding of
 the large-time limit of solutions of Problem \ref{P1} by establishing a representation result for $u_\infty$ in terms of 
 generalized Mather measures and the initial data.

\subsection{Main results on representation formulas and  limiting profiles}
 
A holonomic measure on $\Tt^n\times \Rr^n$ %for first-order Hamilton--Jacobi equations 
is a probability measure, $\mu\in\cP(\Tt^n\times \Rr^n)$,  
that satisfies
\[
\int_{\Tt^n\times \Rr^n} q\cdot D\varphi(z) \,d\mu(z, q)=0 
\]
for any $\varphi\in C^1(\Tt^n)$. 
In Aubry-Mather theory, the set of holonomic measures is of paramount importance
as these measures generalize closed curves.
These measures play a critical role in understanding Lagrangian dynamics and viscosity
solutions of first-order Hamilton-Jacobi equations.

Generalized Mather measures
were introduced in the setting of the stochastic Mather problem in \cite{G} 
and then further studied for fully nonlinear elliptic equations in \cite{G10}. 
Later, these measures were studied in different contexts and used to study asymptotic problems 
in \cite{CDG,MR2458239, MR3581314, MT2} and \cite{MR3670619}. 

%\smallskip
Here, we define a class of generalized holonomic
measures that are suitable for the study of time-dependent problems. 
Let $m$ be the exponent in Assumption \ref{A3} and $m'$ its conjugate exponent as in \eqref{conj}.
Fix $\zeta$ with $1<\zeta<m'$.
Let $t_0, t_1\in \Rr$ with $0\leq t_0<t_1$. 
Let $\cR(\Tt^n\times \Rr^n\times [t_0,t_1])$  be  the set of all Radon measures on $\Tt^n\times \Rr^n\times [t_0,t_1]$, and 
$\cR^+(\Tt^n\times \Rr^n\times [t_0,t_1])$  be  the set of all non-negative Radon measures on $\Tt^n\times \Rr^n\times [t_0,t_1]$.
Let $\cP(\Tt^n)$ and $\cP(\T^n\times \R^n)$ be the set of probability measures on $\Tt^n$ and $\T^n \times \R^n$, respectively.

For $\nu_0, \nu_1\in\cP(\T^n)$, $\cH(\nu_0,\nu_1;t_0,t_1)$ is the set of all 
	$\gamma\in \cR^+(\Tt^n\times \Rr^n\times [t_0,t_1])$
	 satisfying
\begin{equation*}
%\label{zetaint}
\int_{\T^n \times \R^n\times[t_0,t_1]} |q|^\zeta \,d\gam(z,q,s)<\infty
\end{equation*}
and	 	 
	\begin{align*}
	&\int_{\T^n \times \R^n\times[t_0,t_1]} 
	\left(\varphi_t(z,s)-a(z)\Delta \varphi(z,s)+q\cdot D\varphi(z,s) \right)
	\,d\gam(z,q,s) \nonumber \\
	=\, &\, 
	\int_{\T^n}\varphi(z,t_1)\,d\nu_1(z)
	-\int_{\T^n}\varphi(z,t_0)\,d\nu_0(z)  
%	\label{def:1}
	\end{align*}
	for all $\varphi\in C^2(\T^n\times[t_0,t_1])$. 
	Moreover, we set 
	\[
	\cH(\nu_1;t_0,t_1):=\bigcup_{\nu_0\in\cP(\T^n)} \cH (\nu_0,\nu_1;t_0,t_1). 
	\]
Fix any $\nu_1 \in\cP(\T^n)$. 
It is worth emphasizing that while we do not know that $\cH(\nu_0,\nu_1;t_0,t_1)\neq \emptyset$ for each $\nu_0\in\cP(\T^n)$, 
the set $\cH(\nu_1;t_0,t_1)$ is always non-empty as shown in  Corollary \ref{cor:non-empty}. 
For  $\gamma\in \cH(\nu_1;t_0,t_1)$, we denote by $\nu^{\gamma}$ the unique element in $\cP(\T^n)$ such that $\gamma\in \cH(\nu^{\gamma},\nu_1;t_0,t_1)$. 

In Section \ref{repsec}, we use the measures in  $\cH(\nu_1;t_0,t_1)$ to obtain the representation formula for the solutions of Problem \ref{P1}. This formula generalizes the optimal control representation for first-order Hamilton--Jacobi equations and the stochastic optimal control representation for second-order Hamilton--Jacobi equations.

\begin{teo}\label{thm:rep}
	Let $u$ solve Problem \ref{P1}.  Suppose that Assumptions  \ref{A1}-\ref{A3} hold.
	Then,
	for any $\nu\in\cP(\T^n)$ and $t>0$, we have
	\begin{equation*}
	%\label{eq:representation}
	\int_{\T^n}u(z,t)\,d\nu(z)=
	\inf_{\gam\in\cH(\nu;0,t)}
	\left[
	\int_{\T^n\times\R^n\times[0,t]}L(z,q)\,d\gam(z,q,s)+\int_{\T^n}u_0(z)\,d\nu^\gamma(z)
	\right].
	\end{equation*}
\end{teo}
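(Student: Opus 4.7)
The plan is to prove the identity by establishing both inequalities. The direction $\leq$ is a ``verification'' estimate, obtained by testing any admissible $\gam$ against a smooth approximation of $u$. The reverse $\geq$ will be proved by convex duality: the infimum over $\gam$ is dual to a supremum over smooth subsolutions of the HJB equation with initial data below $u_0$, and the comparison principle identifies this supremum with $\int u(\cdot,t)\,d\nu$.

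For the upper bound, I fix $\gam \in \cH(\nu^\gam,\nu;0,t)$ and regularize $u$ by vanishing viscosity: let $u^\ep$ solve $u^\ep_s - (a(x)+\ep)\Delta u^\ep + H(x,Du^\ep) = 0$ in $\T^n\times(0,t)$ with initial data $u_0$, so that $u^\ep \in C^{2,1}$ and $u^\ep \to u$ uniformly on $\T^n\times[0,t]$. The Legendre--Fenchel inequality $q\cdot Du^\ep - H(x,Du^\ep) \leq L(x,q)$ combined with the PDE gives the pointwise bound
\[u^\ep_s - a(x)\Delta u^\ep + q \cdot Du^\ep \leq L(x,q) + \ep\,\Delta u^\ep.\]
Using $\varphi = u^\ep$ as test function in the identity defining $\cH(\nu^\gam,\nu;0,t)$ then yields
\[\int u^\ep(\cdot,t)\,d\nu - \int u_0\,d\nu^\gam \leq \int L\,d\gam + \ep \int \Delta u^\ep\,d\gam.\]
Adjoint-type second-order estimates (in the spirit of \cite{CGMT}) together with the finite $\zeta$-moment of $\gam$ force the error term to vanish as $\ep \downarrow 0$; passing to the limit and then infimizing over $\gam$ delivers the $\leq$ inequality.

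For the lower bound, I pose the convex minimization problem with variables $(\gam,\mu_0) \in \cR^+(\T^n\times\R^n\times[0,t])\times\cP(\T^n)$, cost $\int L\,d\gam + \int u_0\,d\mu_0$, subject to the linear constraint
\[\int(\varphi_s - a\Delta\varphi + q\cdot D\varphi)\,d\gam + \int \varphi(\cdot,0)\,d\mu_0 = \int \varphi(\cdot,t)\,d\nu \quad \forall\, \varphi \in C^2(\T^n\times[0,t]).\]
Attaching $\varphi$ as a Lagrange multiplier and computing the pointwise infima over $(\gam,\mu_0)$---using $\sup_q[q\cdot D\varphi - L(x,q)] = H(x,D\varphi)$, together with the probability normalization of $\mu_0$ absorbed via an additive constant in $\varphi$---the dual reduces to
\[\sup\Bigl\{\int \varphi(\cdot,t)\,d\nu \ :\ \varphi\in C^2(\T^n\times[0,t]),\ \varphi_s - a\Delta\varphi + H(x,D\varphi) \leq 0,\ \varphi(\cdot,0) \leq u_0\Bigr\}.\]
By the comparison principle for Problem \ref{P1}, every such $\varphi$ satisfies $\varphi(\cdot,t) \leq u(\cdot,t)$, and the value $\int u(\cdot,t)\,d\nu$ is attained in the limit by the regularized subsolutions $u^\ep$. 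A Fenchel--Rockafellar-type strong duality theorem in a weighted Radon measure space thus equates the primal value with $\int u(\cdot,t)\,d\nu$, closing the circle.

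The main obstacle is the absence of a duality gap. It requires a careful choice of ambient spaces---measures with finite $\zeta$-moment on one side ($\zeta<m'$), $C^2$ test functions on the other---so as to accommodate the growth of $L$, yield weak-$*$ compactness of minimizing sequences via the coercivity $L(x,q)\geq |q|^{m'}/C_0 - C_0$, and satisfy a Slater-type constraint qualification. A secondary technical subtlety is the control of $\ep\int\Delta u^\ep\,d\gam$ in the upper bound; this parallels the nonlinear adjoint estimates underpinning the large-time analysis in \cite{CGMT}, and is precisely where the gap between $\zeta$ and $m'$ is exploited.
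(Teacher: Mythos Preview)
Your upper bound has a genuine gap. The error term $\ep\int\Delta u^\ep\,d\gam$ cannot be controlled for an \emph{arbitrary} $\gam\in\cH(\nu;0,t)$ by adjoint estimates: those estimates (as in \cite{CGMT}) bound quantities like $\ep\int|D^2u^\ep|^2\sigma^\ep$ where $\sigma^\ep$ is the \emph{specific} Fokker--Planck solution adapted to $u^\ep$, not a fixed $\gam$. From the equation one has $\ep\Delta u^\ep=\frac{\ep}{a+\ep}\bigl(u^\ep_t+H(x,Du^\ep)\bigr)$, so $|\ep\Delta u^\ep|\le C\ep/(a+\ep)$; this is uniformly bounded but does not tend to zero on $\{a=0\}$, and $\gam$ may charge that set (it need not be absolutely continuous with respect to Lebesgue measure, so a.e.\ convergence is useless). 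The finite $\zeta$-moment of $\gam$ concerns $|q|$ and gives no leverage on $\Delta u^\ep$. The paper avoids this entirely: it regularizes $u$ by mollification (Proposition~\ref{prop:approx A}), obtaining a $C^2$ function $u^\alpha$ satisfying $u^\alpha_t-a\Delta u^\alpha+H(x,Du^\alpha)\le C\alpha^{1/2}$, so the error is an explicit constant that vanishes with $\alpha$ independently of $\gam$.

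Your lower bound via Fenchel--Rockafellar is a different route from the paper, which is instead \emph{constructive}: it uses the nonlinear adjoint method (Lemmas~\ref{L1} and~\ref{lem:ineq1}) to build, from the vanishing-viscosity approximation $u^\ep$ and its adjoint $\sigma^\ep$, an explicit measure $\gam^\ep\in\cH^\ep$ achieving equality, then passes to the limit. Your duality approach is close in spirit to Theorem~\ref{thm:duality-h} (proved later for $h_t$), so it is not unreasonable, but your sketch has a second gap: you claim the dual supremum is approached by $u^\ep$, yet $u^\ep$ is \emph{not} a subsolution of $\varphi_t-a\Delta\varphi+H\le 0$ (it solves the $\ep$-perturbed equation), so it is not admissible in your dual. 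One would again need mollified approximate subsolutions and a careful handling of the constraint $\varphi(\cdot,0)\le u_0$, which you do not address. The paper's constructive proof sidesteps all of this and, as a bonus, exhibits a minimizer.
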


To understand the possible limits as $t\to\infty$ of the solutions of Problem \ref{P1}, 
we rely on a characterization of the solutions of Problem \ref{E} established in 
\cite{MT2}. This characterization uses generalized Mather measures. 
As in the time-dependent case, we begin by defining the set of stationary generalized holonomic measures.
Let
\begin{multline*}
\cH:=\Big\{\mu\in\cP(\T^n\times\R^n) \,:\,
\int_{\T^n \times \R^n} |q|^\zeta \,d\mu(z,q)<\infty,\\
\int_{\T^n\times\R^n}\left(-a(z)\Delta \varphi(z)+q\cdot D\varphi(z)\right)\,d\mu(z,q)=0 
\quad\text{ for all} \ \varphi\in C^2(\T^n)\Big\} . 
\end{multline*}
We consider the variational problem 
\begin{equation}\label{def:Mather1}
\inf_{\mu\in\cH}
\int_{\Tt^n\times \Rr^n}L(x,q)\,d\mu(x,q). 
\end{equation}
A {\em generalized Mather measure} is a solution of the minimization problem  in \eqref{def:Mather1}. We denote by
 $\widetilde{\cM}$ the set of all generalized Mather measures.  
Moreover, the set of all generalized projected Mather measures, 
$\cM$, is the set of all measures in  $\T^n$ 
which are the projections to $\T^n$ of generalized Mather measures; that is, 
$\nu \in \cM$ if there exists $\mu \in \widetilde{\cM}$ such that
for all $\varphi\in C(\Tt^n)$, 
\[
\int_{\Tt^n } \varphi(x)d \nu(x)=\int_{\Tt^n\times  \Rr^n } \varphi(x)d \mu(x,q).
\]

\begin{remark}
%\label{Rzero}	
Since we normalize that the ergodic constant to be $0$ as noted in Remark \ref{czero}, 
we have 
\begin{equation}\label{eq:erg-const}
\inf_{\mu\in\cH} \int_{\Tt^n\times \Rr^n} L(x,q)\,d\mu(x,q)=0,  
\end{equation}
%See, for example, \cite[Lemma 6.12]{LMT} for a proof. 
as shown in  \cite[Propositions 2.2 and 2.3]{MR3581314}.
See also Proposition \ref{prop:erg} in Appendix \ref{Appen:B} for the general diffusion matrix case. 
\end{remark}

Before stating our main results, we recall a uniqueness result for Problem \ref{E}. 
Let  $v_1$ and $v_2$ be two solutions to Problem \ref{E} with
	\[
	\int_{\T^n}v_1(x)\, d\nu=\int_{\T^n}v_2(x)\, d\nu
	\quad
	\text{ for all} \ \nu\in\cM. 
	\] 
Then, according to \cite[Theorem 1.1]{MT2}, we have 
\[
v_1= v_2 \qquad\text{on} \ \T^n.
\]
Therefore,
to characterize $u_\infty$ uniquely, 
 it  is enough to specify the integral of $u_\infty$ with respect to generalized projected Mather measures. 
This  characterization is given  in Theorem \ref{thm:profile} below. The corresponding
proof is presented 
in Section \ref{PBAS}.

\begin{teo}\label{thm:profile}
	Suppose that Assumptions  \ref{A1}-\ref{A3} hold.
	Let $u$ solve Problem \ref{P1} and
	$u_\infty$ be the large time limit of $u$ as in \eqref{uinfdef}.  
	Then, for any $\nu\in\cM$, we have
	\begin{equation}
	\label{maver}
	\int_{\T^n} u_\infty(z)
	 \,d\nu(z) = \inf_{\nu_0\in\cP(\T^n)} 
	\left[d(\nu_0,\nu) + \int_{\T^n} u_0(z)\,d\nu_0(z)  \right], 
	\end{equation}
where for $\nu_0, \nu_1 \in \cP(\T^n)$, $d(\nu_0,\nu_1)$ is 
the  {\em generalized Ma\~n\'e critical potential} between $\nu_0$ to $\nu_1$ given by
\begin{align}
d(\nu_0,\nu_1):=
\inf_{\substack{\gam\in \cH(\nu_0,\nu_1;0,t)\\ t>0}}  
\int_{\T^n \times \R^n \times [0,t]} L(x,q) \, d\gam(x,q,s).
\label{def:d}
\end{align}
\end{teo}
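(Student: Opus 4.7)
The plan is to pass to the limit $t \to \infty$ in the representation formula of Theorem~\ref{thm:rep}, using two opposite constructions for the two inequalities, and exploiting the uniform convergence $u(\cdot,t) \to u_\infty$ from \eqref{uinfdef} (valid with $c=0$ by Remark~\ref{czero}).

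For the ``$\geq$'' direction, I would apply Theorem~\ref{thm:rep} and bound each summand inside the infimum from below using the very definition \eqref{def:d} of the generalized Ma\~n\'e potential: for any $\gamma\in\cH(\nu;0,t)$ we have $\int L\,d\gamma\ge d(\nu^\gamma,\nu)$, so the right-hand side of the representation is at least $\inf_{\nu_0\in\cP(\T^n)}\bigl[d(\nu_0,\nu)+\int u_0\,d\nu_0\bigr]$, a bound independent of $t$. Letting $t\to\infty$ delivers one half of \eqref{maver}.

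For the reverse inequality, the key idea is to introduce a \emph{zero-cost holding measure} at $\nu\in\cM$. Given $\nu_0\in\cP(\T^n)$ with $d(\nu_0,\nu)<\infty$ (otherwise the bound is trivial) and $\varepsilon>0$, I would choose $t_*>0$ and $\gamma_*\in\cH(\nu_0,\nu;0,t_*)$ with $\int L\,d\gamma_*<d(\nu_0,\nu)+\varepsilon$. Because $\nu\in\cM$, there is a generalized Mather measure $\mu^\nu\in\widetilde{\cM}$ whose $\T^n$-projection is $\nu$; a Fubini-type computation, testing against $\varphi(z,s)$ and applying the defining identity of $\cH$ to $\mu^\nu$ for each fixed $s$, shows that for every $t>t_*$ the product $\mu^\nu\otimes ds$ restricted to $[t_*,t]$ lies in $\cH(\nu,\nu;t_*,t)$ with Lagrangian cost $(t-t_*)\int L\,d\mu^\nu=0$ by \eqref{eq:erg-const}. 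Gluing $\gamma_*$ on $[0,t_*]$ to $\mu^\nu\otimes ds$ on $[t_*,t]$ telescopes the two integration-by-parts identities at the intermediate time $t_*$ and produces $\gamma\in\cH(\nu_0,\nu;0,t)\subset\cH(\nu;0,t)$ with $\int L\,d\gamma<d(\nu_0,\nu)+\varepsilon$. Plugging $\gamma$ into Theorem~\ref{thm:rep}, then sending $t\to\infty$, $\varepsilon\to 0$, and taking the infimum over $\nu_0$ completes the reverse inequality.

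The main obstacle will be the gluing step: it crucially requires $\nu\in\cM$ in order to produce a zero-cost stationary-in-time measure with which to pad $\gamma_*$ up to the arbitrarily large final time $t$. Without that, the finite-horizon representation of Theorem~\ref{thm:rep} could not be cleanly connected to the infinite-horizon quantity $d(\nu_0,\nu)$, which is consistent with the uniqueness result recalled before the statement: generalized Mather measures are exactly what is needed to pin down $u_\infty$. The verifications that the glued measure meets the moment bound $\int|q|^\zeta\,d\gamma<\infty$ (inherited from $\gamma_*$ and $\mu^\nu$) and satisfies the correct integration-by-parts identity are routine computations, but they should be written out explicitly.
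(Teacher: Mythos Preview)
Your proof is correct and uses the same core ingredients as the paper: the representation formula of Theorem~\ref{thm:rep}, the product construction $d\gamma=d\mu\,ds$ for a Mather measure $\mu$ (which is Lemma~\ref{lem:Mather-holonomic}), the concatenation of holonomic measures (Lemma~\ref{lem:connection}), and the identity \eqref{eq:erg-const}. The organization, however, differs. The paper first introduces the generalized Peierls barrier $h(\nu_0,\nu_1)=\liminf_{t\to\infty}h_t(\nu_0,\nu_1)$ and the generalized projected Aubry set $\cA=\{\nu:h(\nu,\nu)=0\}$, proves $\cM\subset\cA$ (Proposition~\ref{prop:A-M}) via exactly your product-measure construction, and then establishes the formula \eqref{maver} for all $\nu\in\cA$ (Theorem~\ref{thm:profile-A}); in that proof the ``holding'' step uses an abstract sequence $\gamma_k\in\cH(\nu,\nu;0,t_k)$ with $\int L\,d\gamma_k\to 0$, and the semigroup property is invoked (applying Theorem~\ref{thm:rep} to $u(\cdot,\cdot+T)$) instead of explicitly gluing measures. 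Your argument collapses this into a single direct step by using the Mather measure itself as the zero-cost holding measure and gluing explicitly. What the paper's detour buys is the strictly stronger Theorem~\ref{thm:profile-A}, valid on the larger set $\cA$; what your route buys is a shorter, self-contained proof of the stated theorem that avoids introducing $h$ and $\cA$ altogether.
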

%
%
%Here, $\cH(\nu_0,\nu_1;0,t)$ is a family of nonnegative Radon measures defined 
%on $\T^n\times\R^n\times[0,t]$, which is defined by Definition {\rm\ref{def:H}}.  
%

We also notice that Theorem \ref{thm:profile} answers the open question addressed in \cite[Section 5.7, (iv)]{LMT}. 
We refer to \cite{CGMT, MR3581314, MT2, LMT} for earlier developments. 

\begin{remark}
The assumption that the initial data, $u_0$,  is $C^2$ is not critical as by approximation, we can solve Problem \ref{P1} with continuous initial data, and prove the large time convergence in
 \eqref{uinfdef}. 
The regularity of $a$ and $H$ is more delicate as several estimates require quantitative bounds for their derivatives. 
%These bounds are detailed in the next two assumptions. 
\end{remark}

\begin{remark}\label{rem:general case}
We choose to consider \eqref{eq:1} with a specific diffusion matrix $A(x)=a(x)I_n$ for $x\in \T^n$, 
where $I_n$ in the identity matrix of size $n$, to simplify the presentation. However,
the results in Theorems \ref{thm:rep}--\ref{thm:profile} can be generalized
for more general diffusion matrices. The precise statements and  proofs are 
more delicate and are discussed in 
Theorems \ref{thm:rep B} and \ref{thm:profileB} in Appendix \ref{Appen:B}. 
\end{remark}

\subsection{Duality results}
For $t \ge 0$, let $S_t:C(\T^n)\to C(\T^n)$  be the solution operator for Problem \ref{P1}; that is, 
$S_t u_0(\cdot)=u(\cdot, t)$, where $u$ solves Problem \ref{P1} with initial data $u_0$. 
We assume the following compactness property for $S_t$.

\begin{hyp}
\label{A4} 
Let $X_{0}:=\{u_0\in C(\Tt^n) \,:\, \min_{\T^n}u_0=0 \}$. 
For any $t>0$,  $S_t(X_{0})$ is compact in $C(\Tt^n)$.

\end{hyp}

The map $S_t$ often enjoys the compactness
in the preceding assumption. 
Two important cases are examined in 
Theorem \ref{thm:A4} below. 
Under Assumption \ref{A4}, we have the following remarkable 
uniform convergence property that is proved at the end of Section 
\ref{PBAS}. 

\begin{proposition}
	\label{conv}
	Consider the setting of Problem \ref{P1} and
	suppose that Assumptions  \ref{A1}-\ref{A4} hold. 
	Given $\ep>0$, there exists $T>0$ such that
	for any initial data $u_0\in C^2(\Tt^n)$, 
	the corresponding solution, $u$, to Problem \ref{P1}
	satisfies
	\[
	\|u(\cdot, t) -u_\infty\|_{L^\infty(\T^n)}<\ep
	\]
	for all $t>T$, 
	where
	\[
	u_\infty(x)=\lim_{t\to \infty} u(x, t) \quad\text{ for } x\in \Tt^n. 
	\]
\end{proposition}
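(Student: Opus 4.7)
The plan is to reduce the uniform statement to a compactness-plus-contraction argument on the compact set $K := S_{t_0}(X_{0})$ for some fixed $t_0>0$ supplied by Assumption \ref{A4}, and then transfer the known pointwise large-time convergence on $K$ to convergence that is uniform in the initial data.

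First, I would record two standard properties of the solution semigroup, both following from the comparison principle for Problem \ref{P1}: (i) $S_t(u_0+c) = S_tu_0 + c$ for every constant $c\in\R$, and (ii) the $L^\infty$-contraction
\[
\|S_tu_0 - S_tv_0\|_{L^\infty(\T^n)} \leq \|u_0-v_0\|_{L^\infty(\T^n)}.
\]
Property (i), combined with the fact that shifting $u_0$ by a constant shifts $u_\infty$ by the same constant, lets me assume without loss of generality that $u_0 \in X_{0}$. Passing to the limit $t\to\infty$ in (ii) shows that the assignment $u_0 \mapsto u_\infty$ is itself nonexpansive in $L^\infty$, and, in particular, continuous on $X_{0}$.

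Next, fix $t_0>0$ and set $K := S_{t_0}(X_{0})$. By Assumption \ref{A4}, $K$ is compact in $C(\T^n)$. For each $v\in K$, the convergence \eqref{uinfdef}, which extends from $C^2$ to continuous initial data by the approximation argument sketched in the remark after Theorem \ref{thm:profile} together with (ii), yields some $T(v)>0$ with
\[
\|S_tv - u_\infty(v)\|_{L^\infty(\T^n)} < \ep/2 \quad\text{for all } t>T(v),
\]
where I write $u_\infty(v)$ to emphasize the dependence on the initial datum. For any $w\in K$ with $\|w-v\|_\infty<\ep/4$ and any $t>T(v)$, the triangle inequality together with (ii) and the continuity of $u_0\mapsto u_\infty$ gives
\[
\|S_tw - u_\infty(w)\|_\infty \leq \|S_tw-S_tv\|_\infty + \|S_tv-u_\infty(v)\|_\infty + \|u_\infty(v)-u_\infty(w)\|_\infty < \ep.
\]
By compactness of $K$, cover it with finitely many $\ep/4$-balls centered at $v_1,\dots,v_N$ and set $T := t_0 + \max_{1\leq i\leq N}T(v_i)$. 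Then for any $u_0\in X_{0}$, the element $S_{t_0}u_0\in K$ lies in one of these balls, and applying the semigroup identity $S_tu_0 = S_{t-t_0}(S_{t_0}u_0)$ together with the bound above yields $\|S_tu_0-u_\infty\|_\infty<\ep$ for all $t>T$.

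There is no deep conceptual obstacle; the argument is the standard conversion of pointwise convergence to uniform convergence via compactness. The main points to verify carefully are the $L^\infty$-contraction of $S_t$ (classical for convex Hamilton--Jacobi--Bellman equations under Assumptions \ref{A1}--\ref{A3}) and the observation that Assumption \ref{A4} provides compactness on all of $X_{0}$, which is what allows the finite cover to be chosen independently of the particular $C^2$ initial datum under consideration.
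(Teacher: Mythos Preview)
Your argument is correct and rests on exactly the same ingredients as the paper's proof: the reduction to $X_0$ via translation invariance, the compactness of $S_{t_0}(X_0)$ from Assumption~\ref{A4}, the $L^\infty$-contraction of $S_t$, and the resulting continuity of $u_0\mapsto u_\infty$. The only difference is stylistic: the paper argues by contradiction, extracting a convergent subsequence of $u^k(\cdot,1)$ and deriving a contradiction with the large-time convergence of the limit, whereas you give the direct finite-cover version of the same compactness argument.
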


The study of Hamilton--Jacobi--Bellman equations using duality theory can be traced
back to \cite{LewisVinter, FlemingVermes1, FlemingVermes2}.
In the context of Aubry-Mather theory, the elliptic case 
was considered in \cite{G}, the fully nonlinear case in 
\cite{G10}, and the parabolic case in \cite{MR2883292}. 
In a sequence of works \cite{Mikami, Mikami2, Mikami-T}, 
various duality theorems associated with Hamilton--Jacobi--Bellman equations \eqref{eq:1}
were studied in the  non-degenerate case.  The connection between Aubry-Mather theory and
optimal transport was explored in \cite{Bernard-Buffoni}.
We also refer to \cite{DGKP} for recent work concerning stochastic optimal transport.

For $t>0$, let $h_t:\cP(\T^n)\times\cP(\T^n)\to\R\cup\{+\infty\}$ be 
\begin{align*}
h_t(\nu_0,\nu_1)=
\left\{
\begin{array}{ll}
\displaystyle
\inf_{\gam\in\cH(\nu_0,\nu_1;0,t)}
\int_{\T^n\times\R^n\times[0,t]}L(x,q)\,d\gam 
& 
\quad \text{ if} \ \cH(\nu_0,\nu_1;0,t)\not=\emptyset, \\ 
+\infty & 
\quad \text{ if} \ \cH(\nu_0,\nu_1;0,t)=\emptyset
\end{array}
\right.  
%\label{func:ht}
\end{align*}
for $\nu_0, \nu_1\in \cP(\Tt^n)$.
In Section \ref{rdualty}, we apply the Legendre-Fenchel-Rockafellar theorem  
to obtain representation formulas for $h_t$ and $d$.  

\begin{teo}\label{thm:duality-h}
	Consider the setting of Problem \ref{P1} and
	suppose that Assumptions  \ref{A1}-\ref{A3} hold.	
	Suppose that $h_t(\nu_0,\nu_1)<+\infty$ for  given $t>0$ and $\nu_0, \nu_1\in\cP(\T^n)$. 
	Then,
	\begin{equation}\label{eq:dual-ht}
	h_t(\nu_0,\nu_1)
	=\sup_{w\in\cE} \left[
	\int_{\T^n}w(z,t)\,d\nu_1(z)
	-
	\int_{\T^n}w(z,0)\,d\nu_0(z)\right],
	\end{equation}
where
\[
\cE:=\{w\in\Lip(\T^n\times[0,t]) \,:\, w \ \text{is a viscosity solution to \eqref{eq:1}}\}. 
\]
\end{teo}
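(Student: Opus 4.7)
For the easy direction $h_t(\nu_0,\nu_1) \geq \sup_{w\in\cE}[\,\cdot\,]$, my approach is to use Theorem \ref{thm:rep} as a black box. Fix $w\in\cE$ and $\gam \in \cH(\nu_0,\nu_1;0,t)$. Then $\gam \in \cH(\nu_1;0,t)$ with $\nu^\gam = \nu_0$, and the representation formula of Theorem \ref{thm:rep}, applied to $w$ with initial datum $w(\cdot,0)$ (extended to continuous initial data, as allowed by the remark in the introduction), gives
$$\int_{\T^n} w(z,t)\,d\nu_1(z) \leq \int_{\T^n\times\R^n\times[0,t]}L(z,q)\,d\gam + \int_{\T^n} w(z,0)\,d\nu_0(z).$$
Rearranging and taking $\inf_{\gam}$ then $\sup_{w}$ yields the desired inequality.

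For the reverse direction $h_t(\nu_0,\nu_1) \leq \sup_{w\in\cE}[\,\cdot\,]$, I plan to apply the Legendre--Fenchel--Rockafellar theorem first to obtain the identity with $\cE$ replaced by the set $\cE_{\textup{sub}}$ of $C^2$ classical subsolutions of \eqref{eq:1} on $\T^n \times [0,t]$. The primal problem $\inf_\gam \int L\,d\gam$ over $\gam \in \cH(\nu_0,\nu_1;0,t)$ is linear in $\gam$ with a linear constraint encoded by the operator $A\varphi(x,q,s) = \varphi_s - a(x)\Delta\varphi + q\cdot D\varphi$ acting on $\varphi \in C^2(\T^n\times[0,t])$. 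The Fenchel dual constraint $L(x,q) - A\varphi(x,q,s) \geq 0$ for all $(x,q,s)$ is precisely the classical subsolution inequality $\varphi_s - a\Delta\varphi + H(x,D\varphi) \leq 0$, obtained by taking the supremum in $q$. This is the heart of the proof; the main technical obstacle is to rule out a duality gap by verifying the qualification conditions of the Fenchel--Rockafellar theorem in a suitable Banach space pairing of signed Radon measures with finite $\zeta$-moment against continuous functions of matching polynomial growth. Here the hypothesis $h_t(\nu_0,\nu_1) < +\infty$ supplies primal feasibility, while the superlinear growth of $L$ together with $\zeta<m'$ yields tightness and lower semicontinuity of minimizing sequences.

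Finally, I would upgrade from $\cE_{\textup{sub}}$ to $\cE$ via comparison. Given $\varphi \in \cE_{\textup{sub}}$, let $w\in\cE$ be the unique viscosity solution of \eqref{eq:1} on $\T^n\times[0,t]$ with initial datum $w(\cdot,0) = \varphi(\cdot,0)$. Since $\varphi$ is a classical subsolution and $w$ is a (viscosity) supersolution with the same initial data, the comparison principle yields $\varphi \leq w$ on $\T^n\times[0,t]$. In particular $\int\varphi(z,t)\,d\nu_1 \leq \int w(z,t)\,d\nu_1$ as $\nu_1\geq 0$, while $\int\varphi(z,0)\,d\nu_0 = \int w(z,0)\,d\nu_0$, so the functional in \eqref{eq:dual-ht} is only enlarged by passing to $\cE$. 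Combining this with the duality identity for $\cE_{\textup{sub}}$ completes the proof.
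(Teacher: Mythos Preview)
Your proposal is correct and matches the paper's strategy: Fenchel--Rockafellar duality reduces $h_t$ to a supremum over $C^2$ test functions (the paper's Theorem~\ref{t1} and the proposition that follows it), the shift $\varphi\mapsto\varphi-Cs$ restricts attention to classical subsolutions without loss, and comparison with the viscosity solution sharing the same initial data upgrades each subsolution to an element of $\cE$. For the inequality $h_t\geq\sup_{w\in\cE}[\cdot]$ you invoke Theorem~\ref{thm:rep} as a black box, whereas the paper plugs the mollification $w^\alpha$ of Proposition~\ref{prop:approx A} directly into the preliminary duality formula; since the relevant half of Theorem~\ref{thm:rep} (Lemma~\ref{VL}) is itself proved via exactly this mollification, the two routes coincide at the level of ideas.
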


The preceding result extends the duality formula derived in \cite{OttoVillani}
for the representation of the $2$-Wasserstein distance using the Benamou-Brenier dynamic formulation 
of optimal transport \cite{BB}.
 In the non-degenerate setting, a version of 
 Theorem \ref{thm:duality-h} was proven in \cite{Mikami, Mikami2, Mikami-T}.
For first-order equations, see \cite{Bernard-Buffoni}. 
The degenerate setting addressed here is substantially more complex, 
as the proof of Theorem \ref{thm:duality-h} requires a
delicate approximation argument.

Under the compactness assumption, Assumption \ref{A4}, we  obtain the following duality result for $d$ restricted to the projected Mather set, $\Mm$. 
\begin{teo}\label{thm:duality-d}
Consider the setting of Problem \ref{P1} and
suppose that Assumptions  \ref{A1}-\ref{A4} hold. 	
Let $\nu_0, \nu_1\in \Mm$ and suppose that 
$d(\nu_0,\nu_1)<\infty$.
Then,  
\[
d(\nu_0,\nu_1)= \sup_{w\in\cS}\left[
\int_{\T^n}w(z) \,d\nu_1(z)
-
\int_{\T^n}w(z) \,d\nu_0(z)\right],
\]	
where
\begin{equation}\label{def:Ss}
\cS:=\{w\in\Lip(\T^n) \,:\, w \ \text{is a viscosity solution to \eqref{ee}}\}. 
\end{equation}

%
%	Then, 
%	\begin{equation}\label{eq:dual-d}
%	d(\nu_0,\nu_1)
%	=\sup\left[
%	\int_{\T^d}w\,d\nu_1
%	-
%	\int_{\T^d}w\,d\nu_0\right], 
%	\end{equation}
%	where the supremum is taken over all viscosity solutions $w$ of 
%Problem \ref{E}. 
\end{teo}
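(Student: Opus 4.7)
The two inequalities should be proved separately. The easy direction, $d(\nu_0,\nu_1)\ge \sup_{w\in\cS}\bigl[\int_{\T^n}w\,d\nu_1-\int_{\T^n}w\,d\nu_0\bigr]$, comes essentially for free from Theorem \ref{thm:duality-h}: any stationary $w\in\cS$ gives, via $(z,s)\mapsto w(z)$, an element of $\cE$, so whenever $h_t(\nu_0,\nu_1)<\infty$,
\[
h_t(\nu_0,\nu_1)\ge \int_{\T^n}w\,d\nu_1-\int_{\T^n}w\,d\nu_0.
\]
Taking the infimum in $t$ (finite since $d(\nu_0,\nu_1)<\infty$) and then the supremum over $w\in\cS$ yields this half.

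For the reverse direction, I would first establish the key identity $d(\nu_0,\nu_0)=0$ for $\nu_0\in\cM$. The lower bound $\ge 0$ is the easy direction applied with $\nu_1=\nu_0$. For the matching upper bound, pick a generalized Mather measure $\mu\in\widetilde{\cM}$ with projection $\nu_0$ and set $\gam:=\mu\otimes ds$ on $\T^n\times\R^n\times[0,t]$. Testing with $\varphi\in C^2(\T^n\times[0,t])$ shows $\gam\in\cH(\nu_0,\nu_0;0,t)$: the $\varphi_s$-term gives $\int(\varphi(\cdot,t)-\varphi(\cdot,0))\,d\nu_0$, while the $(-a\Delta\varphi+q\cdot D\varphi)$-terms vanish slice-by-slice because $\mu\in\cH$. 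Then \eqref{eq:erg-const} forces $\int L\,d\gam=t\int L\,d\mu=0$, so $d(\nu_0,\nu_0)\le 0$. A standard concatenation of elements of $\cH$ built on this construction delivers the monotonicity $h_{s+\tau}(\nu_0,\nu_1)\le h_s(\nu_0,\nu_1)$, and consequently $d(\nu_0,\nu_1)=\lim_{t\to\infty}h_t(\nu_0,\nu_1)$.

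Now I would combine Theorem \ref{thm:duality-h} with Proposition \ref{conv}. Fix $\ep>0$ and let $T_\ep$ be the uniform convergence time produced by Proposition \ref{conv}. Choose $t>T_\ep$ with $h_t(\nu_0,\nu_1)<d(\nu_0,\nu_1)+\ep$, and use Theorem \ref{thm:duality-h} to find $w\in\cE$ with
\[
\int_{\T^n}w(\cdot,t)\,d\nu_1-\int_{\T^n}w(\cdot,0)\,d\nu_0>d(\nu_0,\nu_1)-2\ep,
\]
after normalizing so that $\min w(\cdot,0)=0$, which does not change the left-hand side since $\nu_0,\nu_1$ are probability measures. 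Extend $w$ to $\T^n\times[0,\infty)$ by solving Problem \ref{P1} from $w(\cdot,0)$; by uniqueness this agrees with the original $w$ on $[0,t]$, and the large-time limit $w_\infty\in\cS$ exists. Proposition \ref{conv} yields $\|w(\cdot,t)-w_\infty\|_{L^\infty(\T^n)}<\ep$, and Theorem \ref{thm:profile} applied with initial datum $w(\cdot,0)$ (valid for continuous data by the remark following Theorem \ref{thm:profile}), together with $d(\nu_0,\nu_0)=0$ and the choice $\nu'=\nu_0$ in \eqref{maver}, gives $\int_{\T^n}w_\infty\,d\nu_0\le \int_{\T^n}w(\cdot,0)\,d\nu_0$. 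Chaining these bounds,
\[
\int_{\T^n}w_\infty\,d\nu_1-\int_{\T^n}w_\infty\,d\nu_0>d(\nu_0,\nu_1)-3\ep,
\]
and letting $\ep\to 0$ closes the inequality since $w_\infty\in\cS$.

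The main obstacle is precisely this last reduction from the time-dependent near-optimizer $w\in\cE$ to the stationary $w_\infty\in\cS$. Two ingredients make it go through: the identity $d(\nu_0,\nu_0)=0$ on $\cM$, which via Theorem \ref{thm:profile} forces $\int w_\infty\,d\nu_0\le\int w(\cdot,0)\,d\nu_0$ and thus converts the duality at times $0$ and $t$ into a duality at infinity; and the uniform long-time convergence of Proposition \ref{conv}, which controls $w(\cdot,t)-w_\infty$ at the a priori arbitrary duality time $t$. Absent the uniformity—hence absent Assumption \ref{A4}—one could not align these two time scales, and the approximation would collapse.
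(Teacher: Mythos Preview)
Your proof is correct and follows essentially the same route as the paper. Both directions match: the easy inequality uses that stationary solutions $w\in\cS$ are admissible in the duality formula of Theorem~\ref{thm:duality-h}, and the hard inequality combines a near-optimizer $w\in\cE$ from Theorem~\ref{thm:duality-h} with the uniform convergence of Proposition~\ref{conv} to pass to $w_\infty\in\cS$.

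The one substantive variation is how you obtain $\int_{\T^n}w_\infty\,d\nu_0\le\int_{\T^n}w(\cdot,0)\,d\nu_0$. The paper isolates this as a separate lemma (Lemma~\ref{monlem}), proving directly from the representation formula that $t\mapsto\int_{\T^n}w(\cdot,t)\,d\nu_0$ is non-increasing whenever $\nu_0\in\cM$: one simply uses the Mather measure $\mu$ (with $\proj_{\T^n}\mu=\nu_0$) tensored with $ds$ as a competitor in Theorem~\ref{thm:rep}. You instead route through Theorem~\ref{thm:profile} together with the identity $d(\nu_0,\nu_0)=0$, which yields the same inequality in the limit $t\to\infty$. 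The two arguments are equivalent in content---both rest on the same Mather-measure competitor---and the paper's version is marginally more direct since it avoids invoking the full asymptotic profile theorem. On the other hand, you are more explicit than the paper about the monotonicity $h_{s+\tau}(\nu_0,\nu_1)\le h_s(\nu_0,\nu_1)$ for $\nu_0\in\cM$, which is what justifies the existence of a sequence $t_j\to\infty$ with $h_{t_j}(\nu_0,\nu_1)\to d(\nu_0,\nu_1)$; the paper uses this sequence without comment.
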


Finally, we discuss the compactness assumption, Assumption \ref{A4}.

\begin{teo}\label{thm:A4}
Consider the setting of Problem \ref{P1} and
suppose that Assumptions  \ref{A1}-\ref{A3} hold. 
Then Assumption \ref{A4} holds if $a \equiv 0$ or $m>2$.
\end{teo}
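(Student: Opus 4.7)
The plan is to verify the two Arzel\`a--Ascoli hypotheses for $S_t(X_0)$ at each fixed $t>0$: uniform boundedness in $L^\infty(\T^n)$ and uniform equicontinuity. The uniform $L^\infty$ bound will follow from the representation formula of Theorem \ref{thm:rep} in both regimes, while equicontinuity requires a case-specific regularity argument, using semiconcavity when $a\equiv 0$ and a Bernstein-type gradient estimate when $m>2$.

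For the $L^\infty$ bound, let $u_0\in X_0$ and pick $y\in\T^n$ with $u_0(y)=0$. Since $u_0\geq 0$ and $L\geq -C_0$, Theorem \ref{thm:rep} applied with $\nu=\delta_x$ yields $u(x,t)\geq -C_0 t$ immediately. For the matching upper bound it suffices to exhibit one admissible $\gam\in\cH(\delta_x;0,t)$ with controlled total cost: when $a\equiv 0$ one takes $\gam$ concentrated on the straight-line path from $y$ to $x$ of constant velocity $(x-y)/t$, so that $\nu^\gam=\delta_y$, hence $\int u_0\,d\nu^\gam=0$ and $\int L\,d\gam \leq C(t)$ by Assumption \ref{A3}; when $a$ may fail to vanish, one superimposes a small diffusive correction that preserves the terminal marginal while keeping the Lagrangian cost of order $C(t)$. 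Hence $\|u(\cdot,t)\|_{L^\infty(\T^n)}\leq C(t)$ uniformly over $u_0\in X_0$.

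For equicontinuity when $a\equiv 0$, I would invoke the classical semiconcavity regularization of the first-order Lax--Oleinik semigroup: the strict convexity of $L$ in $q$ together with Assumption \ref{A3} implies that $u(\cdot,t)$ is semiconcave on $\T^n$ with a semiconcavity constant $K(t)$ depending only on $t$ and the structural constants (see e.g.\ Cannarsa--Sinestrari). Combined with the uniform $L^\infty$ bound of the previous step, semiconcavity yields a uniform Lipschitz bound on $u(\cdot,t)$, since a semiconcave function on the compact manifold $\T^n$ with bounded $L^\infty$ norm is Lipschitz with a constant controlled by its $L^\infty$ norm and its semiconcavity constant through a standard superdifferential argument.

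For equicontinuity when $m>2$, the plan is a Bernstein-type gradient estimate. One mollifies the initial data to $u_0^\delta$ and regularizes the diffusion to $a+\ep$ to obtain smooth solutions $u^{\ep,\delta}$; setting $w:=|Du^{\ep,\delta}|^2$ and differentiating the equation yields
\[
w_t-(a+\ep)\Delta w+D_pH\cdot Dw+2(a+\ep)|D^2u^{\ep,\delta}|^2=2(Da\cdot Du^{\ep,\delta})\Delta u^{\ep,\delta}-2D_xH\cdot Du^{\ep,\delta}.
\]
Applying the parabolic maximum principle to a multiplier of the form $(t\wedge 1)\,w$, using Cauchy--Schwarz to absorb the mixing term into the positive Hessian contribution $2(a+\ep)|D^2u^{\ep,\delta}|^2$, and exploiting the superquadratic growth of $H$ to dominate the remaining lower-order terms, one obtains a bound of the form $\|Du^{\ep,\delta}(\cdot,t)\|_{L^\infty}\leq C\bigl(t,\|u^{\ep,\delta}(\cdot,t/2)\|_{L^\infty}\bigr)$ uniform in $\ep,\delta$. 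Passing $\ep,\delta\downarrow 0$ and invoking the $L^\infty$ bound at time $t/2$ then gives a uniform Lipschitz estimate on $u(\cdot,t)$, and Arzel\`a--Ascoli provides compactness of $S_t(X_0)$. The main obstacle is precisely this Bernstein estimate: the hypothesis $m>2$ is essential, since it is exactly what permits absorption of the various lower-order terms produced by $D_xH\cdot Du$, $(Da\cdot Du)\Delta u$, and the Hessian mixing by the convex coercive contribution of $H$; in the subcritical regime the same scheme fails and compactness of $S_t(X_0)$ may genuinely break down.
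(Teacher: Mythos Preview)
Your treatment of the case $a\equiv 0$ is reasonable: the representation formula with the straight-line path gives the uniform $L^\infty$ bound, and semiconcavity of the Lax--Oleinik semigroup gives equicontinuity. This is essentially what the paper means when it says ``the compactness of $S_1$ follows from the coercivity of the Hamiltonian.''

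In the case $m>2$ with $a\not\equiv 0$, however, both halves of your argument have genuine gaps.

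\textbf{The $L^\infty$ upper bound.} Your ``small diffusive correction'' does not work. If $\gamma$ is the line measure along $z(s)=y+s(x-y)/t$, a direct computation gives
\[
\int\big(\varphi_t - a\Delta\varphi + q\cdot D\varphi\big)\,d\gamma
= \varphi(x,t)-\varphi(y,0) - \int_0^t a(z(s))\,\Delta\varphi(z(s),s)\,ds,
\]
so $\gamma\notin\cH(\delta_y,\delta_x;0,t)$ unless $a$ vanishes along the segment. Any correction that restores the holonomy constraint will, because of the diffusion term, spread the initial marginal $\nu^\gamma$ away from $\delta_y$; but $u_0\in X_0$ can be arbitrarily large away from $y$, so $\int u_0\,d\nu^\gamma$ is then uncontrolled. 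The paper bypasses this entirely by comparison with an explicit barrier supersolution
\[
\psi(x,t)=\lambda\, t^{-1/(m-1)}\big(|x|^2+\lambda t\big)^{m'/2}+Ct,
\]
borrowed from Cardaliaguet--Silvestre, which satisfies $\psi(0,0)=0$ and $\psi(x,0)=+\infty$ for $x\neq 0$, hence dominates every $u_0\in X_0$ with minimum at the origin. Verifying that $\psi$ is a strict supersolution is exactly where $m>2$ enters, via the inequality $\frac{m'}{2}>\frac{1}{m-1}$.

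\textbf{The equicontinuity step.} Your Bernstein computation, at an interior maximum of $(t\wedge 1)|Du^{\ep,\delta}|^2$, yields an inequality of the shape $|Du|^2/t \leq C(1+|Du|^2+|Du|^{m+1})$, which gives no bound on $|Du|$: the right-hand side dominates for large $|Du|$ since $m>1$. Extracting a regularization effect that depends only on $\|u\|_{L^\infty}$ (and not on $\|Du_0\|_{L^\infty}$) from a Bernstein-type argument in the degenerate setting requires substantially more structure than your sketch provides. Indeed, the available regularity theory for degenerate superquadratic equations (Capuzzo Dolcetta--Leoni--Porretta, Cardaliaguet--Silvestre) yields only H\"older, not Lipschitz, estimates. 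The paper accordingly combines the $L^\infty$ bound at time $\tfrac12$ with \cite[Theorem~1.2]{cardaliaguetsilvestre} to obtain a uniform $C^{0,\alpha}$ bound at time $1$, which suffices for Arzel\`a--Ascoli.
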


\medskip
The paper is organized as follows. Section \ref{repsec} is devoted to the proof of Theorem \ref{thm:rep}. In Section \ref{S3}, we discuss the properties
of generalized holonomic measures.
Next, in Section \ref{PBAS}, 
we introduce 
the generalized Peierls barrier and Aubry set and  prove Theorem \ref{thm:profile}. 
Section \ref{rdualty} is devoted to the proof of Theorems \ref{thm:duality-h} and \ref{thm:duality-d}. 
In Section \ref{sec:A4}, we discuss Assumption \ref{A4}, and give the proof of Theorem \ref{thm:A4}. 
In Appendix \ref{Appen:A}, we discuss the approximation of viscosity solutions of \eqref{eq:1} by $C^2$-subsolutions of an approximate equation. 
Finally, in Appendix \ref{Appen:B}, we consider the Hamilton--Jacobi--Bellman equation with a general diffusion matrix, and generalize Theorems \ref{thm:rep} and \ref{thm:profile} for this case.

\bigskip
\noindent {\bf Acknowledgements.} 
We would like to thank Hitoshi Ishii for his suggestions on the approximations of viscosity solutions and subsolutions in Appendix \ref{Appen:B}.
We are grateful to Toshio Mikami for the discussions on Theorem \ref{thm:rep} and for giving us relevant references on the duality result in Theorem \ref{thm:duality-h}.

\section{Representation formula for solutions}
\label{repsec}

In this section, we prove Theorem \ref{thm:rep}, which gives a 
variational representation formula for solutions to Problem \ref{P1} 
 in terms of generalized holonomic measures.
We begin by establishing a lower bound for this variational formula.

\begin{lem}
\label{VL}
Suppose that Assumptions \ref{A1}-\ref{A3} hold.
Let $u$ solve Problem \ref{P1}. Then, for all $\nu_1 \in \cP(\T^n)$ and $t>0$,
\begin{equation*}%\label{ineq:VL}
\int_{\Tt^n}u(z,t)\,d\nu_1(z) \leq
\inf_{\gamma\in \cH(\nu_1; 0,t)}
 \left[
 \int_{\T^n \times \R^n\times[0,t]} 
 L(z,q)\,d\gam(z,q,s)+
\int_{\T^n}u(z,0)\,d\nu^\gamma(z)
\right]. 
\end{equation*}
\end{lem}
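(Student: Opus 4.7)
\textbf{Proof plan for Lemma \ref{VL}.} The plan is to test the holonomic constraint defining $\cH(\nu_0,\nu_1;0,t)$ against a regular approximation of $u$ and then invoke the Fenchel--Young inequality pointwise. Fix an arbitrary $\gam\in\cH(\nu_1;0,t)$ and set $\nu_0:=\nu^\gam$. If $\int L(z,q)\,d\gam=+\infty$, the claimed inequality is trivial, so I may assume $\int L\,d\gam<\infty$. Combined with the coercivity $L(z,q)\geq |q|^{m'}/C_0-C_0$, this upgrades the $\zeta$-integrability in the definition of $\cH$ to $\int |q|^{m'}\,d\gam<\infty$; in particular $\int|q|\,d\gam<\infty$, which will be crucial for the Fenchel--Young step below.

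The main obstacle is that $u$ is only Lipschitz, so I cannot directly plug $\varphi=u$ into the test-function identity for $\gam$. To fix this I would invoke the approximation result announced in Appendix \ref{Appen:A}: for every $\ep>0$ there exists $u^\ep\in C^2(\T^n\times[0,t])$ which is a (classical) subsolution of an approximate equation of the form $u^\ep_s-a(z)\Del u^\ep+H(z,Du^\ep)\leq \eta_\ep$, with $\eta_\ep\to 0$, uniform Lipschitz bounds $\|Du^\ep\|_{L^\infty},\|u^\ep_s\|_{L^\infty},\|\Del u^\ep\|_{L^\infty}\leq M$ (independent of $\ep$), and $u^\ep\to u$ uniformly on $\T^n\times[0,t]$ as $\ep\to 0$.

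With such $u^\ep$ at hand, I would substitute $\varphi=u^\ep$ into the definition of $\cH(\nu_0,\nu_1;0,t)$ to obtain
\[
\int_{\T^n\times\R^n\times[0,t]}\!\left(u^\ep_s-a(z)\Del u^\ep+q\cdot Du^\ep\right)d\gam
=\int_{\T^n}\! u^\ep(z,t)\,d\nu_1-\int_{\T^n}\! u^\ep(z,0)\,d\nu_0.
\]
The approximate subsolution inequality gives $u^\ep_s-a(z)\Del u^\ep\leq \eta_\ep-H(z,Du^\ep)$, and the Legendre duality \eqref{legendre} yields the pointwise bound
\[
q\cdot Du^\ep(z,s)-H(z,Du^\ep(z,s))\leq L(z,q).
\]
Combining these,
\[
u^\ep_s-a(z)\Del u^\ep+q\cdot Du^\ep\;\leq\; L(z,q)+\eta_\ep.
\]
Integrating against $\gam$ and using that the total mass of $\gam$ is $t$ (obtained by testing the constraint with $\varphi(z,s)=s$), I get
\[
\int_{\T^n}u^\ep(z,t)\,d\nu_1-\int_{\T^n}u^\ep(z,0)\,d\nu_0
\;\leq\;\int_{\T^n\times\R^n\times[0,t]} L(z,q)\,d\gam+\eta_\ep\, t.
\]

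The key point justifying the integration is that each term on the left is integrable against $\gam$: $u^\ep_s$ and $\Del u^\ep$ are bounded while $\gam$ has finite total mass, and $|q\cdot Du^\ep|\leq M|q|$ is integrable because $\int|q|\,d\gam<\infty$ (as noted in the first paragraph). The Fenchel--Young upper bound $L(z,q)+\eta_\ep$ is likewise integrable, since $\int L\,d\gam<\infty$ by assumption.

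Finally, letting $\ep\to 0$, the uniform convergence $u^\ep\to u$ on $\T^n\times[0,t]$ and $\eta_\ep\to 0$ yield
\[
\int_{\T^n}u(z,t)\,d\nu_1(z)
\leq \int_{\T^n\times\R^n\times[0,t]} L(z,q)\,d\gam(z,q,s)+\int_{\T^n}u_0(z)\,d\nu^\gam(z).
\]
Taking the infimum over $\gam\in\cH(\nu_1;0,t)$ then gives the lemma. The only nontrivial step is the construction of the $C^2$ subsolution $u^\ep$ with uniform Lipschitz bounds and vanishing defect $\eta_\ep$; this is precisely what is deferred to Appendix \ref{Appen:A}, and everything else is a routine test-function computation combined with the Legendre transform.
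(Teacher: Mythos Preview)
Your proof is correct and follows essentially the same route as the paper: test the holonomy identity against a $C^2$ approximate subsolution of \eqref{eq:1} (the paper uses the mollification $u^\alpha$ of Proposition~\ref{prop:approx A}, shifted in time), apply the Fenchel--Young inequality $q\cdot Du^\ep\le L+H$, and pass to the limit. One small overclaim: the Appendix~\ref{Appen:A} construction does \emph{not} give a uniform-in-$\ep$ bound on $\|\Del u^\ep\|_{L^\infty}$ (mollification at scale $\alpha$ yields $\|D^2 u^\alpha\|_\infty=O(\alpha^{-1})$), but you do not need this---for each fixed $\ep$ the function is $C^2$ and $\gam$ has finite mass, which is all the integrability argument requires.
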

\begin{proof}
Let $u$ solve Problem \ref{P1}. Because Assumptions \ref{A1}-\ref{A3} hold, 
$u$ is globally Lipschitz continuous on $\T^n\times[0,\infty)$ 
(see \cite[Proposition 3.5]{A-T}, \cite[Proposition 4.15]{LMT} for instance).

Fix smooth symmetric standard mollifiers, 
$\theta \in C_c^\infty(\R^{n},[0,\infty))$ and $\rho \in C_c^\infty(\R,[0,\infty))$. 
More precisely, $\supp \theta\subset\ol{B}(0,1)\subset\R^{n}$, $\supp \rho\subset\ol{B}(0,1)\subset\R$, $\theta(x)=\theta(-x)$, $\rho(s)=\rho(-s)$, and $\|\theta\|_{L^{1}(\R^{n})}=\|\rho\|_{L^{1}(\R)}=1$. 
For $\alpha>0$,  let 
$\theta^\alpha(x)=\alpha^{-n} \theta(\alpha^{-1}x)$ for $x\in \R^{n}$, 
and $\rho^{\alpha}(t)=\alpha^{-1} \rho(\alpha^{-1}t)$ for $t\in \R$. 
Let $u^{\al}$ be the function given by \eqref{func:w-al}; that is,
\begin{equation*}
u^{\al}(x,t)=
\int_{0}^{\infty} \rho^\al(s)\int_{\T^n} \theta^\al(y)u(x-y,t-s)\,dy ds \quad \text{ for $(x,t)\in\T^n\times[\al,\infty)$.}
\end{equation*}
Set 
\[
\tilde{u}(x,t):=u^{\al}(x,t+\alpha) \quad \text{ for all $(x,t)\in\T^n\times[0,\infty)$. }
\]
We notice $\tilde u$ is a $C^2(\T^n\times[0,\infty))$ subsolution to \eqref{eq:alpha}; that is,

\begin{equation*}
\tilde{u}_t - a(x)\Del \tilde{u} + H(x,D\tilde{u}) \leq C \al^{1/2} \quad \text{ on } \T^n \times [0, \infty) 
\end{equation*}
as stated in Proposition \ref{prop:approx A} in Appendix \ref{Appen:A}.

Fix any $\gamma\in \cH(\nu_1; 0,t)$. 
By the definition of $\cH(\nu_1; 0,t)$, 
\begin{align*}
&\int_{\T^n \times \R^n\times[0,t]} 
\left( \tilde u_t(z,s)-a(z) \Del \tilde u(z,s)+q\cdot D \tilde u(z,s) \right)
\,d\gam (z,q,s) \nonumber \\
=\, & 
\int_{\Tt^n}\tilde u(z,t)\,d\nu_1 (z)
-\int_{\T^n}\tilde u(z,0)\,d\nu^{\gamma}(z).  
\end{align*}
Because of the Legendre transform definition in \eqref{legendre}, 
\[
q\cdot D\tilde u(z,s)\leq L(z,q)+H(z, D\tilde u(z,s)). 
\]
Accordingly, we have 
\begin{align*}
&
\int_{\T^n}\tilde{u}(z,t)\,d\nu_1 -\int_{\T^n}\tilde{u}(z,0)\,d\nu^{\gamma}(z)\\
=\,&\, 
\int_{\T^n\times\R^n\times[0,t]} \left(\tilde{u}_t-a(z)\Del \tilde u+q\cdot D\tilde{u} \right)\,d\gam (z,q,s)\\
\le\,&\,  
\int_{\T^n\times\R^n\times[0,t]}\left(\tilde{u}_t-a(z)\Del \tilde u+H(z,D\tilde{u})+L(z,q) \right)\,d\gam (z,q,s)\\
\le\,&\,  
\int_{\T^n\times\R^n\times[0,t]}L(z,q)\,d\gam (z,q,s)
+C \al^{1/2} t. 
\end{align*} 
Sending $\al \to 0$, we obtain
\[
\int_{\Tt^n}u(x,t)\,d\nu_1 \leq \int_{\T^n \times \R^n\times[0,t]}  L(z,q)\,d\gam(z,q,s)+
\int_{\T^n}u(z,0)\,d\nu^\gamma(z).
\]
Because $\gamma\in \cH(\nu_1; 0,t)$ is arbitrary, the statement follows. 
\end{proof}

\begin{remark}
If $a>0$ on $\T^n$, that is \eqref{eq:1} is uniformly parabolic, the solution to Problem \ref{P1}, $u$, is in $C^2(\T^n\times[0,\infty))$. 
In this case, in the preceding proof, we do not need 
to use 
 $u^\al$, the smoothed version of $u$. 
However, if $a$ degenerates, we expect only Lipschitz regularity for $u$. 
Therefore, we need to build  $C^2$  approximate subsolutions. This construction is rather technical  and is described in Proposition \ref{prop:approx A} in Appendix \ref{Appen:A}. The general diffusion case, 
is examined in 
 Proposition \ref{prop:approx} in Appendix \ref{Appen:B}.
\end{remark}

To prove the opposite bound, we consider the following regularized version of Problem \ref{P1}.
\begin{problem}
	\label{P2} In the setting of Problem \ref{P1} and
	for $\ep>0$,  
	find $u^\ep:\Tt^n\times [0,\infty)\to \Rr$ solving
	\begin{equation} \label{eq:ap}
	\begin{cases}
	u^\ep_t -a(x) \Delta u^\ep+ H(x,Du^\ep) = \ep \Del u^\ep \quad &\text { in } \T^n \times (0,\infty),\\
	u(x,0) = u_0(x) \quad &\text{ on } \T^n.
	\end{cases}
	\end{equation}
\end{problem}

If Assumptions \ref{A1}-\ref{A3} hold, Problem \ref{P2} has a unique solution, $u^\ep\in C^2(\Tt^n\times [0,+\infty))$ due to the added viscous term.   
Moreover, $u^\ep$ is Lipschitz continuous uniformly in $\ep \in (0,1)$.
Further, by standard viscosity solution theory, $u^\ep\to u$ locally uniformly on $\T^n \times [0,\infty)$, as $\ep\to 0$, where $u$ solves Problem \ref{P1}.

Now, we use  the nonlinear adjoint method, see
\cite{E3, T1}, to construct measures that satisfy 
an approximated holonomy condition.

\begin{lem}
	\label{L1}
	For $\ep>0$,  
	let $u^\ep$ solve Problem \ref{P2}. 
	Then, for any $\nu_1\in \cP(\Tt^n)$, there exist a Radon measure $\gamma^\ep\in\cR(\Tt^n\times (t_0,t_1))$ and a probability measure $\nu_0^{\ep}\in \cP(\Tt^n)$ such that  
\begin{align}
\notag
&\int_{\T^n \times \R^n\times[t_0,t_1]} 
\left(\varphi_t(z,s)-a(z) \Delta  \varphi(z)-\ep \Delta \varphi(z)+q\cdot D\varphi(z,s) \right)
\,d\gam^\ep(z,q,s) \nonumber \\
=&\, 
\int_{\T^n}\varphi(z,t_1)\,d\nu_1(z)
-\int_{\T^n}\varphi(z,t_0)\,d\nu^\ep_0(z)  
\label{shp}
\end{align}	
for all $\varphi\in C^2(\Tt^n\times [t_0,t_1])$.
Moreover, $\gamma^\ep$-almost everywhere, 
$q=D_pH(x, Du^\ep)$.
\end{lem}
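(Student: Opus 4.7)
The plan is to apply the nonlinear adjoint method to the uniformly parabolic problem \eqref{eq:ap}. Linearizing \eqref{eq:ap} at $u^\ep$ produces the operator
$Lv=v_t-(a(x)+\ep)\Delta v+D_pH(x,Du^\ep)\cdot Dv$,
whose formal adjoint in divergence form reads
\[
L^*\sigma=-\sigma_t-\Delta\bigl((a(x)+\ep)\sigma\bigr)-\div\bigl(D_pH(x,Du^\ep)\sigma\bigr).
\]
I would solve $L^*\sigma^\ep=0$ on $\T^n\times(t_0,t_1)$ with terminal condition $\sigma^\ep(\cdot,t_1)=\nu_1$, then define $\gamma^\ep$ as the push-forward of $\sigma^\ep(x,s)\,dx\,ds$ under the map $(x,s)\mapsto(x,D_pH(x,Du^\ep(x,s)),s)$, and set $\nu_0^\ep=\sigma^\ep(\cdot,t_0)\,dx$.

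First, I would approximate $\nu_1$ by smooth probability densities $\rho_1^\delta\in C^\infty(\T^n)$ via mollification, with $\rho_1^\delta\,dx\cws\nu_1$ as $\delta\to 0$. Because $\ep>0$, the solution $u^\ep$ of Problem \ref{P2} is smooth in $\T^n\times(0,\infty)$ by standard parabolic regularity, so the backward Cauchy problem
\[
L^*\sigma^{\ep,\delta}=0 \ \text{in}\ \T^n\times(t_0,t_1),\qquad \sigma^{\ep,\delta}(\cdot,t_1)=\rho_1^\delta,
\]
is uniformly parabolic with smooth coefficients. Standard linear parabolic theory then gives a unique classical solution $\sigma^{\ep,\delta}\in C^2(\T^n\times[t_0,t_1])$, non-negative by the maximum principle applied to $L^*$ written in non-divergence form, with constant mass $\int_{\T^n}\sigma^{\ep,\delta}(\cdot,s)\,dx=1$ for every $s\in[t_0,t_1]$ (seen by integrating $L^*\sigma^{\ep,\delta}=0$ over $\T^n$, where the divergence and Laplacian terms vanish).

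With $\gamma^{\ep,\delta}$ and $\nu_0^{\ep,\delta}:=\sigma^{\ep,\delta}(\cdot,t_0)\,dx$ defined as above, the identity \eqref{shp} for the pair $(\gamma^{\ep,\delta},\nu_0^{\ep,\delta})$ becomes
\begin{align*}
&\int_{t_0}^{t_1}\!\!\int_{\T^n}\bigl(\varphi_t-(a(z)+\ep)\Delta\varphi+D_pH(z,Du^\ep)\cdot D\varphi\bigr)\sigma^{\ep,\delta}\,dz\,ds\\
&\qquad=\int_{\T^n}\varphi(z,t_1)\,\rho_1^\delta(z)\,dz-\int_{\T^n}\varphi(z,t_0)\,d\nu_0^{\ep,\delta}(z),
\end{align*}
which follows directly from integration by parts in $z$ and $s$ together with $L^*\sigma^{\ep,\delta}=0$. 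Concentration on the graph of $D_pH(\cdot,Du^\ep)$ holds by construction.

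The final step is the limit $\delta\to 0$. Because $u^\ep$ is Lipschitz uniformly in $\ep\in(0,1)$, Assumption \ref{A3} yields $|D_pH(x,Du^\ep(x,s))|\leq C$ uniformly, so each $\gamma^{\ep,\delta}$ is supported in $\T^n\times\overline{B}(0,C)\times[t_0,t_1]$ with total mass $t_1-t_0$. Prokhorov's theorem extracts weak-$*$ subsequential limits $\gamma^\ep$ and $\nu_0^\ep\in\cP(\T^n)$, and the identity \eqref{shp} passes to the limit by continuity and boundedness of the integrand on the common compact support. The property $q=D_pH(x,Du^\ep(x,s))$ is preserved in the limit since the measures are concentrated on the closed graph of a continuous function, and such concentration is stable under weak-$*$ convergence. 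I expect the main technical point to be setting up the adjoint equation with measure-valued terminal data $\nu_1$ and justifying the approximation and limit; this is clean here thanks to the uniform Lipschitz bound on $u^\ep$, which provides automatic compactness of the velocity marginals and makes the limit step routine.
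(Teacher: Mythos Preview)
Your proposal is correct and follows essentially the same route as the paper: linearize \eqref{eq:ap} at $u^\ep$, solve the adjoint Fokker--Planck equation backward from terminal data $\nu_1$, and push the resulting density forward through $(x,s)\mapsto(x,D_pH(x,Du^\ep(x,s)),s)$ to produce $\gamma^\ep$, with $\nu_0^\ep$ the density at time $t_0$. The only difference is that the paper solves the adjoint equation directly with the measure $\nu_1$ as terminal data (invoking standard Fokker--Planck theory), whereas you first mollify $\nu_1$ to $\rho_1^\delta$ and then pass to the limit $\delta\to 0$ using the uniform compact support of the velocity marginals; this is a harmless extra layer of care and leads to the same conclusion.
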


\begin{proof}
For $\varphi\in C^2(\Tt^n\times [t_0,t_1])$, the linearization of \eqref{eq:ap} around  $u^\ep$ is
	\[
	\cL^\ep[\varphi]= 
	\varphi_t + D_pH(x,Du^\ep)\cdot D\varphi - (a(x)+\ep)\Del \varphi.
	\]
	Accordingly, the corresponding adjoint equation is the Fokker-Planck equation
	\begin{equation} \label{eq:adjoint}
	\begin{cases}
	-\sig^\ep_t -\Del (a(x)\sig^\ep) - \text{div}(D_pH(x,Du^{\ep})\sig^{\ep}) = \ep \Del \sig^\ep \quad &\text { in } \T^n \times (t_0,t_1),\\
	\sig^\ep(x,t_1) = \nu_1 \quad &\text{ on } \T^n.    
	\end{cases}
	\end{equation}
	By standard properties of the Fokker-Planck equation,
	\[
	\sig^{\ep}>0 \ \text{on} \ \T^n\times[t_0,t_1) \quad \text{ and } \quad 
	\int_{\T^n} \sig^{\ep}(x,t)\,dx=1 \ \text{for all} \ t\in[t_0,t_1].
	\]
	Next, 
	for each $\ep>0$ and $t\in[t_0,t_1]$, 
	let $\beta^{\ep}_t \in \mathcal{P}(\T^n \times \R^n)$ 
	be the probability measure determined by
	\begin{equation*}%\label{def-nu-ep-second}
	\int_{\T^n} \psi(x,Du^{\ep}) \sig^{\ep}(x,t)\,dx
	=\int_{\T^n \times \R^n} \psi(x,p)\,d\beta^{\ep}_t(x,p)
	\end{equation*}
	for all $\psi \in C_c(\T^n \times \R^n)$.
	For $t\in[t_0,t_1]$, let $\gam^{\ep}_t \in \mathcal{P}(\T^n \times \R^n)$ be the pullback of $\beta^{\ep}_t$ by the map $\Phi(x,q)=(x,D_q L(x,q))$, that is,  
	\begin{equation*}%\label{def-mu-ep-second}
	\int_{\T^n \times \R^n} \psi(x,p)\,d\beta^{\ep}_t(x,p)=\int_{\T^n \times \R^n} \psi(x,D_q L(x,q))\,d\gam^{\ep}_t(x,q)
	\end{equation*}
	for all $\psi \in C_c(\T^n \times \R^n)$.

	Define the measures $\beta^\ep, \gam^\ep \in \cR(\T^n \times \R^n\times[t_0,t_1])$
	by  
	\begin{equation*}	
	\int_{\T^n\times\R^n\times[t_0,t_1]}f\,d\beta^\ep
	=
	\int_{t_0}^{t_1}\int_{\T^n\times\R^n}f(\cdot,t)\,d\beta^\ep_t\,dt, 
	\end{equation*}
	and
		\begin{equation*}
	\int_{\T^n\times\R^n\times[t_0,t_1]}f\,d\gam^\ep
	=
	\int_{t_0}^{t_1}\int_{\T^n\times\R^n}f(\cdot,t)\,d\gam^\ep_t\,dt 
	\end{equation*}
	for any $f\in C_c(\T^n\times\R^n\times[t_0,t_1])$.
	
	Multiplying the first equation in \eqref{eq:adjoint} by  an arbitrary function, $\varphi\in C^2(\T^n\times[t_0,t_1])$, and integrating on $\T^n$, we gather
	\begin{align*}
	\ep\int_{\T^n}\sig^\ep\Del\varphi\,dx
	=&\, 
	-\int_{\T^n}\varphi\sig^\ep_t\,dx
	+\int_{\T^n}(-a(x)\Del \varphi +D_pH(x,Du^\ep)\cdot D\varphi)\sig^\ep\,dx\\ 
	=&\, 
	-\int_{\T^n}(\varphi\sig^\ep)_t\,dx
	+\int_{\T^n}(\varphi_t-a(x)\Del \varphi +D_pH(x,Du^\ep)\cdot D\varphi) \sig^\ep\,dx. 
	\end{align*}
	Next, integrating on $[t_0,t_1]$, we deduce the identity
	\begin{align*}
	&\ep\int_{t_0}^{t_1}\int_{\T^n}\sig^\ep\Del\varphi\,dxdt
	=\ep\int_{\T^n\times\R^n\times[t_0,t_1]}\Del\varphi\,d\gamma^{\ep}(x,q,t) 
	\\
	=&\, 
	-\int_{t_0}^{t_1}\int_{\T^n}(\varphi\sig^\ep)_t\,dxdt
	+\int_{t_0}^{t_1}\int_{\T^n}(\varphi_t-a(x)\Del \varphi +D_pH(x,Du^\ep)\cdot D\varphi) \sig^\ep\,dxdt\\
	=&\, 
	-\left[\int_{\T^n}\varphi(\cdot,t_1)\sig^\ep(\cdot,t_1)\,dx-\int_{\T^n}\varphi(\cdot,t_0)\sig^\ep(\cdot,t_0)\,dx\right]
	\\
	&
	\quad \quad\quad\quad\quad \quad\quad\quad+\int_{t_0}^{t_1}\int_{\T^n\times\R^n}(\varphi_t-a(x)\Del \varphi +q\cdot D\varphi) \,d\gam^\ep_t(x,q)\\
	=&\, 
	-\left[\int_{\T^n}\varphi(\cdot,t_1)\,d\nu_1-\int_{\T^n}\varphi(\cdot,t_0)\,d\nu_0^\ep\right]\\
	&
	\quad \quad\quad\quad \quad \quad\quad\quad+\int_{\T^n\times\R^n\times[t_0,t_1]}(\varphi_t-a(x)\Del \varphi +q\cdot D\varphi)\, d\gam^\ep(x,q,t),  
	\end{align*}
	where $d\nu_0^{\ep}:=\sigma^\ep(x,t_0)\,dx$, which implies the desired result. 
\end{proof}

\begin{cor}\label{cor:non-empty}
Under Assumptions \ref{A1}--\ref{A3}, for all $0<t_0<t_1$ and $\nu_1\in\cP(\T^n)$, 
\begin{equation}
\label{nemp}
\cH(\nu_1;t_0,t_1)\neq \emptyset. 
\end{equation}
\end{cor}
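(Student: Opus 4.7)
The plan is to extract the desired measure from the approximated holonomic measures $\gamma^\ep$ produced by Lemma \ref{L1}, via a vanishing viscosity limit $\ep\to 0$. Fix $0<t_0<t_1$ and $\nu_1\in\cP(\T^n)$, and for each $\ep>0$ apply Lemma \ref{L1} to obtain $\gam^\ep\in\cR^+(\T^n\times\R^n\times[t_0,t_1])$ and $\nu_0^\ep\in\cP(\T^n)$ satisfying \eqref{shp}. The goal is to show, up to a subsequence, $\gam^\ep\rightharpoonup \gam$ and $\nu_0^\ep\rightharpoonup\nu_0$ for measures $\gam,\nu_0$ with $\gam\in\cH(\nu_0,\nu_1;t_0,t_1)$.

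First I would record two uniform bounds. Testing \eqref{shp} with $\varphi(z,s)=s$ gives $\gam^\ep(\T^n\times\R^n\times[t_0,t_1])=t_1-t_0$, so the total masses are uniformly bounded. Next, by standard uniform Lipschitz estimates for solutions of Problem \ref{P2} (see \cite{A-T,LMT}, used already before Lemma \ref{L1}), there exists $K>0$, independent of $\ep\in(0,1)$, such that $\|Du^\ep\|_{L^\infty}\leq K$. By Assumption \ref{A3}, $|D_pH(x,p)|\leq C_0(1+|p|^{m-1})$, so the last statement of Lemma \ref{L1} implies that $\gam^\ep$ is supported in the compact set $\T^n\times\ol{B}(0,R)\times[t_0,t_1]$ for some $R>0$ independent of $\ep$.

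With uniform mass and uniformly compact support in hand, Prokhorov's theorem yields a subsequence, still denoted $\gam^\ep$, such that $\gam^\ep\rightharpoonup \gam\in\cR^+(\T^n\times\R^n\times[t_0,t_1])$ in the weak-$*$ sense, with $\supp\gam\subset\T^n\times\ol{B}(0,R)\times[t_0,t_1]$. Compactness of $\cP(\T^n)$ then gives a further subsequence along which $\nu_0^\ep\rightharpoonup\nu_0\in\cP(\T^n)$. It remains to pass to the limit in \eqref{shp}. For any $\varphi\in C^2(\T^n\times[t_0,t_1])$, the bounded and continuous integrand $\varphi_t-a(z)\Del\varphi+q\cdot D\varphi$ converges under $\gam^\ep\rightharpoonup\gam$ on the common compact support, while
\[
\left|\int_{\T^n\times\R^n\times[t_0,t_1]}\ep\,\Del\varphi\,d\gam^\ep\right|
\leq \ep\,\|\Del\varphi\|_{L^\infty}(t_1-t_0)\longrightarrow 0.
\]
The right-hand side passes to the limit by weak convergence of $\nu_0^\ep$ and the fact that $\nu_1$ is fixed. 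Thus $\gam$ satisfies the holonomy identity against every $\varphi\in C^2(\T^n\times[t_0,t_1])$.

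Finally, because $\gam$ is supported in the compact set $\T^n\times\ol{B}(0,R)\times[t_0,t_1]$, we have $\int|q|^\zeta\,d\gam\leq R^\zeta(t_1-t_0)<\infty$ for any $\zeta\in(1,m')$, so $\gam\in\cH(\nu_0,\nu_1;t_0,t_1)\subset\cH(\nu_1;t_0,t_1)$, establishing \eqref{nemp}. The main technical point is the uniform Lipschitz bound on $u^\ep$, which is what simultaneously forces the supports of the $\gam^\ep$ to lie in a common compact set and thereby provides both the compactness needed for weak convergence and the moment bound required for membership in $\cH(\nu_1;t_0,t_1)$; once that estimate is granted, every other step is a routine weak-$*$ passage to the limit.
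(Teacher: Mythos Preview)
Your proof is correct and follows essentially the same approach as the paper: invoke Lemma \ref{L1}, use the uniform Lipschitz bound on $u^\ep$ to get compactness of $\gamma^\ep$ (via the support constraint $q=D_pH(x,Du^\ep)$), and pass to the limit in \eqref{shp}. You have spelled out several details the paper leaves implicit---the total-mass identity from testing with $\varphi(z,s)=s$, the explicit vanishing of the $\ep\,\Delta\varphi$ term, and the verification of the $|q|^\zeta$ moment---but the argument is the same.
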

\begin{proof}
Let $\nu_0^{\ep}\in \cP(\Tt^n)$ and $\gamma^\ep\in\cR(\T^n \times \R^n \times [t_0,t_1])$ be 
the measures 
given by Lemma \ref{L1}. 
Because $u^\ep$ is Lipschitz in $x$, there exists $C>0$
such that 
\[
\|Du^\ep(\cdot,t)\|_{\Li(\T^n)}\le C
\]
for all $t\in[t_0,t_1]$. Therefore, there exists a sequence  
$\{\ep_j\}\to 0$, 
$\nu_0\in \cP(\Tt^n)$,  and $\gamma \in \cR(\T^n\times\R^n\times[t_0,t_1])$ such that 
\begin{equation}\label{subseq1}
\nu_0^{\ep_j}\rightharpoonup \nu_0 \quad \text{and} \quad
\gam^{\ep_j}\rightharpoonup \gam
\quad\text{as} \ \ j\to\infty,  
\end{equation}
weakly in $\cP(\Tt^n)$ and $\cR(\T^n\times\R^n\times[t_0,t_1])$, respectively. 
Thus, 
using \eqref{shp}, we conclude that 
$\gam\in\cH(\nu_0,\nu_1;t_0,t_1)$, which implies \eqref{nemp}.
\end{proof}

Finally, we use Lemma \ref{L1} to establish the opposite inequality to the one
in Lemma \ref{VL}.

\begin{lem}\label{lem:ineq1}
	For any $\nu\in\cP(\T^n)$ and $t>0$, we have
	\[
	\int_{\T^n}u(z,t)\,d\nu(z)
	\ge 
	\inf_{\gam\in\cH(\nu;0,t)}
	\left\{
	\int_{\T^n\times\R^n\times[0,t]}L(z,q)\,d\gam(z,q,s)+\int_{\T^n}u_0(z)\,d\nu^\gam(z)
	\right\}. 
	\] 
\end{lem}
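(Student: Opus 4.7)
The plan is to build, for each $\ep > 0$, a measure $\gam^\ep$ whose $L$-action plus an appropriate initial term exactly equals the left-hand side at the regularized level, and then pass to the limit $\ep \to 0$ using the weak compactness already exploited in Corollary \ref{cor:non-empty}.

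Fix $\nu \in \cP(\T^n)$ and $t > 0$. Let $u^\ep$ solve Problem \ref{P2}, and apply Lemma \ref{L1} with $t_0 = 0$, $t_1 = t$, $\nu_1 = \nu$ to produce $\nu_0^\ep \in \cP(\T^n)$ and $\gam^\ep \in \cR(\T^n \times \R^n \times [0,t])$ satisfying \eqref{shp}. The decisive property supplied by Lemma \ref{L1} is that $\gam^\ep$-almost everywhere $q = D_pH(z, Du^\ep(z,s))$, which is exactly the condition for equality in the Legendre-Fenchel duality \eqref{legendre}, so that $\gam^\ep$-a.e.,
\[
q \cdot Du^\ep(z,s) - H(z, Du^\ep(z,s)) = L(z, q).
\]

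Next, test \eqref{shp} against $\varphi = u^\ep$ itself, which is admissible since $u^\ep \in C^2(\T^n \times [0,t])$. Substituting the PDE \eqref{eq:ap} in the form $u^\ep_t - a(z)\Delta u^\ep - \ep \Delta u^\ep = -H(z, Du^\ep)$ into the integrand on the left of \eqref{shp}, and using the pointwise Legendre equality above, the left-hand side collapses to $\int L(z,q)\, d\gam^\ep$. Since $u^\ep(\cdot, 0) = u_0$, this produces the exact identity
\[
\int_{\T^n \times \R^n \times [0,t]} L(z,q)\, d\gam^\ep(z,q,s) + \int_{\T^n} u_0(z)\, d\nu_0^\ep(z) = \int_{\T^n} u^\ep(z, t)\, d\nu(z).
\]

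To conclude, pass to the limit $\ep_j \to 0$ along the subsequence furnished by the proof of Corollary \ref{cor:non-empty}, obtaining $\nu_0^{\ep_j} \rightharpoonup \nu_0$ and $\gam^{\ep_j} \rightharpoonup \gam \in \cH(\nu_0,\nu;0,t) \subset \cH(\nu;0,t)$ with $\nu^\gam = \nu_0$. Standard viscosity theory gives uniform convergence $u^\ep \to u$, which handles the right-hand side; weak convergence of $\nu_0^\ep$ combined with continuity of $u_0$ handles the initial integral; and the $L$-term is treated by noting that the uniform Lipschitz bound on $u^\ep$ together with the growth estimate $|D_pH(x,p)| \le C_0(1 + |p|^{m-1})$ from Assumption \ref{A3} confines the $q$-marginal of each $\gam^\ep$ to a fixed compact ball. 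This uniform $q$-compactness, combined with the continuity of $L$, yields $\int L\, d\gam \le \liminf_j \int L\, d\gam^{\ep_j}$ (in fact equality), and the desired inequality follows by taking the infimum over $\cH(\nu;0,t)$.

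The main technical point is this last limit passage for the nonlinear integral of $L$: without the uniform Lipschitz bound on $u^\ep$ one would have no control on the $q$-support of $\gam^\ep$ and mass could escape to infinity, spoiling the lower bound. All remaining steps reduce to direct applications of weak convergence and the Legendre equality built into $\gam^\ep$ by the adjoint construction.
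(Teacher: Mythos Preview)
Your proof is correct and follows essentially the same approach as the paper: construct $\gam^\ep$ via Lemma~\ref{L1}, establish the exact identity $\int L\,d\gam^\ep + \int u_0\,d\nu_0^\ep = \int u^\ep(\cdot,t)\,d\nu$ at the regularized level, and pass to the limit along the subsequence from Corollary~\ref{cor:non-empty}. The only cosmetic difference is that the paper derives the identity by computing $\int L\,d\gam^\ep_s$ directly via the $\sigma^\ep$-representation and integration by parts against the adjoint equation, whereas you obtain it more compactly by testing the holonomy identity \eqref{shp} with $\varphi = u^\ep$ and invoking the Legendre equality; the two computations are equivalent.
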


\begin{proof}
	For $s\in[0,t]$, let $\gam^\ep_s$ be the measure constructed in the proof of Lemma \ref{L1}
	for $t_0=0$ and $t_1=t$.
	By the definition of Legendre transform in \eqref{legendre}, we have 
	\[
	L(z,q)=D_pH(z, D_qL(z,q))D_qL(z,q)-H(z, D_qL(z,q)).
	\]
	Therefore,  
	\begin{align*}
	&\int_{\T^n \times \R^n} L(z,q)\,d\gam^{\ep}_s(z,q)\\
	=&\,
	\int_{\T^n \times \R^n} 
	(D_pH(z,D_qL(z,q))\cdot D_qL(z,q)-H(z,D_qL(z,q)))\,d\gam^{\ep}_s(z,q)\\ 
	=&\,
	\int_{\T^n \times \R^n} 
	(D_pH(z,p)\cdot p -H(z,p))\,d\beta^{\ep}_s(z,p)\\
	=&\,
	\int_{\T^n} 
	(D_pH(x,Du^{\ep})\cdot Du^\ep -H(x,Du^{\ep}))\sig^{\ep}(x,s)\,dx 
	\end{align*}
	for all $s\in[0,t]$. 
	Moreover, integrating by parts and using the adjoint equation, \eqref{eq:adjoint},  and \eqref{eq:ap}, we obtain
	\begin{align*}
	&
	\int_{\T^n} 
	(D_pH(x,Du^{\ep})\cdot Du^{\ep} -H(x,Du^{\ep}))\sig^{\ep}\,dx\\
	=&\,
	\int_{\T^n} 
	-\div(D_pH(x,Du^{\ep})\sig^{\ep})u^{\ep}-H(x,Du^{\ep})\sig^{\ep}\,dx\\
	=&\,
	\int_{\T^n} 
	(\sig^{\ep}_t+\ep\Del\sig^\ep+\Del(a\sig^\ep))u^{\ep}
	+(u^\ep_t-\ep\Del u^\ep-a \Del u^\ep)\sig^{\ep}\,dx\\
	=&\,
	\int_{\T^n} 
	(u^\ep\sig^{\ep})_t\,dx. 
	\end{align*}
	
Integrating on $[0,t]$ yields 
\begin{align*}
\int_{\T^n \times \R^n\times[0,t]} L(x,q)\,d\gam^{\ep}(x,q,s)
&=\,
\int_0^t\int_{\T^n \times \R^n} L(x,q)\,d\gam^{\ep}_s(x,q)ds\\
&=\, 
\int_{\T^n}u^\ep(x,t)\,d\nu
-\int_{\T^n} u_0(x)\,d\nu^\ep_0.  
\end{align*}	
Taking subsequences $\{\gam^{\ep_j}\}$ and $\{\nu_0^{\ep_j}\}$ as in \eqref{subseq1} yields  
\[
\int_{\T^n}u(z,t)\,d\nu(z)
=\int_{\T^n \times \R^n \times [0,t]} L(z,q)\,d\gam(z,q,s)+
\int_{\T^n} u_0(z)\,d\nu_0(z). 
\]
Because 
$\gam\in\cH(\nu_0,\nu;0,t)$,
we obtain the inequality claimed in the statement. 
\end{proof}

\begin{proof}[Proof of Theorem \ref{thm:rep}]
The statement follows directly by combining Lemma \ref{L1} with Lemma
	\ref{lem:ineq1}.
\end{proof}

\section{Properties of generalized holonomic measures}
\label{S3}

Here, we discuss
two properties of generalized holonomic measures in Lemmas \ref{lem:Mather-holonomic} and \ref{lem:connection}. Before proceeding, we recall that the results in \cite[Proposition 2.3]{MR3581314}, \cite[Lemma 2.1]{MT2} (see also the proof of Proposition \ref{prop:erg} in Appendix \ref{Appen:B}) imply that 
 the Mather set and the projected Mather set
are not empty; that is, 
\[
\widetilde\cM\not=\emptyset, \quad\text{ and }\quad \cM\not=\emptyset.
\]

\begin{lem}\label{lem:Mather-holonomic}
	Let $t>0$, $\mu\in\widetilde\cM$, and set $\nu:=\proj_{\T^n} \, \mu\in\cM$. 
	Define $d\gam(x,q,s):=d\mu(x,q)\,ds$ for all $s\in[0,t]$; that is, 
\begin{equation}\label{eq:M-hol}
	\int_{\T^n\times\R^n\times[0,t]}f(x,q,s)\,d\gam(x,q,s)
	=
	\int_0^{t}\int_{\T^n\times\R^n}f (x,q,s)\,d\mu(x,q)\,ds
%	=t\int_{\T^n\times\R^n}\,d\mu(x,q).
\end{equation}
for all $f\in C_c(\T^n\times\R^n\times[0,t])$.
Then, 
	\[
	\gam\in\cH(\nu, \nu; 0,t).
	\]  
\end{lem}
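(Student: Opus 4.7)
The plan is to verify directly that $\gamma$, as defined in \eqref{eq:M-hol}, satisfies each of the three requirements in the definition of $\cH(\nu,\nu;0,t)$: it is a non-negative Radon measure on $\T^n\times\R^n\times[0,t]$, it has finite $|q|^\zeta$ mass, and it satisfies the time-dependent holonomy identity with boundary measures $\nu_0=\nu_1=\nu$. Non-negativity is immediate from the non-negativity of $\mu\in\cP(\T^n\times\R^n)$ and of Lebesgue measure $ds$; and the $\zeta$-moment bound is just a factor of $t$ times the corresponding moment for $\mu$, which is finite since $\mu\in\widetilde{\cM}\subset\cH$.

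The substantive step is the holonomy identity. Given $\varphi\in C^2(\T^n\times[0,t])$, I would split
\[
\int_{\T^n\times\R^n\times[0,t]}\!\bigl(\varphi_t-a(z)\Delta\varphi+q\cdot D\varphi\bigr)\,d\gamma
=\int_0^t\!\!\int_{\T^n\times\R^n}\varphi_t(z,s)\,d\mu(z,q)\,ds
+\int_0^t\!\!\int_{\T^n\times\R^n}\bigl(-a(z)\Delta\varphi+q\cdot D\varphi\bigr)\,d\mu\,ds
\]
using Fubini (justified by the continuity of the integrand on a compact time interval together with the $|q|^\zeta$ integrability for the gradient term and the boundedness of $a\Delta\varphi$ and $\varphi_t$). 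For the second term, for each fixed $s\in[0,t]$ the function $z\mapsto\varphi(z,s)$ lies in $C^2(\T^n)$, so by the definition of $\cH$ applied to $\mu$,
\[
\int_{\T^n\times\R^n}\bigl(-a(z)\Delta\varphi(z,s)+q\cdot D\varphi(z,s)\bigr)\,d\mu(z,q)=0,
\]
and this contribution vanishes after integrating in $s$.

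For the time-derivative term, since $\mu$ does not depend on $s$ I can interchange the order of integration and apply the fundamental theorem of calculus in the $s$-variable:
\[
\int_0^t\!\!\int_{\T^n\times\R^n}\varphi_t(z,s)\,d\mu(z,q)\,ds
=\int_{\T^n\times\R^n}\bigl(\varphi(z,t)-\varphi(z,0)\bigr)\,d\mu(z,q).
\]
Finally, since $\nu=\proj_{\T^n}\mu$, the right-hand side equals $\int_{\T^n}\varphi(z,t)\,d\nu(z)-\int_{\T^n}\varphi(z,0)\,d\nu(z)$, which is exactly the required boundary expression with $\nu_0=\nu_1=\nu$. This establishes $\gamma\in\cH(\nu,\nu;0,t)$.

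I do not foresee a real obstacle: every step is forced by the definitions, and the only thing to watch is the justification of Fubini and the interchange of $\partial_s$ with $\int d\mu$, both of which are standard given $\varphi\in C^2$ on a compact interval and the finiteness of $\int |q|^\zeta\,d\mu$.
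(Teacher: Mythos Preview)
Your proof is correct and follows essentially the same approach as the paper: split the integrand into the $\varphi_t$ part and the spatial part $-a(z)\Delta\varphi+q\cdot D\varphi$, kill the latter using $\mu\in\cH$ for each fixed $s$, and reduce the former to $\int_{\T^n}\varphi(\cdot,t)\,d\nu-\int_{\T^n}\varphi(\cdot,0)\,d\nu$ via Fubini and the fundamental theorem of calculus. Your write-up is in fact more careful than the paper's, since you explicitly verify non-negativity and the finite $|q|^\zeta$ moment and comment on the justification of Fubini.
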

\begin{proof}
	For all $\varphi\in C^2(\T^n\times[0,t])$, we have 
	\begin{align*}
	&\int_{\T^n\times\R^n\times[0,t]}
	\left(\varphi_t+q\cdot D\varphi-a(x)\Delta \varphi \right)\, d\gam\\
	=&\, 
	\int_0^t\int_{\T^n\times\R^n}\varphi_t\,d\mu\, ds
	+\int_0^t\int_{\T^n\times\R^n}\left( q\cdot D\varphi-a(x)\Delta \varphi \right)\, d\mu\,ds\\
	=&\, 
	\int_{\T^n\times\R^n}\left(\int_0^t\varphi_t\,ds\right)\,d\mu
	=
	\int_{\T^n}\varphi(x,t)\,d\nu
	-
	\int_{\T^n}\varphi(x,0)\,d\nu,  
	\end{align*}
	which implies the conclusion. 
\end{proof}

Next, we show how to concatenate holonomic measures.

\begin{lem}\label{lem:connection}
	Let $\nu_1,\nu_2, \nu_3\in\cP(\T^n)$, and $a, b>0$ 
	be such that 
	$\cH(\nu_1,\nu_2;0,a)\not=\emptyset$, 
	$\cH(\nu_2,\nu_3;0,b)\not=\emptyset$. 
	For any $\gam_1\in\cH(\nu_1,\nu_2;0,a)$ and $\gam_2\in\cH(\nu_2,\nu_3;0,b)$, 
	set 
	\[
	\gam(x,q,s)
	:=
	\left\{
	\begin{array}{ll}
	\gam_1(x,q,s) & \quad s\in[0,a], \\
	\gam_2(x,q,s-a) & \quad s\in[a,a+b].  
	\end{array}
	\right. 
	\]
	Then, 
	$\gam\in \cH(\nu_1,\nu_3;0,a+b)$. 
\end{lem}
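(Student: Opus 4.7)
The plan is to verify the three defining properties of the set $\cH(\nu_1,\nu_3;0,a+b)$ directly from the analogous properties of $\gam_1$ and $\gam_2$, splitting every integral over $[0,a+b]$ at $s=a$ and applying a translation in time to the piece on $[a,a+b]$.

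First, $\gam$ is non-negative as the concatenation of two non-negative Radon measures, and the moment bound $\int|q|^\zeta\,d\gam<\infty$ follows immediately from the corresponding bounds for $\gam_1$ on $\T^n\times\R^n\times[0,a]$ and for $\gam_2$ on $\T^n\times\R^n\times[0,b]$, by the change of variable $s\mapsto s-a$ on the upper slab. Next, fix $\varphi\in C^2(\T^n\times[0,a+b])$ and set $\tilde\varphi(z,s):=\varphi(z,s+a)$ for $(z,s)\in\T^n\times[0,b]$, which is $C^2$ on that domain. Both $\varphi$ restricted to $\T^n\times[0,a]$ and $\tilde\varphi$ are admissible test functions in the definitions of $\cH(\nu_1,\nu_2;0,a)$ and $\cH(\nu_2,\nu_3;0,b)$ respectively.

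The key computation is then to split the integral defining the holonomy condition for $\gam$ at $s=a$ and use the translation identity
\[
\int_{\T^n\times\R^n\times[a,a+b]}\!\! F(z,q,s)\,d\gam(z,q,s)
= \int_{\T^n\times\R^n\times[0,b]} \!\! F(z,q,s+a)\,d\gam_2(z,q,s),
\]
valid by the definition of $\gam$ on the upper slab. Applying this with $F(z,q,s)=\varphi_t(z,s)-a(z)\Delta\varphi(z,s)+q\cdot D\varphi(z,s)$ converts the upper-slab contribution into the corresponding expression involving $\tilde\varphi$ and $\gam_2$. Invoking the holonomy condition for $\gam_1$ on the lower slab and for $\gam_2$ on the translated upper slab, we obtain
\begin{align*}
&\int_{\T^n\times\R^n\times[0,a+b]}\!\!\left(\varphi_t-a(z)\Delta\varphi+q\cdot D\varphi\right)d\gam \\
&\quad= \left[\int_{\T^n}\varphi(z,a)\,d\nu_2 - \int_{\T^n}\varphi(z,0)\,d\nu_1\right]
+ \left[\int_{\T^n}\tilde\varphi(z,b)\,d\nu_3 - \int_{\T^n}\tilde\varphi(z,0)\,d\nu_2\right].
\end{align*}
Since $\tilde\varphi(z,0)=\varphi(z,a)$ and $\tilde\varphi(z,b)=\varphi(z,a+b)$, the two middle terms cancel exactly, leaving $\int_{\T^n}\varphi(z,a+b)\,d\nu_3 - \int_{\T^n}\varphi(z,0)\,d\nu_1$, which is precisely the holonomy identity for $\cH(\nu_1,\nu_3;0,a+b)$.

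I do not expect any real obstacle here: the statement is essentially a bookkeeping check, and the only point requiring minor care is the time-translation and the cancellation of the intermediate boundary terms at $s=a$, which is guaranteed by the fact that both $\gam_1$ and $\gam_2$ share the common marginal $\nu_2$ at their matching endpoints.
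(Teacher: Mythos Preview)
Your proposal is correct and follows essentially the same approach as the paper: split the integral at $s=a$, apply the time translation $s\mapsto s-a$ on the upper slab, invoke the holonomy identities for $\gam_1$ and $\gam_2$, and observe the telescoping cancellation of the $\nu_2$ terms. The only minor difference is that you explicitly note the nonnegativity and $|q|^\zeta$-moment checks, which the paper omits as obvious.
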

\begin{proof}
	By the definition of $\cH(\nu_1,\nu_2;0,a)$ and $\cH(\nu_2,\nu_3;0,b)$, 
	for any test function $\varphi\in C^2(\T^n\times [0, +\infty))$, we have 
	\begin{align*}
	&
	\int_{\T^n}\varphi(\cdot, a)\, d\nu_2- \int_{\T^n}\varphi(\cdot, 0)\, d\nu_1
	=\int_{\T^n\times\R^n\times[0,a]} \left(\varphi_t-a(x)\Delta \varphi+q\cdot D\varphi \right)\,d\gam_1, \\
	&
	\int_{\T^n}\varphi(\cdot, b)\, d\nu_3- \int_{\T^n}\varphi(\cdot, 0)\, d\nu_2
	=\int_{\T^n\times\R^n\times[0,b]} \left(\varphi_t-a(x)\Delta \varphi+q\cdot D\varphi \right)\,d\gam_2. 
	\end{align*}
	
	Thus, 
	\begin{align*}
	& 
	\int_{\T^n\times\R^n\times[0,a+b]} \left(\varphi_t-a(x)\Delta \varphi+q\cdot D\varphi \right)\,d\gam\\
	=&\, 
	\int_{\T^n\times\R^n\times[0,a]} \left(\varphi_t-a(x)\Delta \varphi+q\cdot D\varphi \right)\,d\gam 
	+\int_{\T^n\times\R^n\times[a,a+b]}\left(\varphi_t-a(x)\Delta \varphi+q\cdot D\varphi \right)\,d\gam\\
	=&\, 
	\int_{\T^n\times\R^n\times[0,a]}\left(\varphi_t-a(x)\Delta \varphi+q\cdot D\varphi \right)\,d\gam_1(x,q,s)\\
	&+\int_{\T^n\times\R^n\times[a,a+b]}\left(\varphi_t-a(x)\Delta \varphi+q\cdot D\varphi \right)\,d\gam_2(x,q,s-a)\\
	=&\, 
	\int_{\T^n}\varphi(\cdot, a)\, d\nu_2- \int_{\T^n}\varphi(\cdot, 0)\, d\nu_1\\
	&+\int_{\T^n\times\R^n\times[0,b]}\left( \varphi_t(x,s+a)-a(x) \Del \varphi(x,s+a)+q\cdot D\varphi(x,s+a) \right)\,d\gam_2(x,q,s)\\
	=&\, 
	\int_{\T^n}\varphi(\cdot, a)\, d\nu_2- \int_{\T^n}\varphi(\cdot, 0)\, d\nu_1
	+
	\int_{\T^n}\varphi(\cdot, a+b)\, d\nu_3- \int_{\T^n}\varphi(\cdot, a)\, d\nu_2\\
	=&\, 
	\int_{\T^n}\varphi(\cdot, a+b)\, d\nu_3- \int_{\T^n}\varphi(\cdot, 0)\, d\nu_1,  
	\end{align*}
	which implies 
	\[
	\gam\in \cH(\nu_1,\nu_3;0,a+b). 
	\qedhere
	\]
\end{proof}

\section{Peierls barrier and the generalized Aubry set}
\label{PBAS}

Suppose that Assumptions  \ref{A1}-\ref{A3} hold.
Let $u$ solve Problem \ref{P1}. As shown in \cite{CGMT}, there exists
a large-time limit, $u_\infty$, given by \eqref{uinfdef}. This limit function solves Problem \ref{E}.
As explained in the Introduction, by the results in \cite{MT2}, to characterize a solution of Problem \ref{E}, it is enough to determine the value 
\[
\int_{\T^n}u_{\infty}(z)\,d\nu(z) \quad\text{ for all} \ \nu\in\cM. 
\]
In this section, we provide this characterization and prove Theorem \ref{thm:profile}. 
Moreover, we examine two constructions from Aubry-Mather theory, the Peirls barrier and the projected Aubry set, and extend them in a way suitable for the study of degenerate diffusions.

Recall that, for $t>0$, we set
\begin{align*}
h_t(\nu_0,\nu_1)=
\left\{
\begin{array}{ll}
\displaystyle
\inf_{\gam\in\cH(\nu_0,\nu_1;0,t)}
\int_{\T^n\times\R^n\times[0,t]}L(x,q)\,d\gam 
& 
\quad \text{ if} \ \cH(\nu_0,\nu_1;0,t)\not=\emptyset, \\ 
+\infty & 
\quad \text{ if} \ \cH(\nu_0,\nu_1;0,t)=\emptyset,  
\end{array}
\right.  
\end{align*}
for $\nu_0, \nu_1\in \cP(\Tt^n)$.
Also,
\begin{equation}
\label{manep}
d(\nu_0,\nu_1) = \inf_{t>0} h_t(\nu_0,\nu_1).
\end{equation}

\begin{lem}
	Consider the setting of Problem \ref{P1} and
	suppose that Assumptions  \ref{A1}-\ref{A3} hold.
	Fix $t>0$ and $\nu_1\in\cP(\T^n)$. 
	Then, the map
	\[
	\nu\mapsto h_t(\nu,\nu_1)
	\] 
	is convex. 
\end{lem}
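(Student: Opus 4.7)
The plan is to exploit the fact that both the holonomy constraint defining $\cH(\nu_0,\nu_1;0,t)$ and the objective $\gam\mapsto\int L\,d\gam$ are \emph{linear} in $\gam$. So convexity should reduce to taking convex combinations of competitors. Fix $\nu_0^a, \nu_0^b\in\cP(\T^n)$ and $\lam\in[0,1]$; I may assume both $h_t(\nu_0^a,\nu_1)$ and $h_t(\nu_0^b,\nu_1)$ are finite, as otherwise the right-hand side of the convexity inequality equals $+\infty$ and nothing needs to be shown. For any $\gam_a\in\cH(\nu_0^a,\nu_1;0,t)$ and $\gam_b\in\cH(\nu_0^b,\nu_1;0,t)$, set $\gam:=\lam\gam_a+(1-\lam)\gam_b$.

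The main step is to verify that $\gam\in\cH(\lam\nu_0^a+(1-\lam)\nu_0^b,\nu_1;0,t)$. Clearly $\gam$ is a non-negative Radon measure, and
\[
\int_{\T^n\times\R^n\times[0,t]} |q|^\zeta\,d\gam = \lam\int |q|^\zeta\,d\gam_a + (1-\lam)\int|q|^\zeta\,d\gam_b < \infty.
\]
For every $\varphi\in C^2(\T^n\times[0,t])$, writing the defining identity of $\cH$ for $\gam_a$ and for $\gam_b$ and taking the convex combination gives
\[
\int (\varphi_t-a(z)\Del\varphi+q\cdot D\varphi)\,d\gam = \int\varphi(\cdot,t)\,d\nu_1 - \int\varphi(\cdot,0)\,d\bigl(\lam\nu_0^a+(1-\lam)\nu_0^b\bigr),
\]
which is precisely the holonomy condition with initial marginal $\lam\nu_0^a+(1-\lam)\nu_0^b$ and terminal marginal $\nu_1$ (the common terminal marginal is what makes the right-hand side collapse correctly).

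Once $\gam$ is an admissible competitor, linearity of the integral in $\gam$ yields
\[
h_t\bigl(\lam\nu_0^a+(1-\lam)\nu_0^b,\nu_1\bigr) \le \int L(x,q)\,d\gam = \lam\int L\,d\gam_a + (1-\lam)\int L\,d\gam_b,
\]
and taking the infimum over $\gam_a\in\cH(\nu_0^a,\nu_1;0,t)$ and then over $\gam_b\in\cH(\nu_0^b,\nu_1;0,t)$ delivers the desired inequality $h_t(\lam\nu_0^a+(1-\lam)\nu_0^b,\nu_1)\le \lam h_t(\nu_0^a,\nu_1)+(1-\lam)h_t(\nu_0^b,\nu_1)$. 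There is no genuine obstacle here: the whole argument is just that the admissible set is convex (in fact affinely parameterized by its marginals) and the functional is affine on it, so convexity of the value function is automatic. The only subtlety worth mentioning is the $+\infty$ convention, which is handled by restricting attention to the case where both base values are finite.
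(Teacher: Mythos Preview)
Your proof is correct and follows essentially the same approach as the paper: take convex combinations of admissible measures, verify the resulting measure lies in the holonomy class with the convex-combined initial marginal, and use linearity of the cost. You even add the explicit check of the $|q|^\zeta$-moment condition, which the paper omits but is indeed needed for $\gam$ to lie in the admissible class.
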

\begin{proof}
	Fix $t>0$ and $\nu_1\in\cP(\T^n)$. 
	Take $\nu_0, \bar \nu_0 \in \cP(\T^n)$.
	We must show that for $0\leq \lambda\leq 1$, 
	\[h_t(\lambda \nu_0+(1-\lambda)\bar \nu_0, \nu_1)\leq 
	\lambda h_t( \nu_0, \nu_1)+(1-\lambda) h_t( \bar \nu_0, \nu_1).
	\]
	If any of the terms in the right-hand side is $+\infty$, the result is trivial. Thus, 
	we may assume that $h_t( \nu_0, \nu_1)<+\infty$ and
	$h_t( \bar \nu_0, \nu_1)<+\infty$. Accordingly, 
	 $\cH(\nu_0, \nu_1; 0,t)\not=\emptyset$ and $\cH(\bar \nu_0, \nu_1; 0,t)\not=\emptyset$. 
	For $\gam_1\in \cH(\nu_0, \nu_1; 0,t)$ and $\gam_2\in \cH(\bar \nu_0, \nu_1; 0,t)$, let
	$\gam:=\lam\gam_1+(1-\lam)\gam_2$ for $\lam\in[0,1]$. 
	We claim that
	\begin{equation}
	\label{gin}
	\gam\in\cH(\lam\nu_0+(1-\lam)\bar \nu_0,\nu_1; 0,t). 
	\end{equation}
	To establish the claim, we fix 
$\varphi\in C^2(\T^n\times[0,t])$. Then
	\begin{align*}
	&  
	\int_{\T^n\times\R^n\times[0,t]}
	\left(\varphi_t-a\Del \varphi+q\cdot D\varphi\right)\,d(\lam\gam_1+(1-\lam)\gam_2) \\
	=\, &  
	\lam\left(
	\int_{\T^n\times\R^n\times[0,t]}
	\left(\varphi_t-a\Del \varphi+q\cdot D\varphi \right)\,d\gam_1
	\right)\\
	&
	+
	(1-\lam)
	\left(
	\int_{\T^n\times\R^n\times[0,t]}
	\left( \varphi_t-a\Del \varphi+q\cdot D\varphi \right)\,d\gam_2
	\right)\\
	=\, &  
	\lam\left(
	\int_{\T^n}\varphi(z,t)\,d\nu_1(z)
	-\int_{\T^n}\varphi(z,0)\,d\nu_0(z)  
	\right)\\
	&
	+(1-\lam)\left(
	\int_{\T^n}\varphi(z,t)\,d\nu_1(z)
	-\int_{\T^n}\varphi(z,0)\,d\bar \nu_0(z)  
	\right)\\
	=\,&  
	\int_{\T^n}\varphi(z,t)\,d\nu_1(z)
	-\int_{\T^n}\varphi(z,0)\,d(\lam\nu_0+(1-\lam)\bar\nu_0)(z),
	\end{align*}
	thus \eqref{gin} holds. 
	Accordingly, we have 
	\begin{align*}
	h_t(\lam\nu_0+(1-\lam)\bar \nu_0,\nu_1)
	\le 
	\lam \int_{\T^n\times\R^n\times[0,t]}L\,d\gam_1
	+(1-\lam)\int_{\T^n\times\R^n\times[0,t]}L\,d\gam_2.   
	\end{align*}
	Taking the infimum on $\gam_1\in \cH(\nu_0,\nu_1;0,t)$ and $\gam_2\in \cH(\bar \nu_0,\nu_1;0,t)$,
	we obtain 
	\[
	h_t(\lam\nu_0+(1-\lam)\bar\nu_0,\nu_1)\le 
	\lam h_t(\nu_0,\nu_1)
	+(1-\lam)h_t(\bar \nu_0,\nu_1). 
	\qedhere
	\]
\end{proof}

\medskip

Next, we define the {\em generalized Peierls barrier}, $h:\cP(\T^n)\times\cP(\T^n)\to\R\cup\{+\infty\}$, by
\begin{equation*}%\label{func:h}
h(\nu_0,\nu_1)=\liminf_{t\to\infty}h_t(\nu_0,\nu_1) \quad \text{ for } \nu_0, \nu_1 \in \cP(\T^n).  
\end{equation*}
In weak KAM theory, the Peierls barrier is a function
$h: \Tt^n\times \Tt^n\to\R$. However, our context, points in $\Tt^n$
are replaced by probability measures in $\cP(\Tt^n)$; thus, 
$h$ becomes a function on $\cP(\Tt^n)\times \cP(\Tt^n)$.

Now, we introduce an auxiliary function, $m:\cP(\T^n)\times\cP(\T^n)\to\R$, given 
by
\begin{equation*}%\label{func:m}
m(\nu_0,\nu_1):=
\sup_{w\in \Ss}\left[\int_{\T^n}w\,d\nu_1(x)- \int_{\T^n}w\,d\nu_0(x)\right],
\end{equation*}
where $\Ss$ is the set of all  Lipschitz viscosity solutions to \eqref{ee} given by \eqref{def:Ss}. 
The classical result for the existence of a solution of the ergodic problem implies that 
$\Ss\not=\emptyset$ (see \cite{LMT}, for instance). Thus, 
$m(\nu_0,\nu_1)$ is finite for all $\nu_0,\nu_1\in\cP(\T^n)$.

\begin{lem}\label{lem:m}
	Consider the setting of Problem \ref{E}  and 
	suppose that Assumptions  \ref{A1}-\ref{A3} hold.
	Then, 
	\begin{equation}\label{ineq:md}
	m(\nu_0,\nu_1)\le d(\nu_0,\nu_1)
	\quad 
	\text{ for all} \ \nu_0,\nu_1\in\cP(\T^n). 
	\end{equation}
\end{lem}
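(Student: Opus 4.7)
The plan is to test the holonomy identity for measures $\gam \in \cH(\nu_0,\nu_1;0,t)$ against a solution $w\in\Ss$ viewed as a time-independent test function on $\T^n\times[0,t]$. If $d(\nu_0,\nu_1)=+\infty$ there is nothing to prove, so fix $w\in\Ss$, $t>0$, and any $\gam\in\cH(\nu_0,\nu_1;0,t)$; the goal is to prove
\[
\int_{\T^n}w\,d\nu_1-\int_{\T^n}w\,d\nu_0 \le \int_{\T^n\times\R^n\times[0,t]} L(x,q)\,d\gam(x,q,s),
\]
for then taking the infimum over $\gam$ and $t$, and the supremum over $w$, yields \eqref{ineq:md}.

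Formally, if $w$ were $C^2$, use $\varphi(x,s):=w(x)$ in the definition of $\cH(\nu_0,\nu_1;0,t)$. Since $\varphi_t\equiv 0$, we get
\[
\int_{\T^n\times\R^n\times[0,t]}\!\!\left(-a(x)\Delta w+q\cdot Dw\right)d\gam
=\int_{\T^n}w\,d\nu_1-\int_{\T^n}w\,d\nu_0.
\]
By the Legendre-Fenchel inequality $q\cdot Dw\le L(x,q)+H(x,Dw)$ and the subsolution property $-a(x)\Delta w+H(x,Dw)\le 0$ (recall $c=0$), the left-hand side is bounded above by $\int L(x,q)\,d\gam$, giving the desired inequality.

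The main obstacle is that $w\in\Ss$ is only Lipschitz, so $Dw$ and $\Delta w$ are not classically defined and $w$ cannot directly be used as a $C^2$ test function. This is handled exactly as in the proof of Lemma \ref{VL}: replace $w$ by the mollified approximation $w^\al$ provided by Proposition \ref{prop:approx A} in Appendix \ref{Appen:A} (applied in the stationary setting, viewing $w$ as a time-independent viscosity subsolution of the parabolic equation \eqref{eq:1}). That proposition produces a $C^2$ function $w^\al$ satisfying
\[
-a(x)\Delta w^\al + H(x,Dw^\al)\le C\al^{1/2}\quad\text{on }\T^n,
\]
with $w^\al\to w$ uniformly as $\al\to 0$.

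Using $w^\al$ as the test function in the definition of $\cH(\nu_0,\nu_1;0,t)$ and applying the Legendre-Fenchel bound gives
\[
\int_{\T^n}w^\al\,d\nu_1-\int_{\T^n}w^\al\,d\nu_0
\le \int L(x,q)\,d\gam + C\al^{1/2}\,|\gam|,
\]
where the total mass $|\gam|=t$ (seen by testing with $\varphi(x,s)=s$ in the holonomy identity). Letting $\al\to 0$ and using uniform convergence of $w^\al$ to $w$ yields the desired pointwise bound for each $\gam$ and $w$; taking the infimum over $\gam\in\cH(\nu_0,\nu_1;0,t)$, then over $t>0$, and finally the supremum over $w\in\Ss$ completes the proof of \eqref{ineq:md}.
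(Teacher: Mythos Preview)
Your proposal is correct and follows essentially the same route as the paper's proof: mollify $w\in\Ss$ via Proposition \ref{prop:approx A} to obtain a $C^2$ approximate subsolution, plug it into the holonomy identity for $\gam\in\cH(\nu_0,\nu_1;0,t)$, apply the Legendre--Fenchel inequality, and let $\al\to 0$. Your explicit justification that $|\gam|=t$ by testing with $\varphi(x,s)=s$ is a nice detail the paper leaves implicit.
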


\begin{proof}
	If $d(\nu_0, \nu_1)=\infty$, then \eqref{ineq:md} is trivial. Therefore, 
	we only need to address the case where $d(\nu_0, \nu_1)<\infty$. 
	In this case, there exists $t>0$ such that 
	$\cH(\nu_0,\nu_1; 0, t)\not=\emptyset$. 
	Let $w$ be any Lipschitz viscosity solution to \eqref{ee}, and 
	let $w^{\al}\in C^\infty(\T^n)$ be the approximation of $w$
	given by \eqref{func:w-al} for $\alpha>0$; that is,
	\[
	w^\al(x)= \int_{\T^n} \theta^\al(y)w(x-y)\,dy \quad \text{ for $x \in\T^n$,}
	\]
	since $w$ is time-independent here.
	Then, $w^\al \in C^\infty(\T^n)$ satisfies
	\[
	H(x,Dw^\al) - a(x)\Del w^\al \leq C \al^{1/2} \quad \text{ in } \T^n.
	\]

For any $\gam\in\cH(\nu_0, \nu_1;0,t)$, we have 
	\begin{align*}
	&\int_{\T^n}w^\al(x)\, d\nu_1(x)-\int_{\T^n}w^\al(x)\, d\nu_0(x)\\
	= &\, 
	\int_{\T^n\times\R^n\times[0,t]}
	\left(q\cdot Dw^\al(x)-a(x)\Del w^\al(x) \right)\, d\gam(x,q,s)\\
	\le &\,
	\int_{\T^n\times\R^n\times[0,t]}\left(L(x,q)+H(x,Dw^\al(x))-a(x)\Del w^\al(x)\right)\, d\gam(x,q,s)\\ 
	\le &\, 
	\int_{\T^n\times\R^n\times[0,t]}L(x,q)\, d\gam(x,q,s)
	+ C \al^{1/2} t.
	\end{align*}
	Send $\al\to0$   to yield
	\[
	\int_{\T^n}w(x)\, d\nu_1(x)-\int_{\T^n}w(x)\, d\nu_0(x)
	\le 
	\int_{\T^n\times\R^n\times[0,t]}L(x,q)\, d\gam(x,q,s). 
	\]
	Thus, taking the supremum with respect to $w \in \Ss$ gives the 
claim in the statement. 
\end{proof}

\begin{cor}
\label{CC1}	
Consider the setting of Problem \ref{E}  and 
suppose that Assumptions  \ref{A1}-\ref{A3} hold. Then, for all $\nu_0, \nu_1\in \cP(\T^n)$, 
\begin{equation}
\label{mltd}
m(\nu_0,\nu_1)\le
d(\nu_0,\nu_1)\le h(\nu_0,\nu_1).
\end{equation}
Moreover,	
$h(\nu, \nu)\geq 0$ for all $\nu \in \cP(\T^n)$.
\end{cor}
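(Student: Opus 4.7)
The proof is essentially a bookkeeping assembly of results and definitions already in hand, so I would keep it brief.

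The plan is to prove the two inequalities in \eqref{mltd} separately and then deduce the non-negativity on the diagonal as an immediate consequence. First, the inequality $m(\nu_0,\nu_1)\le d(\nu_0,\nu_1)$ is exactly Lemma \ref{lem:m}, so there is nothing further to do for that part.

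Next, for $d(\nu_0,\nu_1)\le h(\nu_0,\nu_1)$, I would simply invoke the definitions. By \eqref{manep}, $d(\nu_0,\nu_1)=\inf_{t>0}h_t(\nu_0,\nu_1)$, while
\[
h(\nu_0,\nu_1)=\liminf_{t\to\infty}h_t(\nu_0,\nu_1).
\]
Any $\liminf$ as $t\to\infty$ is bounded below by the global infimum over $t>0$, so $d(\nu_0,\nu_1)\le h(\nu_0,\nu_1)$. (If $\cH(\nu_0,\nu_1;0,t)=\emptyset$ for all $t$, then both quantities equal $+\infty$ and the inequality is trivial.)

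Finally, to show $h(\nu,\nu)\ge 0$ for all $\nu\in\cP(\T^n)$, the key observation is that $m(\nu,\nu)=0$ by the very definition of $m$, since
\[
m(\nu,\nu)=\sup_{w\in\Ss}\left[\int_{\T^n}w\,d\nu-\int_{\T^n}w\,d\nu\right]=0,
\]
and this supremum is well defined because $\Ss\neq\emptyset$. Chaining the two inequalities already established gives $0=m(\nu,\nu)\le d(\nu,\nu)\le h(\nu,\nu)$, as claimed. There is no real obstacle here; the only point that requires a moment's care is handling the case $\cH(\nu_0,\nu_1;0,t)=\emptyset$ in the second inequality, which is resolved by the convention that both $d$ and $h$ take value $+\infty$ when no admissible measures exist.
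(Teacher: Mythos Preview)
Your proof is correct and follows essentially the same approach as the paper: invoke Lemma~\ref{lem:m} for $m\le d$, use the definitions $d=\inf_{t>0}h_t$ and $h=\liminf_{t\to\infty}h_t$ for $d\le h$, and then observe $m(\nu,\nu)=0$ to conclude $h(\nu,\nu)\ge 0$.
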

\begin{proof}
Because of the definition of $d(\nu_0,\nu_1)$ in \eqref{def:d} and \eqref{manep}, it is clear that 
$d(\nu_0,\nu_1)\le h(\nu_0,\nu_1)$.
Lemma \ref{lem:m}  gives $m(\nu_0,\nu_1)\le
d(\nu_0,\nu_1)$.
Hence, we have \eqref{mltd}. 
Finally, because $m(\nu,\nu)=0$,
we conclude that $h(\nu, \nu)\geq 0$.  
\end{proof}

The {\em generalized projected  Aubry set}
is the set
	\begin{equation*}%\label{def:Aubry}
	\cA:=\{\nu\in\cP(\T^n)\,:\, h(\nu,\nu)=0\}. 
	\end{equation*}
As in standard Aubry-Mather theory, the generalized projected Aubry set contains the generalized projected Mather set, as we establish next.

\begin{pro}\label{prop:A-M}
	Consider the setting of Problem \ref{E}  and 
     suppose that Assumptions \ref{A1}-\ref{A3} hold.
	Then, 
	\[
	\cM\subset\cA. 
	\] 
	In particular, 
	\[
	\cA\not=\emptyset. 
	\]
\end{pro}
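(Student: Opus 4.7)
The plan is to show the inclusion $\cM \subset \cA$ by producing, for each $\nu \in \cM$, a sequence (indeed a family indexed by $t$) of holonomic measures witnessing that $h_t(\nu,\nu) \le 0$ for every $t>0$, and then to combine this with the reverse inequality $h(\nu,\nu) \ge 0$ already recorded in Corollary \ref{CC1}.

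First, I would fix $\nu \in \cM$ and pick a generalized Mather measure $\mu \in \widetilde{\cM}$ with $\proj_{\T^n} \mu = \nu$. Because the ergodic constant has been normalized to zero (Remark \ref{czero}), the identity \eqref{eq:erg-const} gives
\[
\int_{\T^n \times \R^n} L(x,q)\, d\mu(x,q) = 0.
\]
For any $t > 0$, Lemma \ref{lem:Mather-holonomic} produces the product measure $d\gamma(x,q,s) := d\mu(x,q)\, ds$ on $\T^n \times \R^n \times [0,t]$ and tells us that $\gamma \in \cH(\nu,\nu;0,t)$. In particular, $\cH(\nu,\nu;0,t) \neq \emptyset$, so $h_t(\nu,\nu) < +\infty$ and we may estimate it using this specific $\gamma$:
\[
h_t(\nu,\nu) \le \int_{\T^n \times \R^n \times [0,t]} L(x,q)\, d\gamma(x,q,s) = t \int_{\T^n \times \R^n} L(x,q)\, d\mu(x,q) = 0.
\]
Taking $\liminf_{t \to \infty}$ yields $h(\nu,\nu) \le 0$.

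On the other hand, Corollary \ref{CC1} gives $h(\nu,\nu) \ge 0$ for every $\nu \in \cP(\T^n)$, as a direct consequence of $m(\nu,\nu) = 0$. Combining these two bounds produces $h(\nu,\nu) = 0$, which is precisely the statement $\nu \in \cA$. Finally, the non-emptiness assertion $\cA \neq \emptyset$ is immediate from the inclusion just proved together with the fact, recalled at the beginning of Section \ref{S3}, that $\cM \neq \emptyset$.

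There is no serious obstacle here: the proof is a straightforward consequence of the two key structural inputs already in hand, namely Lemma \ref{lem:Mather-holonomic} (which embeds stationary Mather measures into the time-dependent holonomic framework) and the normalized minimum value \eqref{eq:erg-const}. The only mild subtlety is being careful that the measure $\gamma$ constructed from $\mu$ really is an admissible competitor in the definition of $h_t(\nu,\nu)$, with matching boundary marginals at $s=0$ and $s=t$ equal to $\nu$; this is exactly what Lemma \ref{lem:Mather-holonomic} delivers.
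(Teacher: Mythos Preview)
Your proof is correct and follows essentially the same approach as the paper's: both use Lemma \ref{lem:Mather-holonomic} to embed a Mather measure $\mu$ as a time-dependent holonomic measure $\gamma\in\cH(\nu,\nu;0,t)$, invoke \eqref{eq:erg-const} to conclude $h_t(\nu,\nu)\le 0$, and combine this with the lower bound from Corollary \ref{CC1}.
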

\begin{proof}
By Corollary \ref{CC1}, 
	\[
	0\le h(\nu,\nu)\quad\text{ for all} \ \nu\in\cP(\T^n). 
	\]
	
	Let $\mu$ be any generalized Mather measure, and set 
	$\nu:=\proj_{\T^n}\,\mu$. 
	Let
 $d\tilde{\gam}(x,q,s):=d\mu(x,q)\,ds$ for all $s\in[0,t]$. 
 	By Lemma \ref{lem:Mather-holonomic}, 
  $\tilde{\gam}\in\cH(\nu,\nu;0,t)$ for all $t>0$. 
	Thus, 
	\begin{align*}
	h(\nu,\nu)
	=&\, 
	\liminf_{t\to\infty}h_t(\nu,\nu)
	=
	\liminf_{t\to\infty} 
	\inf_{\gam\in\cH(\nu,\nu;0,t)}
	\int_{\T^n\times\R^n\times[0,t]}L(x,q)\,d\gam(x,q,s)\\
	\le&\, 
	\liminf_{t\to\infty} 
	\int_{\T^n\times\R^n\times[0,t]}L(x,q)\,d\tilde{\gam}(x,q,s)
	=
	\liminf_{t\to\infty} 
	\int_0^t\int_{\T^n\times\R^n}L(x,q)\,d\mu\, ds\\
	=&\, 
	0, 
	\end{align*}
in light of \eqref{eq:erg-const}, 	which finishes the proof. 
	Because of the results in \cite{MR3581314, MT2} (see also the proof of Proposition \ref{prop:erg}), we have $\cM\not=\emptyset$. 
	Thus, $\cA\not=\emptyset$. 
\end{proof}

Now, we prove a stronger version of Theorem \ref{thm:profile}; that is, \eqref{maver} holds
for all measures in $\Aa$.

\begin{teo}\label{thm:profile-A}
Suppose that Assumptions  \ref{A1}-\ref{A3} hold. 
Let $u$ solve Problem \ref{P1} and $u_\infty$ be the large time limit of $u$ in \eqref{uinfdef}.  
Then,
	\[
	\int_{\T^n} u_\infty(z) \,d\nu(z) = \inf_{\nu_0\in\cP(\T^n)} 
	\left[d(\nu_0,\nu) + \int_{\T^n} u_0(z)\,d\nu_0(z)  \right]
	\]
	for all $\nu\in\cA$.
\end{teo}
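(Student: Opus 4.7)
The plan is to rewrite the representation formula of Theorem \ref{thm:rep} by grouping the infimum over $\gamma\in\cH(\nu;0,t)$ according to its initial projection $\nu^\gamma$. Since $\cH(\nu;0,t)=\bigcup_{\nu_0\in\cP(\T^n)}\cH(\nu_0,\nu;0,t)$, Theorem \ref{thm:rep} yields
\[
\int_{\T^n}u(z,t)\,d\nu(z)
=\inf_{\nu_0\in\cP(\T^n)}\left[h_t(\nu_0,\nu)+\int_{\T^n}u_0(z)\,d\nu_0(z)\right].
\]
This identity will drive both inequalities; the two directions of the theorem are then treated separately.

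For the inequality $\geq$, I would simply use $h_t(\nu_0,\nu)\geq d(\nu_0,\nu)$, which holds by the definition \eqref{manep}. This gives
\[
\int_{\T^n}u(z,t)\,d\nu(z)\geq \inf_{\nu_0\in\cP(\T^n)}\left[d(\nu_0,\nu)+\int_{\T^n}u_0(z)\,d\nu_0(z)\right]
\]
for every $t>0$. Passing $t\to\infty$ and using \eqref{uinfdef} on the left produces the desired lower bound. Note that this direction never uses the Aubry hypothesis $\nu\in\cA$; it is a purely formal consequence of the representation formula.

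The inequality $\leq$ is the substantive step, and is where $\nu\in\cA$ enters. Fix $\nu_0\in\cP(\T^n)$; if $d(\nu_0,\nu)=+\infty$ the bound is trivial, so assume $d(\nu_0,\nu)<\infty$. Given $\varepsilon>0$, choose $s>0$ and $\gamma_0\in\cH(\nu_0,\nu;0,s)$ with
\[
\int_{\T^n\times\R^n\times[0,s]}L(x,q)\,d\gamma_0\leq d(\nu_0,\nu)+\varepsilon.
\]
Since $h(\nu,\nu)=\liminf_{t\to\infty}h_t(\nu,\nu)=0$ by hypothesis, I can select a sequence $t_k\to\infty$ and measures $\gamma_k\in\cH(\nu,\nu;0,t_k)$ with $\int L\,d\gamma_k\to 0$. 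Concatenating $\gamma_0$ with $\gamma_k$ via Lemma \ref{lem:connection} produces a measure $\widetilde\gamma_k\in\cH(\nu_0,\nu;0,s+t_k)$ satisfying
\[
\int_{\T^n\times\R^n\times[0,s+t_k]}L\,d\widetilde\gamma_k
=\int L\,d\gamma_0+\int L\,d\gamma_k.
\]
Applying Theorem \ref{thm:rep} at time $s+t_k$ to $\widetilde\gamma_k$ gives
\[
\int_{\T^n}u(z,s+t_k)\,d\nu(z)\leq \int L\,d\gamma_0+\int L\,d\gamma_k+\int_{\T^n}u_0\,d\nu_0.
\]
Letting $k\to\infty$, the uniform convergence \eqref{uinfdef} makes the left-hand side tend to $\int u_\infty\,d\nu$, while on the right $\int L\,d\gamma_k\to 0$. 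Taking $\varepsilon\to 0$ and then infimum over $\nu_0$ yields the upper bound.

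The main obstacle is the upper bound, specifically the construction of a competitor in $\cH(\nu_0,\nu;0,T)$ for arbitrarily large $T$ with the cost staying close to $d(\nu_0,\nu)+\int u_0\,d\nu_0$. The Aubry condition $h(\nu,\nu)=0$ is precisely the ingredient that allows the tail of the trajectory to cost essentially nothing, and Lemma \ref{lem:connection} is what makes gluing of generalized holonomic measures rigorous in the parabolic setting. Everything else is a careful bookkeeping of limits, where uniform convergence of $u(\cdot,t)$ to $u_\infty$ lets us pass to the limit inside the spatial integral against $\nu$.
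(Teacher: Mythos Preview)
Your proof is correct and follows essentially the same approach as the paper's: the $\geq$ direction is identical, and for $\leq$ both arguments exploit the Aubry condition $h(\nu,\nu)=0$ to append near-zero-cost loops at $\nu$ onto an $\varepsilon$-optimal trajectory from $\nu_0$ to $\nu$. The only cosmetic difference is that you glue the measures explicitly via Lemma~\ref{lem:connection} and apply Theorem~\ref{thm:rep} once on $[0,s+t_k]$, whereas the paper applies Theorem~\ref{thm:rep} twice (first on $[0,T]$, then to the shifted solution $u(\cdot,\cdot+T)$ on $[0,t_k]$), which amounts to using the semigroup property in place of the concatenation lemma.
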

\begin{proof} 
We rewrite the representation formula in Theorem \ref{thm:rep} as  
\[
\int_{\T^n} u(z,t)\, d\nu(z) = \inf_{\substack{\gam \in \cH(\nu_0,\nu;0,t)  \\ \nu_0\in \cP(\Tt^n)}} 
	\left[\int_{\T^n \times \R^n \times [0,t]} L(x,q) \, d\gam(x,q,s) + \int_{\T^n} u_0(z) \,d\nu_0(z)\right]. 
\]
Accordingly, we have
\begin{equation*}%\label{ineq:profile}
\int_{\T^n} u(z,t)\, d\nu(z)\ge
\inf_{\substack{\gam \in \cH(\nu_0,\nu;0,t)  \\ \nu_0\in \cP(\Tt^n)}} \left[d(\nu_0,\nu) + \int_{\T^n} u_0 \,d\nu_0\right] 
= 
\inf_{\nu_0\in\cP(\T^n)} \left[d(\nu_0,\nu) + \int_{\T^n} u_0\,d\nu_0 \right]
\end{equation*}
for all $\nu\in\cP(\T^n)$. Thus, by letting $t\to \infty$, we get 
\[
\int u_\infty(z) \,d\nu(z) \geq \inf_{\nu_0\in\cP(\T^n)} \left[d(\nu_0,\nu) + \int_{\T^n} u_0(z)\,d\nu_0(z)  \right].
\]

Let $\nu\in\cA$. Fix $\ep>0$, and 
	take $\nu_0\in\cP(\T^n)$, $T>0$, and $\gam_1\in \cH(\nu_0,\nu;0,T)$ such that
\begin{align*}
&\inf_{\nu_0\in\cP(\T^n)} \left[d(\nu_0,\nu) + \int_{\T^n} u_0(z)\,d\nu_0(z)  \right] + \ep \\
\geq& \int_{\T^n \times \R^n \times [0,T]} L(z,q) \, d\gam_1(z,q,s) + \int_{\T^n} u_0(z) \,d\nu_0(z) \geq \int_{\T^n} u(z,T) \, d\nu(z) . 
\end{align*}
Because $\nu\in\cA$, there exists a sequence $\{t_k\}_{k\in\N}$ such that $t_k\to\infty$ and 
	\[
	h_{t_k}(\nu,\nu)\to \liminf_{t\to \infty } 	h_{t}(\nu,\nu)=h(\nu,\nu)=0 \quad\text{as} \ k\to\infty. 
	\]
	Accordingly, there exists $k_0\in\N$ such that for all $k\ge k_0$, 
	\[
	\ep\ge h_{t_k}(\nu, \nu). 
	\]
	Moreover, there exists a corresponding measure, $\gam_k\in\cH(\nu,\nu;0,t_k)$,
	such that 
	\[
	2\ep\ge h_{t_k}(\nu, \nu)+\ep
	\ge 
	\int_{\T^n \times \R^n \times [0,t_k]} L(x,q) \, d\gam_k(x,q,s). 
	\]
We note that $u(\cdot,\cdot+T)$ solves \eqref{eq:1} with initial data $u(\cdot,T)$.
By the representation formula in Theorem \ref{thm:rep}, for $k \geq k_0$,
	\begin{align*}
	&\inf_{\nu_0\in\cP(\T^n)} \left[d(\nu_0,\nu) + \int_{\T^n} u_0\,d\nu_0  \right]+\ep
	\ge 
	\int_{\T^n \times \R^n \times [0,T]} L(x,q) \, d\gam_1(x,q,s) + \int_{\T^n} u_0 \,d\nu_0\\
	\ge &\,
	\int_{\T^n} u(z,T) \, d\nu(z) 
	+ 
	\int_{\T^n \times \R^n \times [0,t_k]} L(x,q) \, d\gam_k(x,q,s)-2\ep\\
	 \ge &\,
	\int_{\T^n} u(x,T+t_k)\, d\nu-2\ep. 
	\end{align*}
	Letting $k\to \infty$ and then $\ep \to 0$ gives
\[
\int_{\T^n}  u_\infty \, d\nu \leq \inf_{\nu_0\in\cP(\T^n)} \left[d(\nu_0,\nu) + \int_{\T^n} u_0\,d\nu_0  \right],
\]
which finishes the proof. 
\end{proof}

Theorem  \ref{thm:profile} follows from the preceding result, as we point out next. 

\begin{proof}[Proof of Theorem  \ref{thm:profile}]
	According to Proposition \ref{prop:A-M}, $\Mm\subset \Aa$. 
	Thus, the statement follows directly from Theorem
	\ref{thm:profile-A}. 
\end{proof}

Next, we record a convexity property for the generalized Ma\~n\'e
critical potential. 
We notice that we do not know if the maps $\nu\mapsto d(\nu,\bar \nu)$ and
$\nu\mapsto d(\bar \nu,\nu)$ are convex for all $\bar \nu\in \cP(\Tt^n)$.
We can only prove that the maps $\nu\mapsto d(\nu,\bar \nu)$ and
$\nu\mapsto d(\bar \nu,\nu)$ are convex for all $\bar \nu \in \Mm$.

\begin{lem}
	Consider the setting of Problem \ref{P1} and suppose that Assumptions \ref{A1}-\ref{A3} hold. Then, 
    for any projected Mather measure $\bar \nu\in \Mm$, the maps
	\[
	\nu\mapsto d(\nu,\bar \nu), 
	\quad 
	\nu\mapsto d(\bar \nu,\nu) 
	\] 
	are convex for $\nu\in\cP(\T^n)$. 
\end{lem}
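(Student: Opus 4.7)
The plan is to reduce the two convexity assertions to the time-slice convexity already established for $h_t(\cdot,\bar\nu)$. The obstacle is that the infimum defining $d$ is taken over $t>0$, so two competitors $\gamma_1\in\cH(\nu_0,\bar\nu;0,t_1)$ and $\gamma_2\in\cH(\tilde\nu_0,\bar\nu;0,t_2)$ need not live over a common time horizon; a naive convex combination is then not a holonomic measure. The hypothesis $\bar\nu\in\Mm$ is used precisely to fix this: combining Lemma \ref{lem:Mather-holonomic} with the normalization \eqref{eq:erg-const} produces, for every $s>0$, a measure $\tilde\gam_s\in\cH(\bar\nu,\bar\nu;0,s)$ with $\int L\,d\tilde\gam_s=0$, which we can freely prepend or append by Lemma \ref{lem:connection} without increasing the cost.

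First I would treat $\nu\mapsto d(\nu,\bar\nu)$. Given $\nu_0,\tilde\nu_0\in\cP(\T^n)$ and $\lam\in[0,1]$, one may assume $d(\nu_0,\bar\nu),\,d(\tilde\nu_0,\bar\nu)<\infty$; otherwise the inequality is trivial. For $\ep>0$, pick $t_1,t_2>0$ and $\gam_i\in\cH(\cdot,\bar\nu;0,t_i)$ with $\int L\,d\gam_i\le d(\cdot,\bar\nu)+\ep$. Setting $T:=\max\{t_1,t_2\}$ and concatenating the shorter competitor with $\tilde\gam_{T-t_i}\in\cH(\bar\nu,\bar\nu;0,T-t_i)$ via Lemma \ref{lem:connection}, one obtains $\hat\gam_i\in\cH(\nu_0,\bar\nu;0,T)$ and $\check\gam_2\in\cH(\tilde\nu_0,\bar\nu;0,T)$ whose costs are unchanged because the inserted segment has zero $L$-mass. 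Exactly as in the proof of convexity of $h_t(\cdot,\bar\nu)$, the measure $\lam\hat\gam_1+(1-\lam)\check\gam_2$ lies in $\cH(\lam\nu_0+(1-\lam)\tilde\nu_0,\bar\nu;0,T)$, so
\[
d(\lam\nu_0+(1-\lam)\tilde\nu_0,\bar\nu)\le h_T\bigl(\lam\nu_0+(1-\lam)\tilde\nu_0,\bar\nu\bigr)\le \lam d(\nu_0,\bar\nu)+(1-\lam)d(\tilde\nu_0,\bar\nu)+\ep,
\]
and sending $\ep\to 0$ yields convexity.

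The second assertion for $\nu\mapsto d(\bar\nu,\nu)$ is symmetric. Here, given $\gam_i\in\cH(\bar\nu,\nu_i;0,t_i)$ that nearly realize $d(\bar\nu,\nu_i)$, we instead \emph{prepend} $\tilde\gam_{T-t_i}\in\cH(\bar\nu,\bar\nu;0,T-t_i)$ before $\gam_i$ (using Lemma \ref{lem:connection} in the order $\bar\nu\rightsquigarrow\bar\nu\rightsquigarrow\nu_i$), obtaining competitors in $\cH(\bar\nu,\nu_i;0,T)$ of equal cost. Convex combination then gives an element of $\cH(\bar\nu,\lam\nu_1+(1-\lam)\tilde\nu_1;0,T)$, and the same estimate closes the argument.

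The only nontrivial point is the time-matching via Mather measures; once that is in place, the linearity of the holonomy constraint in $\gamma$ and the linearity of $\gamma\mapsto\int L\,d\gam$ do the rest.
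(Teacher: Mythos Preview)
Your proof is correct and follows essentially the same approach as the paper: the paper likewise matches the time horizons by prepending a zero-cost Mather segment (via Lemma~\ref{lem:Mather-holonomic} and \eqref{eq:erg-const}) to the shorter competitor, then takes the convex combination using the linearity of the holonomy constraint. The only cosmetic difference is that the paper works directly with arbitrary competitors and takes infima at the end, whereas you fix $\ep$-optimal competitors and send $\ep\to 0$; these are equivalent.
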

\begin{proof}
	We only prove that the map $\nu\mapsto d(\bar \nu,\nu) $ is convex. Since the proof is similar for the other map, we omit it here. 
	
	Take $\nu_1, \nu_2\in\cP(\T^n)$. We claim that for $0\leq \lambda\leq 1$,
	\[
	d(\bar \nu,\lambda \nu_1+(1-\lambda )\nu_2) \leq
	\lambda d(\bar \nu,\nu_1)
	+
	(1-\lambda ) d(\bar \nu,\nu_2).
	\]
	It is enough to consider the case where both terms in the right-hand side of
	the preceding expression are finite. 
	Hence, let $t, s>0$ with $t<s$ so that  
	$\cH(\bar \nu,\nu_1; 0,t)\not=\emptyset$ and $\cH( \bar \nu,\nu_2; 0,s)\not=\emptyset$. 
	Let 
	$\gam_1\in \cH( \bar \nu, \nu_1; 0,t)$ and $\gam_2\in \cH( \bar \nu, \nu_2; 0,s)$.

	Since $ \bar \nu$ is a projected Mather measure, we can extend $\gam_1$ to $\bar \gam_1\in\cH( \bar \nu, \nu_1; 0,s)$ 
by a similar argument to the proof of Lemma \ref{lem:Mather-holonomic}. 
	Indeed, let $\mu$ be a Mather measure such that $ \bar \nu=\proj_{\T^n} \mu$.
Let
	\[
	d\bar \gam_1(x,q,r):=
	\begin{cases}
	d \mu(x,q,r) \qquad &\text{ for } r \in [0,s-t],\\
	d \gam_1(x,q,r-(s-t)) \qquad &\text{ for } r \in [s-t,s].
	\end{cases}
	\]
	By abuse of notation, we identify $\bar \gam_1$ as $\gam_1$.
	Let
	$\gam:=\lam\gam_1+(1-\lam)\gam_2$ for $\lam\in[0,1]$.
	As we show next, 
	\[
	\gam\in\cH(\bar \nu, \lam\nu_1+(1-\lam)\nu_2; 0,s). 
	\]
To verify the preceding claim, fix $\varphi\in C^2(\T^n\times[0,s])$, 
	\begin{align*}
	&  
	\int_{\T^n\times\R^n\times[0,s]}
	\left(\varphi_t-a(x)\Del \varphi+q\cdot D\varphi\right)\,d(\lam\gam_1+(1-\lam)\gam_2) \\
	=\,&  
	\lam\left(
	\int_{\T^n\times\R^n\times[0,s]}
	\left(\varphi_t-a(x)\Del \varphi+q\cdot D\varphi \right)\,d\gam_1
	\right)\\
	&
	+
	(1-\lam)
	\left(
	\int_{\T^n\times\R^n\times[0,s]}
	\left(\varphi_t-a(x)\Del \varphi+q\cdot D\varphi \right)\,d\gam_2
	\right)\\
	=\,&  
	\lam\left(
	\int_{\T^n}\varphi(z,s)\,d\nu_1(z)
	-\int_{\T^n}\varphi(z,0)\,d \bar \nu(z)  
	\right)\\
	&
	+(1-\lam)\left(
	\int_{\T^n}\varphi(z,s)\,d\nu_2(z)
	-\int_{\T^n}\varphi(z,0)\,d \bar \nu(z)  
	\right)\\
	=\,&  
	\int_{\T^n}\varphi(z,s)\,d(\lam\nu_1+(1-\lam)\nu_2)(z)
	-\int_{\T^n}\varphi(z,0)\,d \bar \nu(z). 
	\end{align*}
	
	Thus, we have 
	\begin{align*}
	&d( \bar \nu,\lam\nu_1+(1-\lam)\nu_2)
	\le\,  
	\lam \int_{\T^n\times\R^n\times[0,s]}L\,d\gam_1
	+(1-\lam)\int_{\T^n\times\R^n\times[0,s]}L\,d\gam_2\\
	&=\,  
	\lam \int_{\T^n\times\R^n\times[0,s-t]}L\,d\mu
	+\lam \int_{\T^n\times\R^n\times[s-t,s]}L\,d\gam_1
	+(1-\lam)\int_{\T^n\times\R^n\times[0,s]}L\,d\gam_2\\
	&=\,  
	\lam \int_{\T^n\times\R^n\times[0,t]}L\,d\gam_1
	+(1-\lam)\int_{\T^n\times\R^n\times[0,s]}L\,d\gam_2	
	\end{align*}
	because of \eqref{eq:erg-const}. 
	Taking the infima on 
	$\gam_1\in\cH(\bar \nu,\nu_1;0,t)$ for $t>0$ 
	and $\gam_2\in\cH( \bar \nu,\nu_2;0,s)$ for  $s>0$, 
	respectively, 
	we obtain 
	\[
	d( \bar \nu,\lam\nu_1+(1-\lam)\nu_2)\le 
	\lam 
	d( \bar \nu,\nu_1)+(1-\lam)d( \bar \nu,\nu_2). 
\qedhere	\]
\end{proof}

We end this section by establishing the improved convergence result in Proposition \ref{conv}. 
This result, which is of independent interest, is crucial for the proof of the duality formula for the Ma\~n\'e critical potential. 
\begin{proof}[Proof of Proposition \ref{conv}]
	We establish the claim by contradiction. Accordingly, we assume that 
	there exists  a sequence $\{u_0^k\} \subset C(\T^n)$ with $\min_{\T^n}u_0^k=0$ and a positive number, $\ep>0$, 
	such that for all $k\in \N$ there exists $t_k>k$ with
	\[
	\|u^k(\cdot,t_k)-u_\infty^k\|_{L^\infty(\T^n)}>\ep.
	\]
	Because of Assumption \ref{A4}, 
	extracting a subsequence if
	necessary,  we may assume that  $u^k(\cdot,1)\to u(\cdot,1)$ uniformly on $\T^n$ as $k \to \infty$, for some function $u(\cdot,1) \in C(\T^n)$. Let $u_\infty$ be the large time limit of the solution  to \eqref{eq:1} with initial data $u(\cdot,1)$.

	Because $u^k(\cdot,1)\to u(\cdot,1)$ uniformly on $\T^n$, we have $u_\infty^k\to u_\infty$ uniformly on $\T^n$.
	By choosing $k$ large enough, we have
	$\|u^k(\cdot,t)- u(\cdot,t)\|_{L^\infty(\T^n)}\leq \frac \ep 3$ for all $t \geq 1$, and $\|u_\infty^k- u_\infty\|_{L^\infty(\T^n)}\leq \frac \ep 3$.
	Then,
	\begin{align*}
	&\|u(\cdot,t_k)-u_\infty\|_{L^\infty(\T^n)}\\
	\geq \ &
	- \|u(\cdot,t_k)-u^k(\cdot,t_k)\|_{L^\infty(\T^n)}
	+
	\|u^k(\cdot,t_k)-u^k_\infty\|_{L^\infty(\T^n)}
	-
	\|u_\infty^k- u_\infty\|_{L^\infty(\T^n)} \\
	\geq \ &\frac \ep 3, 
	\end{align*}
	which contradicts the uniform convergence of $u(\cdot,t)$ to $u_\infty$ in $\T^n$ as $ t\to \infty$.
\end{proof}

%%%%%%%%%%%%%%

\section{Duality formulas}
\label{rdualty}

In this section, 
we use the Legendre-Fenchel-Rockafellar theorem to give dual representations
for $h_t$  and $d$.

\subsection{Duality}\label{subsec:dual}

To use the Legendre-Fenchel-Rockafellar theorem, we need to recall the definition of Legendre transform
on a locally convex topological vector space. 
Let $E$
be a locally convex topological vector space with dual
$E'$ and 
 duality pairing 
$(\cdot,\cdot)$.
Consider a convex function, 
$f:E\rightarrow (-\infty,+\infty]$.
The
{\em Legendre-Fenchel transform}, $f^*:E'\rightarrow [-\infty,+\infty]$,
of $f$ is
\[
f^*(y)=\sup_{x\in E} \bigl((x,y)-f(x)\bigr) \quad \text{ for $y\in E'$.}
\]
Let $\Omega= \Tt^n\times \Rr^n\times [0,t]$ for $t>0$ fixed.
Let $m$ be as in Assumption \ref{A3}
and choose $\zeta$ with $1<\zeta<m'$.
Let $\Uu$ be the set
%be the set of Radon measures in $\Omega$ with weight 
%$\gamma$, that is, 
\[
\Uu=\left\{\mu\in \cR(\Omega) \,:\,
\int_\Omega (1+|q|^\zeta) \, d|\mu|<\infty\right\}.
\]
The set $\Uu$ is the dual of %the set $C_{\zeta}(\Omega)$ defined by 
\begin{align*}
%\label{qjo} 
C_{\zeta}(\Omega)
&:= \Big\{\phi\in C(\Omega) \,:\, 
\|\phi\|_\zeta:=\sup_{\Omega} \left|\frac{\phi(x,q,s)}{1+|q|^\zeta}\right|<\infty,\\
&\qquad 
\lim_{|q|\rightarrow \infty}\frac{\phi(x,q,s)}{1+|q|^\zeta} = 0 \quad \text{ uniformly for}\ (x,s)\in \Tt^n\times [0,t]
\Big\}.
\end{align*}
Let
\[
\Uu_1=\left\{\mu\in\Uu \,:\, \mu\geq 0, \ \int_\Omega d\mu = t\right\}.
\]

Next, for $q\in \Rr^n$, define the following linear operator on $C^2( \Tt^n\times [0,t])$
\[
A^q\varphi=	\varphi_t -a(x)\Del \varphi +q\cdot D\varphi.
\]
Note that $A^q$ can be regarded as a bounded linear mapping from  $C^2( \Tt^n\times [0,t])$
to $C_\zeta(\Omega)$.
Let $B:C( \Tt^n\times [0,t]) \to \Rr$ be a bounded linear operator.
Define
\[
\Uu_2=\cl\left\{\mu\in \Uu:\int_\Omega A^q\varphi \, d\mu(x,q,s)=B\varphi,  \,
\forall \varphi\in C^2(\Tt^n\times [0,t])\right\},
\]
where the closure, $\cl$, is taken with respect to the weak
topology in $\Uu$. Moreover, 
we assume that there exists $\bar \mu\in \Uu_2$; that is, $ \Uu_2$ is non-empty.

For $\phi\in C_{\zeta}(\Omega)$, let
\begin{equation}
\label{hdef}
f(\phi)=t \sup_{(x,q,s)\in \Omega}
(\phi(x,q,s)-L(x,q)).
\end{equation}
Because $f$
is the supremum of convex functions, it is also a convex function.
Moreover,  because
	\[
\lim_{|q|\to \infty}\frac{L(x,q)}{1+|q|^\zeta}=+\infty
\]
for all $x\in \Tt^n$, $f$
is also continuous with respect to the uniform convergence
in $C_{\zeta}(\Omega)$.

Let
\[
\Cg=\cl\left\{\phi\in C_\zeta(\Omega)\,:\,\phi(x,q,s)=A^q\varphi(x,s) \ \text{for some} \ \varphi\in  C^2(\Tt^n\times[0,t])\right\},
\]
where  $\cl$ denotes the closure in $C_{\zeta}$.
Because
$A^q$ is a bounded linear mapping from  $C^2( \Tt^n\times [0,t])$
to $C_\zeta(\Omega)$, $\Cg$
is a closed convex set.

Fix $\bar \mu \in \Uu_2$ and let $g:C(\Omega)\to\R\cup\{+\infty\}$ be
\begin{equation}
\label{gdef}
g(\phi)=
\begin{cases}
-\int_{\Omega}  \phi \,d\bar \mu \quad&\text{ if } \phi\in\Cg,\\
+\infty&\text{ otherwise}.
\end{cases}
\end{equation}
Because $\Cg$ is a closed convex set, $g$ is convex and
lower semicontinuous. Furthermore, for $\phi=A^q \varphi$, we have 
$\int_{\Omega} \phi \, d\bar \mu=B\varphi $, according to the definition of $\Uu_2$.

Next, we address the Legendre transform of $f$. We recall that 
\[
f^*(\mu)=\sup_{\phi\in C_{\zeta}(\Omega)} \left(\int_{\Omega} \phi \,d\mu - f(\phi)\right).
\]
We first give two elementary results on $f^*$. 
\begin{lem}
	\label{ll1}
Suppose that Assumptions \ref{A1}-\ref{A3} hold. 	
Let $f$ be as in \eqref{hdef}. Let $\mu\in \Uu$. 
If $\mu\not \geq 0$ then $f^*(\mu)=+\infty$.
\end{lem}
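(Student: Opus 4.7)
The plan is to exploit the definition of the Legendre transform $f^*$ by constructing a one-parameter family of test functions that drives $\int_\Omega \phi\, d\mu - f(\phi)$ to $+\infty$ while $f(\phi)$ stays uniformly bounded. Concretely, if $\mu \not\geq 0$, then $\mu$ is not a non-negative Radon measure on $\Omega$, so by a standard duality/approximation argument (using Lusin- or Riesz-type approximation and the fact that non-negative compactly supported continuous functions belong to $C_\zeta(\Omega)$) there exists $\psi \in C_\zeta(\Omega)$ with $\psi \geq 0$ on $\Omega$ and
\[
\int_\Omega \psi\, d\mu < 0.
\]

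Next, for $\lambda > 0$, I would test $f^*$ against $\phi_\lambda := -\lambda \psi \in C_\zeta(\Omega)$. Since $\phi_\lambda \leq 0$ on $\Omega$ and, by Assumption \ref{A3} together with \eqref{conj}, $L(x,q) \geq -C_0$ for all $(x,q) \in \T^n \times \R^n$, we get
\[
\phi_\lambda(x,q,s) - L(x,q) \leq -L(x,q) \leq C_0,
\]
which yields the uniform bound
\[
f(\phi_\lambda) = t \sup_{(x,q,s)\in \Omega}\bigl(\phi_\lambda(x,q,s) - L(x,q)\bigr) \leq t C_0.
\]

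Plugging $\phi_\lambda$ into the definition of $f^*$, we obtain
\[
f^*(\mu) \geq \int_\Omega \phi_\lambda\, d\mu - f(\phi_\lambda)
\geq -\lambda \int_\Omega \psi\, d\mu - t C_0.
\]
Since $\int_\Omega \psi\, d\mu < 0$, the right-hand side tends to $+\infty$ as $\lambda \to \infty$, so $f^*(\mu) = +\infty$, as claimed.

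The only step with any subtlety is the first one: producing a test function $\psi \in C_\zeta(\Omega)$, rather than merely a bounded Borel function, that detects the sign failure of $\mu$. This is the main (and only) obstacle, but it is a routine consequence of the fact that $C_c(\Omega) \subset C_\zeta(\Omega)$ and that Radon measures are determined by their action on $C_c(\Omega)$; decomposing $\mu = \mu^+ - \mu^-$ with $\mu^- \neq 0$ and approximating $\mathbf{1}_E$ for a set $E$ with $\mu^-(E) > 0$ by a non-negative continuous function produces the required $\psi$.
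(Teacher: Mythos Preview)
Your proof is correct and follows essentially the same approach as the paper: both exploit that if $\mu\not\geq 0$ one can find nonpositive test functions $\phi_n\in C_\zeta(\Omega)$ with $\int_\Omega \phi_n\,d\mu\to+\infty$, while $f(\phi_n)$ stays bounded above since $\phi_n\leq 0$ and $L$ is bounded below. Your version is slightly more explicit in that you produce a single $\psi\geq 0$ with $\int\psi\,d\mu<0$ and take $\phi_\lambda=-\lambda\psi$, whereas the paper simply asserts the existence of such a sequence.
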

\begin{proof}
	If $\mu\not \geq 0$, there exists a  sequence of functions
	$\phi_n\in C_\zeta(\Omega)$ with $\phi_n\leq 0$ such that
	$$
	\int_\Omega \phi_n\, d\mu\rightarrow +\infty.
	$$
	Therefore, because
	$$
	f(\phi_n)=
	t \sup_{(x,q,s)\in \Omega} \left[\phi_n(x,q,s)-L(x,q)\right]\leq -t L(0,0)\leq C,
	$$
	we have
	\[  
	f^*(\mu)\geq\lim_{n \to \infty} 	\int_\Omega \phi_n \, d\mu  -f(\phi_n)\ge 
	\lim_{n \to \infty} 	\int_\Omega \phi_n \, d\mu  -C =+\infty.
	\qedhere
	\]
\end{proof}
Next, we give a lower bound for $f^*(\mu)$ for non-negative $\mu$.
\begin{lem}
	\label{ll2}
	Suppose that Assumptions \ref{A1}-\ref{A3} hold. 
	Let $f$ be as in \eqref{hdef}. Let $\mu\in \Uu$.
	If $\mu\geq 0$ then
	$$
	f^{*}(\mu)\geq \int_\Omega L \, d\mu +\sup_{\psi\in C_{\zeta}(\Omega)}\left(
	\int_{\Omega}\psi \, d\mu-t \sup_{\Omega} \psi
	\right).
	$$
\end{lem}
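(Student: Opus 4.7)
The plan is to lower-bound $f^*(\mu)$ by plugging suitable test functions into its definition. The natural candidate $\phi=L+\psi$ would give the result in one line, since $f(L+\psi)=t\sup_\Omega\psi$ and $\int_\Omega(L+\psi)\,d\mu=\int_\Omega L\,d\mu+\int_\Omega\psi\,d\mu$. However, $L$ grows like $|q|^{m'}$ while $C_\zeta(\Omega)$ only accommodates growth $|q|^\zeta$ with $\zeta<m'$, so $L+\psi\notin C_\zeta(\Omega)$ in general. The main technical point is therefore to replace $L$ by an admissible truncation and pass to the limit via monotone convergence (which is available because $\mu\ge 0$ and $L$ is bounded below).

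\textbf{Step 1 (Truncation).} Fix $\psi\in C_\zeta(\Omega)$ and, for each $n\in\N$, set
\[
L_n(x,q):=\min\{L(x,q),n\}.
\]
Under Assumptions \ref{A1}--\ref{A3}, $L$ is continuous on $\Tt^n\times\Rr^n$ and satisfies $L\ge -C_0$. Thus $L_n$ is continuous and $-C_0\le L_n\le n$, which implies $L_n(x,q)/(1+|q|^\zeta)\to 0$ uniformly in $(x,s)$ as $|q|\to\infty$. Hence $L_n\in C_\zeta(\Omega)$, and therefore $\phi_n:=L_n+\psi\in C_\zeta(\Omega)$.

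\textbf{Step 2 (Evaluating $f$ and the pairing).} Since $L_n\le L$ pointwise,
\[
f(\phi_n)=t\sup_{\Omega}(L_n+\psi-L)\le t\sup_{\Omega}\psi.
\]
Consequently,
\[
\int_\Omega \phi_n\,d\mu - f(\phi_n) \ge \int_\Omega L_n\,d\mu+\int_\Omega \psi\,d\mu - t\sup_\Omega \psi.
\]

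\textbf{Step 3 (Passing to the limit).} The sequence $\{L_n+C_0\}_{n\in\N}$ is non-negative and increases monotonically to $L+C_0$. Since $\mu\ge 0$ with finite mass (because $\mu\in\Uu$), monotone convergence yields
\[
\int_\Omega L_n\,d\mu \longrightarrow \int_\Omega L\,d\mu \in (-\infty,+\infty]
\]
as $n\to\infty$ (if the limit is $+\infty$ the desired inequality is trivial, so we may suppose it is finite). Combining with Step 2 gives
\[
f^*(\mu)\ge \sup_{n\in\N}\Bigl(\int_\Omega \phi_n\,d\mu-f(\phi_n)\Bigr)\ge \int_\Omega L\,d\mu+\int_\Omega \psi\,d\mu - t\sup_\Omega \psi.
\]

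\textbf{Step 4 (Conclusion).} Taking the supremum over $\psi\in C_\zeta(\Omega)$ on the right-hand side produces the stated bound. The main (mild) obstacle is verifying that the truncated $L_n$ lies in $C_\zeta(\Omega)$ and that monotone convergence applies; all other steps reduce to the direct definition of $f^*$ and $f$.
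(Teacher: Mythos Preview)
Your proof is correct and follows essentially the same approach as the paper: truncate $L$ to an increasing sequence $L_n\in C_\zeta(\Omega)$, use $L_n\le L$ to bound $f(L_n+\psi)\le t\sup_\Omega\psi$, and pass to the limit by monotone convergence. Your version is in fact slightly more explicit, since you specify $L_n=\min\{L,n\}$ and justify the monotone-convergence step by shifting by $C_0$, whereas the paper simply asserts the existence of such a sequence.
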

\begin{proof} 
	Let $L_n$ be an increasing sequence of   functions in $C_\zeta(\Omega)$ such that $L_n \to L$ pointwise.
	Any
	$\phi$ in $C_\zeta(\Omega)$
	can be written as
	$\phi=\psi+L_n$, for some $\psi$ in $C_\zeta(\Omega)$.
	Therefore,
	\begin{multline*}
	\sup_{\phi\in C_\zeta(\Omega)}
	\left(\int_{\Omega}\phi \, d\mu-f(\phi)\right)=
	\sup_{\psi\in C_\zeta(\Omega)}
	\left(
	\int_{\Omega} L_n \, d\mu+\int_{\Omega} \psi \, d\mu-t \sup_{\Omega} (L_n+\psi-L)
	\right).
	\end{multline*}
	Because
	$L_n\leq L$,
	we have
	\begin{align*}
	\sup_{\phi\in C_\zeta(\Omega)}
	\left(\int_{\Omega} \phi \, d\mu-f(\phi)\right)& \geq 
	\sup_{\psi\in C_\zeta(\Omega)}
	\left(
	\int_{\Omega}  L_n \, d\mu+\int_{\Omega}  \psi \, d\mu-t \sup_{\Omega} \psi
	\right)\\
	&=\int_{\Omega}  L_n \, d\mu +
	\sup _{\psi\in C_\zeta(\Omega)} \left(
	\int_{\Omega}  \psi \, d\mu - t \sup_{\Omega} \psi
	\right)
	\end{align*}
	By the monotone convergence theorem, 
	\[
	\int_{\Omega} L_n \, d\mu\rightarrow \int_{\Omega} L \, d\mu
	\quad\text{as} \ n\to\infty. 
	\]
	Thus,
	\begin{align*}
f^\ast(\mu)=\sup_{\phi\in C_\zeta(\Omega)}
	\left(\int_{\Omega}\phi \, d\mu-f(\phi)\right)  \geq 
	\int_{\Omega} L\, d\mu+\sup_{\psi\in C_\zeta(\Omega)}
	\left(\int_{\Omega} \psi \, d\mu-t\sup \psi
	\right),
	\end{align*}
	as required.
\end{proof}

Using the two preceding lemmas, we now compute the Legendre transform of 
$f$.
\begin{pro}
\label{fstarp}
Suppose that Assumptions \ref{A1}-\ref{A3} hold. 
Let $f$ be as in \eqref{hdef}. Let $\mu\in \Uu$. Then, 
\begin{equation}
\label{hstar}
	f^*(\mu)=
	\begin{cases}
	\int_{\Omega} L\, d\mu \quad &\text{ if } \quad \mu\in\Uu_1,\\
	+\infty  &\text{ otherwise.}
	\end{cases}
\end{equation}
\end{pro}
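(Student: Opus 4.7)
The plan is to combine Lemmas \ref{ll1} and \ref{ll2} with a short case analysis on the total mass of $\mu$, and then establish the matching upper bound directly from the definition of $f^*$.

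First, Lemma \ref{ll1} immediately handles the case $\mu \not\geq 0$, giving $f^*(\mu) = +\infty$, which matches \eqref{hstar} since such $\mu \notin \Uu_1$. So from now on I would assume $\mu \geq 0$ and split according to whether $\int_\Omega d\mu = t$. By Lemma \ref{ll2},
\[
f^*(\mu) \geq \int_\Omega L\, d\mu + \sup_{\psi\in C_\zeta(\Omega)}\left(\int_\Omega \psi\, d\mu - t \sup_\Omega \psi\right).
\]
The key observation is that constant functions lie in $C_\zeta(\Omega)$ (since $c/(1+|q|^\zeta)\to 0$ as $|q|\to\infty$). Taking $\psi \equiv c$ yields $\int_\Omega \psi\, d\mu - t\sup_\Omega\psi = c(\int_\Omega d\mu - t)$. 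Thus if $\int_\Omega d\mu \neq t$, sending $c\to\pm\infty$ with the appropriate sign forces the supremum to be $+\infty$, so $f^*(\mu)=+\infty$ as required.

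If instead $\int_\Omega d\mu = t$, then for any $\psi\in C_\zeta(\Omega)$, non-negativity of $\mu$ gives
\[
\int_\Omega \psi\, d\mu - t\sup_\Omega \psi \leq (\sup_\Omega \psi)\int_\Omega d\mu - t\sup_\Omega \psi = 0,
\]
with equality at $\psi\equiv 0$. Hence the extra supremum vanishes and Lemma \ref{ll2} yields the lower bound $f^*(\mu)\geq \int_\Omega L\, d\mu$.

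It remains to prove the matching upper bound $f^*(\mu) \leq \int_\Omega L\, d\mu$ when $\mu\in\Uu_1$. For any $\phi\in C_\zeta(\Omega)$, the same mean-value argument applied to $\phi - L$ gives
\[
\int_\Omega (\phi - L)\, d\mu \leq t \sup_\Omega(\phi - L) = f(\phi),
\]
so $\int_\Omega \phi\, d\mu - f(\phi) \leq \int_\Omega L\, d\mu$; taking the supremum over $\phi$ gives the desired bound. (A subtle point: $L$ itself need not lie in $C_\zeta(\Omega)$ since $L$ grows like $|q|^{m'}$ with $m'>\zeta$, so the inequality $\int(\phi - L)d\mu \leq t\sup(\phi - L)$ must be applied with $\phi\in C_\zeta(\Omega)$ and then the $L$ term reinterpreted via $\int L\, d\mu < \infty$ automatically when this bound is finite; the integrability of $L$ against $\mu$ follows from the growth condition in Assumption \ref{A3} together with $\mu\in\Uu$.) I expect the only mildly delicate step is justifying integrability and the manipulation $\int\phi\, d\mu - t\sup(\phi - L) \leq \int L\, d\mu$ rigorously, which can be done by approximating $L$ from below by an increasing sequence $L_n\in C_\zeta(\Omega)$ (as in Lemma \ref{ll2}) and passing to the limit by monotone convergence. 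Combining the two bounds yields \eqref{hstar}.
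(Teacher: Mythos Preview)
Your proof follows essentially the same route as the paper's: Lemma~\ref{ll1} for $\mu\not\geq 0$, constants $\psi\equiv c$ in the bound from Lemma~\ref{ll2} to force $f^*(\mu)=+\infty$ when $\int_\Omega d\mu\neq t$, the choice $\psi=0$ for the lower bound when $\mu\in\Uu_1$, and the direct inequality $\int_\Omega(\phi-L)\,d\mu\leq t\sup_\Omega(\phi-L)=f(\phi)$ for the upper bound. This is exactly what the paper does.

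One small correction to your parenthetical: the claim that ``the integrability of $L$ against $\mu$ follows from the growth condition in Assumption~\ref{A3} together with $\mu\in\Uu$'' is false, since $L$ grows like $|q|^{m'}$ with $m'>\zeta$ while $\mu\in\Uu$ only controls $\int|q|^\zeta\,d|\mu|$. Fortunately this does not matter: if $\int_\Omega L\,d\mu=+\infty$ then the lower bound from Lemma~\ref{ll2} already gives $f^*(\mu)=+\infty$, and the upper bound $f^*(\mu)\leq\int_\Omega L\,d\mu$ is vacuous. The formula~\eqref{hstar} is meant to allow $\int_\Omega L\,d\mu=+\infty$ for $\mu\in\Uu_1$, so no separate integrability argument (or $L_n$ approximation) is needed here.
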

\begin{proof}
By Lemma \ref{ll1}, 
 if 
	$\mu$ is  not non-negative
	then $f^*(\mu)=\infty$.
Moreover, by Lemma \ref{ll2}, if $\int_{\Omega} L\,d\mu=+\infty$ then $f^*(\mu)=+\infty$.
Furthermore, if $\int_{\Omega} \, d\mu\neq t$ then
	$$
	\sup_{\psi\in C_\zeta(\Omega)}\left(
	\int_{\Omega}\psi \,d\mu-t  \sup_{\Omega} \psi\right)\geq
	\sup_{\alpha\in \Rr} \alpha\left(\int_{\Omega} \,d\mu-t\right)=+\infty,
	$$
	by choosing $\psi \equiv \alpha$ for $\alpha\in \Rr$. 
	Therefore, in this case, using also Lemma \ref{ll2},  we have $f^*(\mu)=+\infty$.
	
	When $\mu\geq 0$ and $\int_{\Omega}\, d\mu=t$, 
	%and $\sup_{\psi\in C_\zeta(\Omega)}\left(
	%\int_{\Omega}\psi \,d\mu-t  \sup_{\Omega} \psi\right)\le0<+\infty$, 
	Lemma \ref{ll2} implies
	$$
	f^*(\mu)\geq \int_{\Omega} L\,d\mu,
	$$
	by choosing $\psi=0$.
	
	Additionally, for any $\phi\in C_\zeta(\Omega)$,
	$$
	\int_{\Omega} (\phi-L)\,d\mu\leq t\sup_{\Omega} (\phi-L)=f(\phi).
	$$
	Therefore,
	$$
	f^\ast(\mu)=\sup_{\phi\in C_\zeta(\Omega)}
	\left(\int_{\Omega}\phi \,d\mu-f(\phi)\right)\leq \int_{\Omega} L\,d\mu.
	$$
Accordingly, we obtain \eqref{hstar}.
\end{proof}

\begin{pro}
\label{gstarp}
Suppose that $\Uu_2\neq\emptyset$.
Let $g$ be as in \eqref{gdef}. Let $\mu\in \Uu$. Then
		$$
	g^*(\mu)=
	\begin{cases}
	0 \quad &\text{ if } \quad -\mu\in\Uu_2\\
	+\infty  &\text{ otherwise.}
	\end{cases}
	$$
\end{pro}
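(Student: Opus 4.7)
The plan is to unwind the definition of $g^{*}$ and exploit the linear structure of $\Cg$. From
$g^{*}(\mu)=\sup_{\phi\in C_{\zeta}(\Omega)}\bigl(\int_{\Omega}\phi\, d\mu-g(\phi)\bigr)$,
the fact that $g(\phi)=+\infty$ outside $\Cg$ immediately reduces the problem to
\[
g^{*}(\mu)=\sup_{\phi\in\Cg}\int_{\Omega}\phi\,d(\mu+\bar\mu).
\]

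My key observation is that $\Cg$ is a \emph{linear} subspace of $C_{\zeta}(\Omega)$: since $A^{q}$ is linear in $\varphi$, the set $\{A^{q}\varphi:\varphi\in C^{2}(\T^{n}\times[0,t])\}$ is a vector subspace, and the $C_{\zeta}$-closure of a vector subspace is again a vector subspace. Consequently, the supremum of the linear functional $\phi\mapsto\int_{\Omega}\phi\,d(\mu+\bar\mu)$ over $\Cg$ is dichotomous: it equals $0$ precisely when this functional vanishes identically on $\Cg$, and $+\infty$ otherwise. Because $\mu+\bar\mu\in\Uu$ acts as a bounded linear functional on $C_{\zeta}(\Omega)$, the functional is $C_{\zeta}$-continuous, so vanishing on $\Cg$ is equivalent to vanishing on the dense subset $\{A^{q}\varphi:\varphi\in C^{2}\}$.

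To convert this vanishing condition into the desired condition $-\mu\in\Uu_{2}$, I will use that $\bar\mu\in\Uu_{2}$ satisfies $\int_{\Omega}A^{q}\varphi\,d\bar\mu=B\varphi$ for every $\varphi\in C^{2}$ --- a property that persists in the weak closure because each $A^{q}\varphi$ lies in $C_{\zeta}(\Omega)$, as it is controlled by $C(1+|q|)\leq C(1+|q|^{\zeta})$ since $\zeta>1$. Substituting, the vanishing condition rewrites as
\[
\int_{\Omega}A^{q}\varphi\,d(-\mu)=B\varphi \qquad \text{for all } \varphi\in C^{2}(\T^{n}\times[0,t]),
\]
which is exactly the constraint whose weak closure defines $\Uu_{2}$. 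Since this family of constraints is weakly closed in $\Uu$ (each constraint is a $C_{\zeta}$-continuous linear equation on $\Uu$), the constraint set already coincides with $\Uu_{2}$. This yields $g^{*}(\mu)=0$ when $-\mu\in\Uu_{2}$ and $g^{*}(\mu)=+\infty$ otherwise, as claimed.

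The main subtlety I anticipate is not in the linear-algebraic dichotomy but in the topological bookkeeping: one must verify that $\mu\mapsto\int_{\Omega}A^{q}\varphi\,d\mu$ is weakly continuous on $\Uu$ for each fixed test $\varphi$, so that (i) the constraint defining $\Uu_{2}$ propagates to its weak closure, and (ii) the vanishing of the linear functional on $\Cg$ reduces to its vanishing on $\{A^{q}\varphi:\varphi\in C^{2}\}$. Both reductions rest on the same inclusion $A^{q}\varphi\in C_{\zeta}(\Omega)$, which follows from the smoothness of $\varphi$ and $a$ combined with the at most linear growth of $q\cdot D\varphi$ in $q$.
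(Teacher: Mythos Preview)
Your proof is correct and follows essentially the same route as the paper: reduce $g^{*}(\mu)$ to $\sup_{\phi\in\Cg}\int_{\Omega}\phi\,d(\mu+\bar\mu)$, use that $\Cg$ is a linear subspace so the supremum is either $0$ or $+\infty$, and identify the vanishing condition with $-\mu\in\Uu_{2}$. Your write-up is in fact slightly more careful than the paper's on the topological point that the constraint set defining $\Uu_{2}$ is already weakly closed (so the closure in its definition is redundant), which the paper uses implicitly when it asserts that $-\mu\in\Uu_{2}$ implies $\int_{\Omega}A^{q}\varphi\,d(-\mu)=B\varphi$ for all $\varphi$.
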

\begin{proof}
Let $\bar \mu\in \Uu_2$; that is, $\mu$ satisfies  
\[
\int_\Omega A^q\varphi \,d\bar  \mu=B\varphi, 
\]
for all $\varphi\in C^2(\Tt^n\times [0,t])$.
Given a measure $\mu$ with  $-\mu\in \Uu_2$, 
decompose $\mu=-\bar \mu+\hat \mu$
with $\hat{\mu}\in\cR(\Omega)$ satisfying 
\begin{equation}\label{eq:Aq}
\int_\Omega A^q\varphi \, d\hat \mu=0 \quad
\text{for all} \  \varphi\in C^2(\Tt^n\times [0,t]). 
\end{equation}
By continuity, we obtain
\[
\int_\Omega \phi \, d\hat \mu=0\quad
\text{for all} \ \phi \in \Cg;
\]
that is,  
\[
\int_\Omega \phi \, d\mu+\int_\Omega \phi \, d\bar \mu=0
\]
for all $\phi \in \Cg$.
%}
Furthermore, if $-\mu\not\in\Uu_2$, 
then there exists $\varphi\in C^2(\T^n\times[0,t])$ such that 
$\int_{\Omega}A^q\varphi\,d(-\mu)\not= B\varphi$. 
By \eqref{eq:Aq}, 
\[
\int_\Omega A^q\varphi \, d(\mu+\hat \mu)=0 \quad
\]
which implies 
$$
\int_\Omega \hat \phi \, d(\mu+\bar \mu)\neq 0 
\quad\text{for some} \ \hat{\phi}\in\cC. 
$$
Thus
$$
g^{*}(\mu)=\sup_{\phi\in\Cg}\left(\int_\Omega \phi \, d\mu+\int_\Omega \phi \, d\bar \mu\right)=
\begin{cases}
0\quad&\text{ if }-\mu\in\Uu_2\\
+\infty&\text{ otherwise}.
\end{cases}
$$
\end{proof}

\begin{teo}
\label{t1}
Suppose that Assumptions \ref{A1}-\ref{A3} hold. Suppose that $\Uu_2\neq\emptyset$.
Then,	
\[
\max_{\mu\in \Uu_1\cap \Uu_2}
\left(-\int_{\Omega} L\, d\mu\right)
=\inf_{\varphi\in C^2(\Tt^n\times [0,t])}  
\left[
-B\varphi+
t\sup_{(x,s)\in\T^n\times[0,t]} \left( \varphi_t -a(x) \Del \varphi + H(x,D\varphi) \right)
\right].
\]	
\end{teo}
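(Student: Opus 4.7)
The statement is a Fenchel--Rockafellar duality identity, and the plan is to apply the Legendre--Fenchel--Rockafellar theorem to the convex functionals $f$ and $g$ defined in \eqref{hdef} and \eqref{gdef} on the locally convex space $E=C_{\zeta}(\Omega)$ with dual $E'=\Uu$. The Fenchel--Rockafellar theorem states that, provided $f$ is convex and continuous at some point where $g$ is finite (and $g$ is convex, proper, lower semicontinuous), one has
\[
\inf_{\phi\in E}\bigl[f(\phi)+g(\phi)\bigr]=\max_{\mu\in E'}\bigl[-f^{\ast}(-\mu)-g^{\ast}(\mu)\bigr].
\]
Here $f$ is convex and continuous on all of $C_{\zeta}(\Omega)$ (the argument after \eqref{hdef}), while $g$ is convex, proper, and lower semicontinuous because $\Cg$ is a closed convex subset of $C_{\zeta}(\Omega)$ and $g$ equals a continuous linear functional on $\Cg$. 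The constraint qualification is satisfied provided $\Cg\neq\emptyset$, which holds trivially (take $\varphi\equiv 0$), so $f$ is finite and continuous at a point of $\mathrm{dom}\, g$.

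Next, I would identify the right-hand side of the theorem with $\inf_{\phi}[f(\phi)+g(\phi)]$. By the definition of $\Cg$, any $\phi$ in the set on which $g$ is finite is a limit (in $C_{\zeta}$) of elements of the form $A^{q}\varphi$ with $\varphi\in C^{2}(\T^{n}\times[0,t])$. For such $\phi=A^{q}\varphi$, one computes
\[
\sup_{(x,q,s)\in\Omega}\bigl(A^{q}\varphi(x,s)-L(x,q)\bigr)
=\sup_{(x,s)\in\T^{n}\times[0,t]}\sup_{q\in\R^{n}}\bigl(\varphi_{t}-a(x)\Del\varphi+q\cdot D\varphi-L(x,q)\bigr),
\]
and the inner supremum equals $H(x,D\varphi)$ by the Legendre transform \eqref{legendre}. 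At the same time, $\int_{\Omega}\phi\,d\bar\mu=B\varphi$ by the defining property of $\bar\mu\in\Uu_{2}$, so $g(\phi)=-B\varphi$. Hence
\[
f(\phi)+g(\phi)=t\sup_{(x,s)}\bigl(\varphi_{t}-a(x)\Del\varphi+H(x,D\varphi)\bigr)-B\varphi,
\]
and taking the infimum over $\varphi\in C^{2}(\T^{n}\times[0,t])$ recovers the right-hand side in the statement. One must check that restricting the infimum to $\phi$ of the form $A^{q}\varphi$ (rather than arbitrary $\phi\in\Cg$) does not decrease the infimum; this follows from continuity of $f$ in the $C_{\zeta}$ topology together with the density of $\{A^{q}\varphi\}$ in $\Cg$.

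For the left-hand side, I would invoke Propositions \ref{fstarp} and \ref{gstarp}. Writing $\nu:=-\mu$, the quantity $-f^{\ast}(-\mu)-g^{\ast}(\mu)$ equals $-\int_{\Omega}L\,d\nu$ when $\nu\in\Uu_{1}\cap\Uu_{2}$ and $-\infty$ otherwise. Substituting into the duality identity yields
\[
\inf_{\varphi\in C^{2}(\T^{n}\times[0,t])}\bigl[-B\varphi+t\sup_{(x,s)}(\varphi_{t}-a(x)\Del\varphi+H(x,D\varphi))\bigr]
=\max_{\nu\in\Uu_{1}\cap\Uu_{2}}\Bigl(-\int_{\Omega}L\,d\nu\Bigr),
\]
which is exactly the claim. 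The attainment of the maximum on the right is precisely the ``max'' part of the Fenchel--Rockafellar conclusion under the continuity hypothesis on $f$.

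The main obstacle is verifying the hypotheses of the Fenchel--Rockafellar theorem in this infinite-dimensional setting. The delicate point is to ensure that $\Cg$ is indeed closed in $C_{\zeta}(\Omega)$ (which is why the definition of $\Cg$ includes a closure) and that the minimization problem is well-posed with the correct dual space pairing given by $\Uu$. One must also use the hypothesis $\Uu_{2}\neq\emptyset$ to ensure $\mathrm{dom}\, g\neq\emptyset$; otherwise $g\equiv+\infty$ and the duality is vacuous. Once these are verified, the conclusion follows by combining the two elementary computations of $f^{\ast}$ and $g^{\ast}$ carried out in Propositions \ref{fstarp} and \ref{gstarp}.
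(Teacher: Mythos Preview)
Your proposal is correct and follows essentially the same route as the paper: apply the Fenchel--Rockafellar theorem to $f$ and $g$ on $C_\zeta(\Omega)$, invoke Propositions \ref{fstarp} and \ref{gstarp} to identify the dual side, and compute $f(A^q\varphi)+g(A^q\varphi)$ via the Legendre transform to recover the primal side. If anything, your treatment of the closure in $\Cg$ (noting that density of $\{A^q\varphi\}$ together with continuity of $f$ lets one restrict the infimum to $\varphi\in C^2$) is slightly more careful than the paper, which passes over this point silently.
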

\begin{proof}
Let $f$ and $g$ be as in \eqref{hdef} and \eqref{gdef}. 
We have that $f, g :C_\zeta(\Omega)\rightarrow (-\infty,+\infty]$ 
are convex functions.
The set 
$g<+\infty$  is non-empty, and, in this set, 
$f$ is a continuous function. Therefore, 
by the Fenchel-Legendre-Rockafellar theorem \cite{Villanithebook},
\begin{equation*}
%\label{ideq}
\max_{\mu\in \Uu} \left[-f^*(\mu)-g^*(-\mu)\right]=\inf_{\phi\in C_\zeta(\Omega)} \left[f(\phi)+g(\phi)\right]. 
\end{equation*}

By Propositions \ref{fstarp} and \ref{gstarp}, we have 
\[
\max_{\mu\in \Uu} \left[-f^*(\mu)-g^*(-\mu)\right]
=
\max_{\mu\in \Uu_1\cap \Uu_2}
\left(-\int_{\Omega} L\, d\mu\right). 
\]
%Because by the convexity in Assumption \ref{A2}, we have
%\[
%
%\]
%
%
%Since $D_pH(x,\cdot), D_qL(x,\cdot):\R^n\to\R^n$ are $C^1$-diffeomorphism for all $x\in\T^n$, 
Therefore
\begin{align*}
&
\inf_{\phi\in C_\zeta(\Omega)} \left[f(\phi)+g(\phi)\right]
=
\inf_{\phi\in C_\zeta(\Omega)} 
\left[t\sup_{\Omega}(\phi-L)-\int_{\Omega}\phi\,d\overline{\mu}\right]\\ 
=&\,
\inf_{\varphi\in C_\zeta(\Omega)} 
\left[-B\varphi+t\sup_{\Omega}(A^{q}\varphi-L)\right]\\
=&\,
\inf_{\varphi\in C_\zeta(\Omega)} 
\left[-B\varphi+t\sup_{\Tt^n\times[0,t]}
\Big(\varphi_t -a(x)\Del \varphi +\sup_{q\in \Rr^n}\big(q\cdot D\varphi-L(x,q)\big)\Big)\right]\\
=&\,  
\inf_{\varphi\in C_\zeta(\Omega)} 
\left[-B\varphi+t\sup_{(x,s)\in\T^n\times[0,t]}(\varphi_t-a(x)\Del \varphi +H(x,D\varphi))\right],  
\end{align*}
which implies the conclusion. 
\end{proof}

\subsection{Convex duality for $h_t$}

We begin by applying Theorem \ref{t1} to obtain a preliminary dual formula 
for $h_t$. 

%{\color{blue}
%Hung: I think in proposition 5.6 we need for this $B\varphi$ that $\Uu_2$ is non empty. 
%For this we need probably to assume $\cH(\nu_0, \nu_1, 0, t)\neq \emptyset$. 
%}

\begin{pro}
Consider the setting of Problem \ref{P1}.
Suppose that Assumptions \ref{A1}-\ref{A3} hold. 	
Fix $\nu_0, \nu_1\in \cP(\T^n)$, $t>0$, and assume that  $\cH(\nu_0, \nu_1; 0, t)\neq \emptyset$. 
Then,
\begin{align*}
h_t(\nu_0,\nu_1)=
-\inf_{\varphi\in C^2(\Tt^n\times[0,t] )}  \sup_{(x,s)\in\T^n\times[0,t]} 
\Big[&
\int_{\Tt^n} \varphi(x,0)\,d\nu_0-\int_{\Tt^n} \varphi(x,t)\,d\nu_1+\\
&
t\left( \varphi_t(x,s) -a(x)\Del \varphi(x,s) + H(x,D\varphi(x,s)) \right)
\Big].
\end{align*}
	
\end{pro}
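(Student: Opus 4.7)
The plan is to derive this duality formula as a direct specialization of Theorem \ref{t1}, by making the right choices for the operator $B$ and the affine constraint set $\Uu_2$ so that $\Uu_1\cap\Uu_2$ recovers exactly $\cH(\nu_0,\nu_1;0,t)$. Concretely, I would set
\[
B\varphi := \int_{\T^n}\varphi(x,t)\,d\nu_1(x)-\int_{\T^n}\varphi(x,0)\,d\nu_0(x),
\]
so that the constraint $\int_\Omega A^q\varphi\,d\mu = B\varphi$ for all $\varphi\in C^2(\T^n\times[0,t])$ becomes precisely the defining identity of $\cH(\nu_0,\nu_1;0,t)$ from the introduction.

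Next, I would verify that the resulting $\Uu_1\cap\Uu_2$ equals $\cH(\nu_0,\nu_1;0,t)$. The nontrivial point is the total mass constraint $\int_\Omega d\mu=t$ built into $\Uu_1$. This is automatic for measures satisfying the holonomy relation: testing the holonomy identity against $\varphi(x,s)=s$ gives $\varphi_t\equiv 1$, $D\varphi=\Delta\varphi=0$, hence
\[
\int_\Omega 1\,d\gamma = \int_{\T^n} t\,d\nu_1 -\int_{\T^n} 0\,d\nu_0 = t,
\]
so any non-negative $\gamma$ satisfying the holonomy condition lies in $\Uu_1$. Conversely, every element of $\cH(\nu_0,\nu_1;0,t)$ lies in $\Uu$ by definition (it has the required $|q|^\zeta$-integrability and is non-negative), so membership in $\Uu_2$ before taking closure is clear. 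The closure in $\Uu_2$ does not introduce spurious measures because $A^q\varphi\in C_\zeta(\Omega)$ for $\varphi\in C^2(\T^n\times[0,t])$ (it has at most linear growth in $q$, and $\zeta>1$), so the holonomy identity is preserved under weak convergence in $\Uu$. The hypothesis $\cH(\nu_0,\nu_1;0,t)\neq\emptyset$ supplies the required $\bar\mu\in\Uu_2$.

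Having identified $\Uu_1\cap\Uu_2=\cH(\nu_0,\nu_1;0,t)$, the left-hand side of Theorem \ref{t1} becomes
\[
\max_{\gamma\in\cH(\nu_0,\nu_1;0,t)}\left(-\int_\Omega L\,d\gamma\right) = -\inf_{\gamma\in\cH(\nu_0,\nu_1;0,t)}\int_\Omega L\,d\gamma = -h_t(\nu_0,\nu_1).
\]
Substituting our choice of $B$ into the right-hand side of Theorem \ref{t1} yields
\[
-h_t(\nu_0,\nu_1) = \inf_{\varphi\in C^2(\T^n\times[0,t])}\left[\int_{\T^n}\varphi(x,0)\,d\nu_0-\int_{\T^n}\varphi(x,t)\,d\nu_1 + t\sup_{(x,s)}\bigl(\varphi_t-a(x)\Delta\varphi+H(x,D\varphi)\bigr)\right].
\]
Moving the boundary terms inside the supremum (they are independent of $(x,s)$) and negating gives exactly the stated formula.

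The main obstacle I anticipate is the closure step in the identification $\Uu_1\cap\Uu_2=\cH(\nu_0,\nu_1;0,t)$: one needs enough weak compactness in $\Uu$ so that weak limits of holonomic measures with total mass $t$ remain non-negative, inherit the mass constraint, and continue to satisfy the integration-by-parts identity. The integrability built into $\Uu$ (finite $(1+|q|^\zeta)$-integral) and the fact that $A^q\varphi\in C_\zeta(\Omega)$ handle this cleanly, but this is the point where care is required, because $L$ itself only belongs to $C_\zeta$ after truncation (as in the proof of Lemma \ref{ll2}). All remaining steps are routine bookkeeping once the specialization of Theorem \ref{t1} is in place.
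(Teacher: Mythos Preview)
Your proposal is correct and follows essentially the same approach as the paper: you specialize Theorem \ref{t1} by choosing $B\varphi = \int_{\T^n}\varphi(\cdot,t)\,d\nu_1 - \int_{\T^n}\varphi(\cdot,0)\,d\nu_0$, identify $\Uu_1\cap\Uu_2$ with $\cH(\nu_0,\nu_1;0,t)$, and read off the stated formula. The paper's own proof is in fact much terser---it simply asserts the identification $h_t(\nu_0,\nu_1)=\inf_{\mu\in\Uu_1\cap\Uu_2}\int_\Omega L\,d\mu$ and invokes Theorem \ref{t1}---so your additional remarks (testing against $\varphi(x,s)=s$ to recover the mass constraint, and the stability of the holonomy identity under weak closure because $A^q\varphi\in C_\zeta(\Omega)$) are useful details that the paper leaves implicit.
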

\begin{proof}
Given $\nu_0$ and $\nu_1$, 
we set
\begin{equation*}
%\label{Bformula}
B\varphi=\int_{\Tt^n}\varphi(x,t)\,d\nu_1-\int_{\Tt^n}\varphi(x,0)\,d\nu_0.
\end{equation*}
Because  $\cH(\nu_0, \nu_1; 0, t)\neq \emptyset$, we have $\Uu_2\neq\emptyset$.
Then
\[
h_t(\nu_0,\nu_1)=
\inf_{\mu\in \Uu_1\cap \Uu_2}
\int_{\Omega} L \, d\mu, 
\]
where $\Uu_1$ and  $\Uu_2$ are defined in Section \ref{subsec:dual}. The identity in the claim follows from
Theorem \ref{t1}. 
\end{proof}

As stated in Theorem \ref{thm:duality-h},
the preceding result can be further refined; it is enough to use as test functions
viscosity solutions to the Hamilton--Jacobi--Bellman equation in \eqref{eq:1}. This theorem
is proved as follows.

\begin{proof}[Proof of Theorem \ref{thm:duality-h}]
We denote by $m_t(\nu_0,\nu_1)$ the right-hand side of \eqref{eq:dual-ht}. 

Let $w$ be a viscosity solution to  \eqref{eq:1}  and consider 
the approximation $w^{\alpha}$ defined in \eqref{func:w-al}.
Then,
\begin{align*}
&\inf_{\varphi\in C^2(\Tt^n\times [0,t])}  \sup_{(x,s)} 
\Big[
\int_{\Tt^n} \varphi(x,0)\,d\nu_0-\int_{\Tt^n} \varphi(x,s)\,d\nu_1\\
&\qquad \qquad\qquad \qquad\quad+
t\left( \varphi_t(x,s) -a(x)\Del \varphi(x,s) + H(x,D\varphi(x,s)) \right)
\Big]
\\ \leq \, &
\sup_{(x,s)} 
\Big[
\int_{\Tt^n} w^{\alpha}(x,0)\,d\nu_0-\int_{\Tt^n} w^{\alpha}( x,t) \,d\nu_1+
t\left( w^{\alpha}_t -a(x)\Del w^{\alpha} + H(x,Dw^{\alpha}) \right)
\Big].
\\ \leq \, &
\left[\int_{\Tt^n} w^{\alpha}(x,0)\,d\nu_0-\int_{\Tt^n} w^{\alpha}( x,t) \,d\nu_1\right] + C \al^{1/2}t.
\end{align*}
Thus, letting $\al \to 0$, we conclude that 
\begin{align*}
&\inf_{\varphi\in C^2(\Tt^n\times [0,t])}  \sup_{(x,s)} 
\Big[
\int_{\Tt^n} \varphi(x,0)\,d\nu_0-\int_{\Tt^n} \varphi(x,s)\,d\nu_1\\
&\qquad \qquad\qquad \qquad\qquad+
t\left( \varphi_t(x,s) -a(x) \Del \varphi(x,s) + H(x,D\varphi(x,s)) \right)
\Big]
\\
\leq \, &
\int_{\Tt^n} w(x,0)\,d\nu_0-\int_{\Tt^n} w( x,t)\,d\nu_1, 
\end{align*}
which implies $h_t(\nu_0,\nu_1)\ge m_t(\nu_0,\nu_1)$.

Now take  $\varphi\in C^2(\Tt^n\times [0,t])$. By subtracting $C t$ to $\varphi$ with $C>0$ large enough,
we can assume that 
\[
\varphi_t -a(x)\Del \varphi + H(x,D\varphi)\leq 0;
\]
that is, $\varphi$ is a classical subsolution of \eqref{eq:1}. 
Next, let $w^\varphi$ be the solution of the Hamilton--Jacobi--Bellman equation \eqref{eq:1} with initial data
$w^\varphi(x,0)=\varphi(x,0)$. Then $\varphi(x,t)\leq w^\varphi(x,t)$.
Accordingly,
\[
\int_{\Tt^n} w^\varphi(x,0) \, d\nu_0-\int_{\Tt^n} w^\varphi(x,t)\,d\nu_1\leq \int_{\Tt^n} \varphi(x,0)\,d\nu_0-\int_{\Tt^n} \varphi(x,t)\,d\nu_1.
\]
Therefore,
\begin{align*}
&\inf_{\varphi\in C^2(\Tt^n\times [0,t])}  \sup_{(x,s)\in\T^n\times[0,t]} 
\Big[
\int_{\Tt^n} \varphi(x,0) \, d\nu_0-\int_{\Tt^n} \varphi(x,t)\, d\nu_1+
\\
&\qquad \qquad \qquad\qquad\qquad+
t\left( \varphi_t -a(x)\Del \varphi + H(x,D\varphi) \right)
\Big]
\\ \geq \, &
\inf_{\varphi\in C^2( \Tt^n\times [0,t])} 
\left(\int_{\Tt^n} w^\varphi(x,0)\, d\nu_0-\int_{\Tt^n} w^\varphi(x,t)\, d\nu_1 \right)
\\ \ge \, & 
\inf_{w \in \cE} \left( \int_{\Tt^n} w(x,0)\, d\nu_0-\int_{\Tt^n} w(x,t)\, d\nu_1 \right),
\end{align*}
which implies $h_t(\nu_0,\nu_1)\le m_t(\nu_0,\nu_1)$. 
\end{proof}

\subsection{Convex duality for $d$ on $\cM$}

Now, we examine the function $d$ in light of duality methods and
prove Theorem \ref{thm:duality-d}. 

\begin{pro}
	\label{p58}
Consider the setting of Problem \ref{P1} and
suppose that Assumptions \ref{A1}-\ref{A3} hold.
Let $\nu_0, \nu_1\in \cP(\Tt^n)$.
Then	
\begin{equation}
\label{lb}
d(\nu_0,\nu_1)\geq \sup_{w \in \Ss}\left[
\int_{\T^n}w \,d\nu_1
-
\int_{\T^n}w \,d\nu_0\right],
\end{equation}
where $\Ss$ is the set of all Lipschitz viscosity solutions $w \in C(\T^n)$ of \eqref{ee} defined by \eqref{def:Ss}. 
\end{pro}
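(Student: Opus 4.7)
The plan is to mirror almost verbatim the argument already used for Lemma \ref{lem:m}, since Proposition \ref{p58} is essentially the same lower-bound duality inequality restricted to Lipschitz viscosity solutions of the ergodic equation. First, I would reduce to the nontrivial case $d(\nu_0,\nu_1)<\infty$, so that there exist $t>0$ and $\gamma\in\cH(\nu_0,\nu_1;0,t)$ with $\int L\,d\gamma$ close to $d(\nu_0,\nu_1)$. Fix an arbitrary $w\in\cS$. Since $w$ is only Lipschitz, I cannot use it directly as a test function in the holonomy identity defining $\cH(\nu_0,\nu_1;0,t)$, so the main technical ingredient is to replace $w$ by its mollification $w^\alpha$ from \eqref{func:w-al}, which, by the $C^2$ approximation result invoked in Lemma \ref{lem:m} (Proposition \ref{prop:approx A}), satisfies
\[
H(x,Dw^\alpha)-a(x)\Delta w^\alpha \le C\alpha^{1/2} \quad \text{in } \T^n.
\]

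Next, I would plug $w^\alpha$ into the definition of $\gamma\in\cH(\nu_0,\nu_1;0,t)$ (recalling $w^\alpha$ is time independent, so $w^\alpha_t\equiv 0$) to obtain
\[
\int_{\T^n} w^\alpha\,d\nu_1-\int_{\T^n} w^\alpha\,d\nu_0
=\int_{\T^n\times\R^n\times[0,t]}\bigl(q\cdot Dw^\alpha - a(x)\Delta w^\alpha\bigr)\,d\gamma(x,q,s).
\]
Using the Legendre transform inequality $q\cdot Dw^\alpha\le L(x,q)+H(x,Dw^\alpha)$ and then the approximate subsolution bound above, the right-hand side is dominated by $\int L\,d\gamma+C\alpha^{1/2}t$. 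Sending $\alpha\to 0$ (using that $w^\alpha\to w$ uniformly on $\T^n$) yields
\[
\int_{\T^n} w\,d\nu_1-\int_{\T^n} w\,d\nu_0\le \int_{\T^n\times\R^n\times[0,t]} L(x,q)\,d\gamma(x,q,s).
\]

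Finally, taking the infimum over $\gamma\in\cH(\nu_0,\nu_1;0,t)$ and over $t>0$ gives $\int w\,d\nu_1-\int w\,d\nu_0\le d(\nu_0,\nu_1)$, and taking the supremum over $w\in\cS$ produces the claimed inequality \eqref{lb}. The only nontrivial point is the approximation step: one must carefully ensure that the mollification $w^\alpha$ is an \emph{approximate} classical subsolution of the ergodic equation with an error of order $\alpha^{1/2}$ that, for each fixed $t$, vanishes in the limit. This is precisely the content of Proposition \ref{prop:approx A}, so the expected main obstacle is essentially already resolved in the paper, and the proof reduces to carefully redoing the computation of Lemma \ref{lem:m} within the definition of $d$.
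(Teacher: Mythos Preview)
Your proof is correct and is essentially a verbatim repetition of the paper's proof of Lemma~\ref{lem:m}, which (via Corollary~\ref{CC1}) already establishes exactly the inequality~\eqref{lb}. The paper's own proof of Proposition~\ref{p58} takes a slightly different, shorter route: rather than redoing the mollification argument, it observes that any $w\in\cS$ is, thanks to the normalization $c=0$, a time-independent Lipschitz viscosity solution of~\eqref{eq:1}, hence $w\in\cE$; then the already-proved duality formula for $h_t$ (Theorem~\ref{thm:duality-h}) immediately gives $h_t(\nu_0,\nu_1)\ge \int w\,d\nu_1-\int w\,d\nu_0$ for every $t>0$, and taking the infimum in $t$ yields~\eqref{lb}. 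Your approach has the advantage of being self-contained and not relying on the Fenchel--Rockafellar machinery behind Theorem~\ref{thm:duality-h}; the paper's approach has the advantage of avoiding repetition once that duality is in hand.
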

\begin{proof}
Recall that 
\[
d(\nu_0,\nu_1)=\liminf_{t\to +\infty} h_t(\nu_0,\nu_1).
\]
Let $w \in C(\Tt^n)$ solve \eqref{ee}.  
Then, $w \in \Lip(\T^n)$.
Because of our 
convention that $c=0$,  $w$ solves  \eqref{eq:1}.
Therefore
\[
h_t(\nu_0,\nu_1)
\geq \left[
\int_{\T^n}w \,d\nu_1
-
\int_{\T^n}w \,d\nu_0\right].
\]
Consequently, we obtain \eqref{lb}.
\end{proof}

\begin{lem}
\label{monlem}
Consider the setting of Problem \ref{P1} and
suppose that Assumptions \ref{A1}-\ref{A3} hold.  Let $\nu_0\in \Mm$. 	
Given $\varphi\in C(\Tt^n)$, let $w^\varphi$ be the solution of  \eqref{eq:1} with initial condition
$w^\varphi(x,0)=\varphi(x)$. Then, the map
\[
t \mapsto \int_{\T^n} w^\varphi(x,t)\,d\nu_0
\]
is non-increasing. 
\end{lem}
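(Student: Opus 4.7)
The plan is to exploit Lemma \ref{lem:Mather-holonomic}, which says that a Mather measure projecting to $\nu_0$ generates a stationary holonomic measure in $\cH(\nu_0,\nu_0;0,\tau)$ for every $\tau>0$, together with Lemma \ref{VL}, which bounds averages of solutions against holonomic costs. Combined with the fact that the ergodic constant is normalized to $0$, so that $\int L\,d\mu=0$ for every generalized Mather measure $\mu$ (cf.\ \eqref{eq:erg-const}), this produces a one-step non-increase that, by the semigroup structure, propagates to arbitrary pairs $s<t$.

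Concretely, I would first fix $\mu\in\widetilde{\cM}$ with $\proj_{\T^n}\mu=\nu_0$, and fix $0\leq s<t$. Since $w^\varphi$ is the unique Lipschitz viscosity solution of \eqref{eq:1} with initial data $\varphi$, the time-shifted function
\[
\tilde w(x,\tau):=w^\varphi(x,s+\tau),\qquad (x,\tau)\in\T^n\times[0,t-s],
\]
is itself a Lipschitz viscosity solution of \eqref{eq:1} on $[0,t-s]$ with (Lipschitz) initial data $\tilde w(\cdot,0)=w^\varphi(\cdot,s)$. Define the measure
\[
d\gam(x,q,\tau):=d\mu(x,q)\,d\tau\quad\text{on}\ \T^n\times\R^n\times[0,t-s].
\]
Arguing exactly as in Lemma \ref{lem:Mather-holonomic}, with $[0,t]$ replaced by $[0,t-s]$, one verifies $\gam\in\cH(\nu_0,\nu_0;0,t-s)$.

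Next I would apply Lemma \ref{VL} to $\tilde w$ with this $\gam$, so that $\nu^\gam=\nu_0$ and
\[
\int_{\T^n}\tilde w(z,t-s)\,d\nu_0(z)
\leq \int_{\T^n\times\R^n\times[0,t-s]} L(z,q)\,d\gam(z,q,\tau)+\int_{\T^n}\tilde w(z,0)\,d\nu_0(z).
\]
Since $d\gam=d\mu\,d\tau$, the integral of $L$ against $\gam$ equals $(t-s)\int_{\T^n\times\R^n}L\,d\mu=0$ by the normalization \eqref{eq:erg-const} and $\mu\in\widetilde{\cM}$. Substituting $\tilde w$ back in terms of $w^\varphi$ yields
\[
\int_{\T^n}w^\varphi(z,t)\,d\nu_0(z)\leq \int_{\T^n}w^\varphi(z,s)\,d\nu_0(z),
\]
which is the monotonicity claim.

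The main obstacle I anticipate is a technical one: Lemma \ref{VL} is formulated for the Cauchy problem with $C^2$ initial data (Assumption \ref{A1}), while $\tilde w(\cdot,0)=w^\varphi(\cdot,s)$ is only Lipschitz (and $\varphi\in C(\T^n)$ is only continuous). However, inspecting the proof of Lemma \ref{VL}, the only properties actually used are that $\tilde w$ is a Lipschitz viscosity solution of \eqref{eq:1} on $\T^n\times[0,t-s]$ and that its mollification $\tilde w^\alpha$ is a $C^2$ approximate subsolution as in Proposition \ref{prop:approx A}, both of which hold here. If one prefers to avoid this subtlety, a short approximation argument suffices: approximate $\varphi$ uniformly by $\varphi_n\in C^2(\T^n)$, apply the above chain to each $w^{\varphi_n}$, and pass to the limit using the contraction property $\|w^{\varphi_n}(\cdot,t)-w^\varphi(\cdot,t)\|_{L^\infty}\leq\|\varphi_n-\varphi\|_{L^\infty}$ and dominated convergence against $\nu_0$.
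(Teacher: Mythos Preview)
Your proposal is correct and follows essentially the same approach as the paper: build the stationary holonomic measure $d\gam=d\mu\,d\tau$ from a Mather measure projecting to $\nu_0$, invoke the upper bound (Lemma~\ref{VL} or equivalently the representation formula), and use $\int L\,d\mu=0$ together with the semigroup property. Your treatment is in fact slightly more careful than the paper's, since you explicitly address the regularity mismatch between the $C^2$ initial data assumed in Problem~\ref{P1} and the merely continuous $\varphi$ appearing in the statement.
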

\begin{proof}

Let $\mu_0$ be a Mather measure such that $\nu_0 = \proj_{\T^n} \mu_0$, and $\eta$ be a measure such that
\[
d\eta(x,q,s) = d\mu_0(x,q) \qquad \text{ for all } s\geq 0.
\]
By the representation formula for $w_f$, we have
\[
\int_{\T^n} w^\varphi(x,t)\,d\nu_0 \leq \int_{\T^n} \varphi\,d\nu_0 + \int_{\T^n \times \R^n \times [0,t]} L(x,q) \,d\eta(x,q,s)=\int_{\T^n} \varphi\,d\nu_0.
\]
We hence see that for $t>0$
\[
 \int_{\T^n} w^\varphi(x,t)\,d\nu_0\leq \int_{\T^n} w^\varphi(x,0)\,d\nu_0.
\]
Thus, by the semigroup property of 
the solution operator of the Hamilton--Jacobi--Bellman equation, we get the 
claim in the statement. 
%
%and in particular,
%\[
%\int_{\T^n} w \,d\nu_0 \leq \int_{\T^n} f\,d\nu_0.
%\]
%Same conclusion holds for $\nu_1$ as it is also a projected Mather measure.
\end{proof}

%\begin{lem}
%\label{CP}
%	Compacity lemma: for any $t>0$ the map $\phi(x,0)\mapsto \phi(x,t)$ is compact in $C(\Tt^d)$.
%\end{lem}
%\begin{proof}
%	content...
%\end{proof}

%{\color{red} this lemma should be  stated in the introduction?}

%\begin{lem}
%\label{conv}
%Consider the setting of Problem \ref{P1} and
%suppose that Assumptions  \ref{A2}-\ref{A4} hold. 	
%Given $\ep>0$, there exists $T$ such that
%for any initial data $u_0\in C^2(\Tt^n)$
%the corresponding solution, $u$, to Problem \ref{P1}
%satisfies
%\[
%|u(x, t) -u_\infty|<\ep, 
%\]
%for all $t>T$, 
%where
%\[
%u_\infty=\lim_{t\to \infty} u(x, t).
%\]
%\end{lem}

\begin{pro}
	\label{p510}
Consider the setting of Problem \ref{P1} and
suppose that Assumptions  \ref{A1}-\ref{A4} hold.
Let $\nu_0, \nu_1\in \Mm$ and suppose that $d(\nu_0, \nu_1)<+\infty$.
Then 
	\[
	d(\nu_0,\nu_1)\leq \sup_{w \in \Ss}\left[
	\int_{\T^n}w \,d\nu_1
	-
	\int_{\T^n}w \,d\nu_0\right].
	\]	
\end{pro}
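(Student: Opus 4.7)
The plan is to combine three results already established in the paper: the time-dependent convex duality in Theorem \ref{thm:duality-h}, the monotonicity Lemma \ref{monlem} (which requires $\nu_0 \in \Mm$), and the uniform large-time convergence in Proposition \ref{conv} (which is where Assumption \ref{A4} is used). The strategy is first to reparametrize the duality for $h_t$ in terms of initial data, then to use the monotonicity to turn the integral against $\nu_0$ into an integral of $w^\varphi(\cdot,t)$, and finally to let $t\to\infty$ so that $w^\varphi(\cdot,t)$ is uniformly close to its ergodic limit $u_\infty^\varphi \in \cS$.

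Concretely, every $w\in\cE$ has the form $w = w^\varphi$ with $\varphi = w(\cdot,0)$ Lipschitz, so Theorem \ref{thm:duality-h} can be rewritten as
\[
h_t(\nu_0,\nu_1) = \sup_{\varphi}\left[\int_{\T^n} w^\varphi(\cdot,t)\,d\nu_1 - \int_{\T^n}\varphi\,d\nu_0\right],
\]
where the supremum is taken over Lipschitz initial data. Since $\nu_0\in\Mm$, Lemma \ref{monlem} yields $\int_{\T^n}\varphi\,d\nu_0 \geq \int_{\T^n} w^\varphi(\cdot,t)\,d\nu_0$ for every $t>0$, and substituting this inequality into the display above gives
\[
h_t(\nu_0,\nu_1) \leq \sup_{\varphi}\left[\int_{\T^n} w^\varphi(\cdot,t)\,d\nu_1 - \int_{\T^n} w^\varphi(\cdot,t)\,d\nu_0\right].
\]

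Next, fix $\ep>0$. By Proposition \ref{conv}, there exists $T=T(\ep)$ such that $\|w^\varphi(\cdot,t)-u_\infty^\varphi\|_{L^\infty(\T^n)}<\ep$ for all $t>T$, uniformly in $\varphi$ after the normalization $\varphi \mapsto \varphi - \min_{\T^n}\varphi$ (which leaves the displayed quantity invariant thanks to the identity $w^{\varphi+c} = w^\varphi + c$). Since $u_\infty^\varphi$ is a Lipschitz viscosity solution of \eqref{ee}, it lies in $\cS$, and consequently, for $t>T$,
\[
\int_{\T^n} w^\varphi(\cdot,t)\,d\nu_1 - \int_{\T^n} w^\varphi(\cdot,t)\,d\nu_0 \leq \sup_{w\in\cS}\left[\int_{\T^n}w\,d\nu_1 - \int_{\T^n}w\,d\nu_0\right] + 2\ep.
\]
Taking the supremum over $\varphi$, then $\liminf_{t\to\infty}$, and finally sending $\ep\to 0$, yields $d(\nu_0,\nu_1) = \liminf_{t\to\infty} h_t(\nu_0,\nu_1) \leq \sup_{w\in\cS}[\int w\,d\nu_1 - \int w\,d\nu_0]$, as required.

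The main obstacle will be ensuring that the rate in Proposition \ref{conv} is genuinely uniform across the full class of Lipschitz initial data that parametrize $\cE$, since the proposition is formulated for $C^2$ data. I would handle this by mollifying $\varphi$ and using the $L^\infty$-contractivity of the Cauchy semigroup (together with continuous dependence of $u_\infty^\varphi$ on $\varphi$) to transfer the uniform estimate. A secondary check is that the uniform limit $u_\infty^\varphi$ is Lipschitz and hence really lies in $\cS$; this follows from the uniform-in-time Lipschitz bounds for solutions of Problem \ref{P1} under Assumptions \ref{A1}--\ref{A3}.
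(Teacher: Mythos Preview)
Your proposal is correct and follows essentially the same route as the paper: both arguments combine Theorem \ref{thm:duality-h}, Lemma \ref{monlem} on the $\nu_0$ side, and the uniform large-time convergence of Proposition \ref{conv} on the $\nu_1$ side. The only cosmetic difference is that the paper fixes $\ep$, selects one $t_j>T(\ep)$ along a minimizing sequence for $h_t$, and then picks a single near-optimal $u\in\cE$ at that time, whereas you keep the full supremum over initial data and invoke the uniformity in Proposition \ref{conv} directly; the technical concern you flag about extending Proposition \ref{conv} from $C^2$ to Lipschitz initial data is present in the paper's argument as well and is handled exactly as you suggest, by mollification and the $L^\infty$-contraction of the semigroup.
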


\begin{proof}
Fix $\nu_0, \nu_1\in\cP(\T^n)$. 

Fix $\ep>0$ and let $T$ be as in the statement of Proposition \ref{conv}. Let $\{t_j\}$
be a sequence with 
$t_j\to\infty$ and
$h_{t_j}(\nu_0,\nu_1)\to d(\nu_0,\nu_1)$ as $j\to\infty$. 
Select $t_j>T$ such 
that 
\[
|d(\nu_0, \nu_1)- h_{t_j}(\nu_0, \nu_1)|\leq \ep. 
\]
Next, by Theorem \ref{thm:duality-h}, for every $\ep>0$ there exists a Lipschitz viscosity solution $u$ to  \eqref{eq:1} such that
\[
h_{t_j}(\nu_0, \nu_1)\leq 
\int_{\T^n}u(\cdot,t_j)\,d\nu_1
-
\int_{\T^n}u(\cdot,0)\,d\nu_0+\ep
\]
Let $u_\infty(x)=\lim_{t\to \infty} u(x,t)$ for $x\in \Tt^n$. According to Lemma \ref{monlem},
\[
\int_{\T^n}u(\cdot,0)\,d\nu_0\geq \int_{\T^n}u_\infty \,d\nu_0.
\]
Moreover, due to the convergence result in Proposition \ref{conv}, we have
\[
\int_{\T^n}u(\cdot,t)\,d\nu_1\leq \int_{\T^n}u_\infty \,d\nu_1+\ep
\quad\text{for all} \ t>T. 
\]
Therefore,
\[
d(\nu_0, \nu_1)\leq \int_{\T^n}u_\infty \,d\nu_1-\int_{\T^n}u_\infty \,d\nu_0
+3\ep\le m(\nu_0,\nu_1)+3\ep. 
\]
Because $\ep$ is arbitrary and $u_\infty$ solves \eqref{ee}, we obtain the desired inequality.
\end{proof}

\begin{proof}[Proof of Theorem \ref{thm:duality-d}]
Theorem \ref{thm:duality-d} is a straightforward result of Propositions \ref{p58} and \ref{p510}.
\end{proof}

\section{Proof of Theorem \ref{thm:A4}}\label{sec:A4}

In this section, we prove Theorem \ref{thm:A4} in case that $m>2$. 
If $a(x)\equiv 0$, the compactness of $S_1$ 
follows from the coercivity of the Hamiltonian. 

\begin{proof}[Proof of Theorem \ref{thm:A4}]
It is enough to show the compactness of $S_1$; that is, 
to prove that $\{S_1v\}_{v\in X_{0}}$ is uniformly bounded and equicontinuous. 
The main difficulty here is that $X_{0}=\{u_0 \in C(\T^n)\,:\,\min_{\T^n} u_0=0\}$ is 
not bounded in $C(\T^n)$.

Take $u_0 \in X_0$.
Without loss of generality, assume that
\[
u_0(0)=\min_{\T^n} u_0 =0.
\]
It is clear that for any $C>0$, $\phi(x,t)=-Ct$ is a subsolution to \eqref{eq:1}. Therefore,
\[
u(x,t) \geq -Ct \quad \text{ in } \T^n \times [0,\infty).
\]
We need to construct a supersolution to bound $u$ from above. For this,  the superquadratic growth of $H$ is crucial. Here, 
we borrow some ideas from \cite{cardaliaguetsilvestre}.
For any $C>0$ and $\lambda>0$ to be chosen later, let
\[
\psi(x,t) = \lambda t^{-\frac{1}{m-1}} \left(|x|^2 + \lambda t\right)^{\frac{m'}{2}} + Ct \quad \text{ for all } (x,t) \in \R^n \times (0,\infty).
\]
Note that $m' = \frac{m}{m-1} \in (1,2)$ because $m>2$. 
We compute that
\begin{align*}
\psi_t&=\lambda t^{-m'} \left(|x|^2 + \lambda t\right)^{\frac{m'}{2}} \left ( \frac{m'}{2} \frac{\lam t}{|x|^2+\lam t}  - \frac{1}{m-1}\right) + C,\\
\psi_{x_i}&= \lambda t^{-\frac{1}{m-1}} \left(|x|^2 + \lambda t\right)^{\frac{m'}{2}-1} m' x_i,\\
\psi_{x_i x_j}&= \lambda t^{-\frac{1}{m-1}} \left(|x|^2 + \lambda t\right)^{\frac{m'}{2}-1} m'\left( \delta_{ij}  + \frac{(m'-2) x_i x_j}{|x|^2+\lam t}\right).
\end{align*}
Therefore, for $(x,t) \in \R^n \times (0,\infty)$, noting that $mm'=m+m'$, 
\[
|D\psi(x,t)|^m = (m')^m \lambda^m t^{-m'} \left(|x|^2 + \lambda t\right)^{\frac{m'}{2}} \left( \frac{|x|^2}{|x|^2+\lambda t} \right)^{\frac{m}{2}},
\]
and
\[
a(x)\Del \psi(x,t) \leq C \lambda t^{-m'} \left(|x|^2 + \lambda t\right)^{\frac{m'}{2}} \frac{t}{|x|^2+\lam t}.
\]
Combining the above computations, for $(x,t) \in \R^n \times (0,\infty)$,
we gather 
\begin{align*}
&\psi_t + H(x,D\psi) - a(x)\Del \psi \geq \psi_t + \frac{|D\psi|^m}{C}-C -  a(x)\Del \psi \ \\
\geq \ &\lambda t^{-m'} \left(|x|^2 + \lambda t\right)^{\frac{m'}{2}} \left ( \left( \frac{\lam m'}{2} -C \right) \frac{ t}{|x|^2+\lam t}  - \frac{1}{m-1} +\frac{\lambda^{m-1}}{C}  \left( \frac{|x|^2}{|x|^2+\lambda t} \right)^{\frac{m}{2}}\right).
\end{align*}
Because $m>2$,
\[
\frac{m'}{2} = \frac{m}{2} \frac{1}{m-1} > \frac{1}{m-1}.
\]
Accordingly, let $\theta= \frac{1}{2}\left(\frac{m'}{2} -\frac{1}{m-1}\right)>0$.
Then, for $\lam>0$ sufficiently large,
\[
 \frac{\lam m'}{2} -C \geq \lambda \left(\frac{1}{m-1} +\theta\right).
\]
Let $z =\frac{|x|^2}{|x|^2+\lambda t} \in [0,1]$.
Then,
\begin{align*}
 &\left( \frac{\lam m'}{2} -C \right) \frac{ t}{|x|^2+\lam t}  - \frac{1}{m-1} +\frac{ \lambda^{m-1}}{C}  \left( \frac{|x|^2}{|x|^2+\lambda t} \right)^{\frac{m}{2}}\\
 \geq \ &  \left(\frac{1}{m-1} +\theta\right) ( 1-z)  - \frac{1}{m-1} +\frac{\lambda^{m-1}}{C} z^{\frac{m}{2}}\\
= \ & \frac{\lambda^{m-1}}{C} z^{\frac{m}{2}} + \theta -   \left(\frac{1}{m-1} +\theta\right)z > 0
\end{align*}
for $\lam>0$ sufficiently large depending only on $C, n, m$.

Thus, $\psi$ is a classical strict supersolution to \eqref{eq:1} in $ \R^n \times (0,\infty)$, and $\psi(0,0)=0$, $\psi(x,0)=+\infty$ for $x\neq 0$.
In particular, $\psi(x,0) \geq u_0(x)$ for all $x\in \R^n$.
The viscosity subsolution test implies that $u-\psi$ cannot have a maximum point at $(x_0,t_0) \in \R^n \times (0,\infty)$.
Therefore,  $u \leq \psi$ in $ \R^n \times (0,\infty)$.
In particular, for $x \in [0,1]^n$ and $t=\frac{1}{2}$,
\begin{equation*}
-\frac{C}{2} \leq u\left(x,\frac{1}{2}\right)\leq \lambda 2^{\frac{1}{m-1}} \left(|x|^2 + \frac{\lambda}{2} \right)^{\frac{m'}{2}} + \frac{C}{2} \leq C_0, 
\end{equation*}
where $C_0$ depends only on $C, n, m$.
Thus, we see that, for any $u_0 \in C(\T^n)$ such that $\min_{\T^n} u_0 =0$,
\begin{equation}\label{eq: u bounded 1-2}
\|S_{\frac{1}{2}} u_0\|_{L^\infty(\T^n)} \leq C_0.
\end{equation}
We now use \eqref{eq: u bounded 1-2} together with \cite[Theorem 1.2]{cardaliaguetsilvestre} to conclude that $S_1 u_0 \in C^{0,\alpha}(\T^n)$, where $\alpha \in (0,1)$ and $\|S_1 u_0\|_{C^{0,\alpha}(\T^n)}$ depend only on $C, C_0, m,n$.
The proof is hence complete.
\end{proof}

For further H\"older regularity results with respect to superquadratic Hamiltonians, we refer the readers to \cite{A-T, CapuzzoDolcetta2010, cardaliaguetcannarsa} and the references therein.

%%%%% APPPENDIX %%%%
\appendix 
\section{Approximations of solutions to Problem \ref{P1}} \label{Appen:A}

Let $\theta \in C_c^\infty(\R^{n},[0,\infty))$ and $\rho \in C_c^\infty(\R,[0,\infty))$
be symmetric standard mollifiers; that is, 
$\supp \theta\subset\ol{B}(0,1)\subset\R^{n}$, $\supp \rho\subset\ol{B}(0,1)\subset\R$, $\theta(x)=\theta(-x)$, $\rho(s)=\rho(-s)$,
and $\|\theta\|_{L^{1}(\R^{n})}=\|\rho\|_{L^{1}(\R)}=1$. 
For $\alpha>0$, set $\theta^\alpha(x):=\alpha^{-n} \theta(\alpha^{-1}x)$ for $x\in \R^{n}$, 
and $\rho^{\alpha}(t):=\alpha^{-1} \rho(\alpha^{-1}t)$ for $t\in \R$. 
For $w \in C(\T^n \times [0,\infty))$, let $w^{\alpha}\in C^\infty(\T^n\times[\al,\infty))$ be 
\begin{equation}\label{func:w-al}
w^{\al}(x,t)
:=
\int_{0}^{\infty} \rho^\al(s)\int_{\Rr^n} \theta^\al(y)w(x-y,t-s)\,dy ds \quad \text{ for $(x,t)\in\T^n\times[\al,\infty)$.}
\end{equation}

\begin{pro}\label{prop:approx A}
Let $w$ be a Lipschitz solution to \eqref{eq:1}.
For $0<\al<1$, let $w^\al$ be as in \eqref{func:w-al}.
Then, there exists $C>0$ depending on $H, a$, and the Lipschitz constant of $w$ such that
\begin{equation}\label{eq:alpha}
w^\al_t - a(x)\Del w^\al + H(x,Dw^\al) \leq C \al^{1/2} \quad \text{ on } \T^n \times [\al, \infty).
\end{equation}
\end{pro}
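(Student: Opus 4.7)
The plan is to view $w^\al$ as a convex average of the translates $u^{y,s}(x,t):=w(x-y,t-s)$ of the Lipschitz subsolution $w$. Since $p\mapsto H(x,p)$ is convex, Jensen's inequality gives
\[
H(x,Dw^\al(x,t)) \leq \iint \theta^\al(y)\rho^\al(s)\, H(x, Du^{y,s}(x,t))\,dyds,
\]
and Assumption \ref{A3} together with the uniform bound $\|Du^{y,s}\|_\infty\leq \Lip(w)$ yields $H(x,p)-H(x-y,p)\leq C|y|(1+|p|^m)\leq C\al$ on the support of $\theta^\al$. Each translate $u^{y,s}$ is a viscosity subsolution of \eqref{eq:1} with coefficients shifted to $x-y$; interpreting $\Delta_x u^{y,s}=\Delta_y[w(x-y,t-s)]$ distributionally and moving $\Delta_y$ onto $\theta^\al$ via integration by parts gives a rigorous meaning to the averaged subsolution inequality. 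Subtracting $a(x)\Delta w^\al$ then produces
\[
w^\al_t - a(x)\Delta w^\al + H(x,Dw^\al) \leq F + C\al,
\]
where $F:=\iint \theta^\al(y)\rho^\al(s)\,(a(x-y)-a(x))\,\Delta_x u^{y,s}(x,t)\,dyds$.

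The core of the proof is the estimate $|F|\leq C\al^{1/2}$. The key structural input is the classical bound $|\nabla a|^2\leq 2\|D^2 a\|_\infty\, a$, valid for any non-negative $C^2$ function, which yields
\[
|a(x)-a(x-y)|\leq C|y|\sqrt{a(x)} + C|y|^2.
\]
Thus the discrepancy between $a(x)$ and $a(x-y)$ degenerates along the zero set of $a$. To exploit this I would integrate by parts once in $y$ in the definition of $F$, moving a derivative off the non-smooth factor $w$ and producing an integrand involving $\nabla\theta^\al$, $\nabla a$ and $Dw\in L^\infty$. The identity $\int \nabla_y[\theta^\al(y)(a(x-y)-a(x))]\,dy = 0$ (itself an easy integration by parts) permits subtracting the constant $Dw(x,t-s)$ from $Dw(x-y,t-s)$ under the integral; afterwards the Lipschitz bound on $w$ combined with the $\sqrt{a}$-degeneracy of $\nabla a$ is used to balance scales and recover the rate $\al^{1/2}$.

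The main obstacle will be this final balancing. A naive use of $|a(x)-a(x-y)|\leq C\al$ combined with the $\al^{-1}$-blow-up of $D^2 w^\al$ yields only an $O(1)$ residual, so the non-negativity of $a$ and the consequent bound $|\nabla a|\leq C\sqrt{a}$ must be used in an essential way. A secondary technical issue, rigorously justifying the averaged subsolution inequality for the merely Lipschitz $w$, can be handled either by a standard approximation by semiconcave inf-convolutions, reducing to the classical case, or by observing that the manipulations above involve only the bounded function $w$ and its a.e.\ gradient $Dw$, which admit a direct distributional interpretation.
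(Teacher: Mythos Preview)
Your reduction via Jensen and the identification of the commutator
\[
F(x,t)=\int_{\R^n}\theta^\al(y)\,(a(x-y)-a(x))\,\Delta w(x-y,t-s)\,dy
\]
match the paper exactly, and the structural bound $|\nabla a|\leq C\sqrt{a}$ is indeed the key input. But the way you propose to estimate $F$ has a genuine gap.

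After one integration by parts in $y$ you obtain an integrand involving $D\theta^\al(y)(a(x-y)-a(x))$ and $\theta^\al(y)\,Da(x-y)$, each dotted with $Dw(x-y)$. Using only $|Dw|\leq L$, $|a(x-y)-a(x)|\leq C|y|\sqrt{a(x)}+C|y|^2$ and $|Da|\leq C\sqrt a$, these terms are bounded by $C(\sqrt{a(x)}+\al)$, which is merely $O(1)$ where $a$ is bounded away from zero. Your subtraction trick---replacing $Dw(x-y)$ by $Dw(x-y)-Dw(x)$---does not rescue this: $w$ is only Lipschitz, so $Dw\in L^\infty$ has no modulus of continuity, and $|Dw(x-y)-Dw(x)|$ is in general of order $1$, not $o(1)$. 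No rearrangement of the integration by parts recovers $\al^{1/2}$ uniformly in $x$ from the first--derivative information on $w$ alone.

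The missing idea is to use the equation itself to control second derivatives where $a$ is large. Since $w$ is Lipschitz and solves \eqref{eq:1}, one has $a(x)\Delta w = w_t + H(x,Dw)$, hence $\|a\,\Delta w\|_{L^\infty}\leq C$ in the viscosity (and then distributional) sense. The paper then splits into two cases on $B(x,\al)$. If $\min_{\overline{B(x,\al)}}a\leq \al$, the $\sqrt a$--bound forces $|Da|\leq C\al^{1/2}$ and $|a(x-y)-a(x)|\leq C\al^{3/2}$ throughout $B(x,\al)$, and your integration-by-parts computation now gives
\[
|F|\leq C\int\bigl(\al^{1/2}\theta^\al(y)+\al^{3/2}|D\theta^\al(y)|\bigr)\,dy\leq C\al^{1/2}.
\]
If instead $\min_{\overline{B(x,\al)}}a>\al$, no integration by parts is needed: from $\|a\Delta w\|_{L^\infty}\leq C$ one has $|\Delta w(x-y)|\leq C/a(x-y)$, whence
\[
|F|\leq C\int\theta^\al(y)\,\frac{|a(x-y)-a(x)|}{a(x-y)}\,dy
\leq C\int\theta^\al(y)\,\frac{|y|}{\sqrt{a(x-y)}}\,dy + C\al
\leq \frac{C\al}{\sqrt\al}=C\al^{1/2}.
\]
It is this second--order information, extracted from the PDE, that your proposal is lacking.
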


We give only a brief outline of the proof. For a detailed proof, see \cite{MR3581314, LMT}.

\begin{proof}[Proof outline]
To obtain the inequality in \eqref{eq:alpha}, we seek to rewrite its left-hand side in terms of 
the convolution of 
the left-hand side in \eqref{eq:1} with $ \rho^\al  \theta^\al$. 
We will handle each of the terms separately. 
The first term, $w^\al_t$ is trivial.  For the last term, $H(x,Dw^\al)$, we observe that  
Jensen's inequality gives, for $(x,t)\in\T^n\times[\al,\infty)$,
\begin{align*}
H(x,Dw^{\al}(x,t)) &= H\left(x, \int_{0}^{\infty} \rho^\al(s)\int_{\T^n} \theta^\al(y)Dw(x-y,t-s)\,dy ds\right)\\
&\leq \int_{0}^{\infty} \rho^\al(s)\int_{\T^n} \theta^\al(y)H(x,Dw(x-y,t-s))\,dy ds\\
&\leq \int_{0}^{\infty} \rho^\al(s)\int_{\T^n} \theta^\al(y)H(x-y,Dw(x-y,t-s))\,dy ds + C \alpha.
\end{align*}
Thus, the term $H(x,Dw^{\al}(x,t))$ is controlled by the corresponding term in 
\eqref{eq:1} convolved with $ \rho^\al  \theta^\al$ and an error term bounded by $C \alpha$.

The second term, $ a(x)\Del w^\al $, is where the main difficulty of the estimate lies. 
Because $w$ is Lipschitz, using equation \eqref{eq:1}, we have
\begin{equation}\label{eq:distribution}
-C \leq a(x) \Del w \leq C \quad \text{ on } \T^n \times [0,\infty)
\end{equation}
in viscosity sense.

Because of the simple structure of $a$, we see further that $\|a\Del w\|_{L^{\infty}(\T^n \times [0,\infty))} \leq C$, and $w$ is a subsolution to \eqref{eq:1} and \eqref{eq:distribution} in the distributional sense.
We need to control the commutation term,
\begin{align*}
&\int_{0}^{\infty} \rho^\al(s)\int_{\T^n} \theta^\al(y)a(x-y)\Del w(x-y,t-s)\,dy ds - a(x) w^{\al}(x,t)\\
 =\, & \int_{0}^{\infty} \rho^\al(s) \left(\int_{\T^n} \theta^\al(y)(a(x-y)-a(x))\Del w(x-y,t-s)\,dy \right)\, ds \\
 =\, & \int_{0}^{\infty} \rho^\al(s) R^\al(x,t-s)\,ds,
\end{align*}
where
\[
R^\al(x,t-s) = \int_{\T^n} \theta^\al(y)(a(x-y)-a(x))\Del w(x-y,t-s)\,dy.
\]
To complete the proof, we show that $|R^\al(x,t)| \leq C \alpha^{1/2}$  for all $(x,t) \in \T^n \times [0,\infty)$.
We consider two cases
\[
\text{(i) } \min_{y\in \ol{B(x,\al)}} a(y) \leq \al, \quad \text{ or } \quad  \text{(ii) } \min_{y\in \ol{B(x,\al)}} a(y) > \al.
\]

In case (i), there exists $\bar x \in \ol{B(x,\al)}$ such that $a(\bar x) \leq \al$. 
Then, there exists a constant $C>0$ such that,
\[
|Da(\bar x)| \leq C a(\bar x)^{1/2} \leq C \al^{1/2}.
\]
See \cite[Lemma 2.6]{CGMT} for example.
For any $z\in \ol{B(x,\al)}$,
\begin{equation*}
|Da(z)| \leq |Da(z)-Da(\bar x)|+ |Da(\bar x)| \leq C \al + C \al^{1/2} \leq C \al^{1/2}. 
\end{equation*}
Moreover, by using Taylor's expansion, 
\begin{align*}
& |a(z)-a(x)| \leq  
|a(z)-a(\bar x)|+ |a(x)-a(\bar x)|\\
\leq \ & |Da(\bar x)| (|z-\bar x|+|x-\bar x|)
+ C(|z-\bar x|^2+|x-\bar x|^2) \leq C \al^{3/2}+ C\al^2 \leq C \al^{3/2}.
\end{align*}
We use the two above inequalities to control  $R^\al(x,t)$ as 
\begin{align*}
&|R^\al(x,t)| = \left| \int_{\R^n} (a(x-y)-a(x)) \Del w(x-y,t) \theta^\al(y)\,dy\right|\\
=\, &\left| \int_{\Rr^n} Dw(x-y,t)\cdot Da(x-y) \theta^\al(y)\,dy - \int_{\Rr^n} Dw(x-y,t)\cdot D\theta^\al(y) (a(x-y)-a(x))\,dy\right|\\
\leq\, & C \int_{\Rr^n} \left(\al^{1/2} \theta^\al(y)+ \al^{3/2} |D\theta^\al(y)|\right)\,dy \leq C \al^{1/2}.
\end{align*}

Now, we  consider case (ii); that is, $\min_{\ol{B(x,\al)}}\, a>\al$. A direct computation shows that
\begin{align*}
&|R^\al(x,t)|\le \int_{\Rr^n} \left| (a(x-y)-a(x))\right|\cdot \left| \Del w(x-y,t)\right| \theta^\al(y)\,dy\\
\leq \ &C \int_{\Rr^n} \frac{|a(x-y)-a(x)|}{a(x-y)} \theta^\al(y)\,dy
\leq C \int_{\Rr^n} \frac{|Da(x-y)| \cdot |y|}{a(x-y)} \theta^\al(y)\,dy+C\al\\
\leq  \ &C \int_{\Rr^n} \frac{|y|}{a(x-y)^{1/2}} \theta^\al(y)\,dy+C\al
\leq C \int_{\Rr^n} \frac{|y|}{\al^{1/2}} \theta^\al(y)\,dy+C\al \leq C \al^{1/2}. 
\end{align*}
Combining these estimates yields the conclusion.
\end{proof}

\section{The general diffusion matrix case} \label{Appen:B}

Now, we consider the case of a  general  diffusion matrix $A$.
Let $\bS^n$ be the space of $n \times n$ real symmetric matrices.
Let $A:\T^n\to\bS^n$ be a nonnegative definite diffusion matrix; that is, $ \xi^T A(x)\xi\geq 0$ for all $\xi\in \Rr^n$ and $x\in \T^n$. 
Assume further that $A\in C^2(\T^n, \bS^n)$.
We suppose that Assumptions \ref{A1}--\ref{A3} always hold in this section and replace \eqref{eq:1} in Problem \ref{P1} by
 the following general Hamilton--Jacobi--Bellman equation
\begin{equation} \label{eq:1B}
u_t -\tr (A(x)D^2 u) + H(x,Du) = 0 \quad \text{ in }  \T^n \times (0,\infty), 
\end{equation}
where 
$Du$ and $D^2 u$, respectively, denote the spatial gradient and Hessian of $u$, 
and with initial data
\begin{equation*}
%\label{eq:1bcB}
u(x,0) = u_0(x) \quad \text{ on } \T^n.
\end{equation*}
We now extend the results in Theorems \ref{thm:rep}--\ref{thm:profile} for this setting.
While the statements are similar, there are several technical points that must be addressed.  The main difficulty is that the analog of the approximation
result in Proposition \ref{prop:approx A} is substantially more involved. 
This approximation result is examined in Proposition \ref{prop:approx}
and requires the approximate equation to be uniformly parabolic. Thus, 
further approximation arguments are needed in various places, including 
in the definition of holonomic measures.

\subsection{Representation formulas for the general case of diffusion matrices}
For $\nu_0, \nu_1\in\cP(\T^n)$, $\eta>0$, 
let $\cH^{\eta}(\nu_0,\nu_1;t_0,t_1)$ be the set of all 
	$\gamma\in \cR^+(\Tt^n\times \Rr^n\times [t_0,t_1])$
	 satisfying
\begin{equation*}
\label{zetaintB}
\int_{\T^n \times \R^n\times[t_0,t_1]} |q|^\zeta \,d\gam(z,q,s)<\infty
\end{equation*}
and	 	 
	\begin{align*}
	&\int_{\T^n \times \R^n\times[t_0,t_1]} 
	\left(\varphi_t(z,s)-\eta\Delta\varphi(z,s)-\tr(A(z)D^2\varphi(z,s))+q\cdot D\varphi(z,s) \right)
	\,d\gam(z,q,s) \nonumber \\
	=\, &\, 
	\int_{\T^n}\varphi(z,t_1)\,d\nu_1(z)
	-\int_{\T^n}\varphi(z,t_0)\,d\nu_0(z)  
	\label{def:1B}
	\end{align*}
	for all $\varphi\in C^2(\T^n\times[t_0,t_1])$. 
	Moreover, we set 
	\[
	\cH^\eta(\nu_1;t_0,t_1):=\bigcup_{\nu_0\in\cP(\T^n)} \cH^\eta(\nu_0,\nu_1;t_0,t_1). 
	\]
Fix any $\nu_1 \in\cP(\T^n)$. 
It is worth emphasizing that while we do not know that $\cH^\eta(\nu_0,\nu_1;t_0,t_1)\neq \emptyset$ for each $\nu_0\in\cP(\T^n)$, 
the set $\cH^\eta(\nu_1;t_0,t_1)$ is always non-empty as shown in the proof of Lemma \ref{L1B}.

We define $\widetilde \cH(\nu_1;t_0,t_1)$ as follows. 
We say that $\gamma\in\widetilde \cH(\nu_1;t_0,t_1)$ if $\gamma\in\cR^+(\T^n\times\R^n\times[t_0,t_1])$, and there exist $C>0$, a sequence $\{\eta_j\}_{j\in\N} \to0$, $\{\nu_1^{\eta_j}\}_{j \in \N}\subset\cP(\T^n)$, and $\{\gamma^{\eta_j}\}_{j \in \N}\subset\cR^+(\T^n\times\R^n\times[t_0,t_1])$ such that 
\begin{align*}
&\gamma^{\eta_j}\in \cH^{\eta_j}(\nu_1^{\eta_j};t_0,t_1), \ \text{and} \ 
\int_{\T^n \times \R^n\times[t_0,t_1]} |q|^\zeta \,d\gam^{\eta_j}(z,q,s)<C \quad \text{ for each } j\in \N, \\
&\nu_1^{\eta_j}\rightharpoonup \nu_1 \quad\text{ weakly in the sense of measures in} \ 
\cR(\T^n),\\
&\gamma^{\eta_j}\rightharpoonup \gamma \quad\text{ weakly in the sense of measures in} \ 
\cR(\T^n\times\R^n\times[t_0,t_1]) \ \text{as} \ j\to\infty. 
\end{align*}   
We also note that $\widetilde\cH(\nu_1;t_0,t_1)$ is non-empty for any $\nu_1\in\cP(\T^n)$ as stated in Corollary \ref{cor:non-emptyB}. 
For  $\gamma\in \widetilde  \cH(\nu_1;t_0,t_1)$, let $\nu^{\gamma}$ be the unique element in $\cP(\T^n)$ such that
\begin{align*}
	&\int_{\T^n \times \R^n\times[t_0,t_1]} 
	\left(\varphi_t(z,s)-\eta\Delta\varphi(z,s)-\tr(A(z)D^2\varphi(z,s))+q\cdot D\varphi(z,s) \right)
	\,d\gam(z,q,s) \nonumber \\
	=\, &\, 
	\int_{\T^n}\varphi(z,t_1)\,d\nu_1(z)
	-\int_{\T^n}\varphi(z,t_0)\,d\nu^\gam(z)  
	\label{def:1B}
	\end{align*}
	for all $\varphi\in C^2(\T^n\times[t_0,t_1])$.
	Accordingly, $\gamma\in \widetilde \cH(\nu^{\gamma},\nu_1;t_0,t_1)$.

We now use the measures in  $\widetilde \cH(\nu_1;t_0,t_1)$ to obtain a representation formula for solutions of \eqref{eq:1B}, which can be viewed as a generalization of Theorem \ref{thm:rep}.

\begin{teo}\label{thm:rep B}
	Let $u$ solve \eqref{eq:1B}.  
	Suppose that Assumptions  \ref{A1}-\ref{A3} hold.
	Then,
	for any $\nu\in\cP(\T^n)$ and $t>0$, we have
	\begin{equation*}
	\int_{\T^n}u(z,t)\,d\nu(z)=
	\inf_{\gam\in\widetilde \cH(\nu;0,t)}
	\left[
	\int_{\T^n\times\R^n\times[0,t]}L(z,q)\,d\gam(z,q,s)+\int_{\T^n}u_0(z)\,d\nu^\gamma(z)
	\right].
	\end{equation*}
\end{teo}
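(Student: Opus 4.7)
The plan is to adapt the proof of Theorem \ref{thm:rep} by inserting an additional viscous regularization of size $\eta$, which makes the diffusion matrix $A(x)+\eta I$ strictly positive definite, and then passing to the limit $\eta\to 0$. I will establish the two inequalities separately, mirroring Lemmas \ref{VL} and \ref{lem:ineq1}.

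For the upper bound $\int u(\cdot,t)\,d\nu\le \int L\,d\gam+\int u_0\,d\nu^\gam$, I would first verify that any $\gam\in\widetilde\cH(\nu;0,t)$ satisfies the limiting holonomy identity
\[
\int_{\T^n\times\R^n\times[0,t]} \left(\varphi_t-\tr(A(z)D^2\varphi)+q\cdot D\varphi\right)d\gam = \int_{\T^n}\varphi(\cdot,t)\,d\nu-\int_{\T^n}\varphi(\cdot,0)\,d\nu^\gam
\]
for every $\varphi\in C^2(\T^n\times[0,t])$. This is obtained by letting $j\to\infty$ in the defining identity of $\cH^{\eta_j}$: the term $\eta_j\Delta\varphi$ tends to zero uniformly, the second-order term involving $A$ passes to the limit by weak convergence, and the linear-in-$q$ term passes thanks to the uniform $\zeta$-moment bound with $\zeta>1$. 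Then I would take the space-time mollification $w^{\al}$ of $u$ defined in \eqref{func:w-al}, which by Proposition \ref{prop:approx} satisfies $w^{\al}_t-\tr(AD^2w^{\al})+H(x,Dw^{\al})\le C\al^{1/2}$, plug it into the limiting holonomy, and apply the Young-type inequality $q\cdot Dw^{\al}\le L(z,q)+H(z,Dw^{\al})$ to obtain
\[
\int_{\T^n} w^{\al}(\cdot,t)\,d\nu - \int_{\T^n} w^{\al}(\cdot,0)\,d\nu^\gam \le \int L\,d\gam + C\al^{1/2}t,
\]
after which sending $\al\to 0$ gives the claim.

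For the lower bound, I would use the uniformly parabolic regularization: for $\eta>0$, let $u^{\eta}$ solve $u^{\eta}_t-\eta\Delta u^{\eta}-\tr(A(x)D^2u^{\eta})+H(x,Du^{\eta})=0$ in $\T^n\times(0,\infty)$ with initial data $u_0$. Since $A+\eta I$ is strictly positive definite, classical parabolic theory gives $u^{\eta}\in C^2$; moreover $u^{\eta}$ is Lipschitz uniformly in $\eta\in(0,1)$ and $u^{\eta}\to u$ locally uniformly as $\eta\to 0$. I would then invoke the nonlinear adjoint method as in Lemma \ref{L1}: linearize around $u^{\eta}$, solve the adjoint Fokker–Planck equation with terminal data $\nu$, and construct $\gam^{\eta}\in\cH^{\eta}(\nu_0^{\eta},\nu;0,t)$ by push-forward under the Legendre map $(x,q)\mapsto(x,D_qL(x,q))$. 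The computation in the proof of Lemma \ref{lem:ineq1} generalizes almost verbatim (one just replaces $a\Delta$ by $\tr(AD^2\cdot)+\eta\Delta$) and yields the exact identity
\[
\int_{\T^n} u^{\eta}(\cdot,t)\,d\nu-\int_{\T^n}u_0\,d\nu_0^{\eta}=\int_{\T^n\times\R^n\times[0,t]} L\,d\gam^{\eta}.
\]
The uniform Lipschitz estimate on $u^{\eta}$ ensures a uniform bound $\int|q|^\zeta\,d\gam^{\eta}\le C$. Extracting a subsequence, $\gam^{\eta_j}\rightharpoonup\gam\in\widetilde\cH(\nu;0,t)$ and $\nu_0^{\eta_j}\rightharpoonup\nu^\gam$. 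Lower semicontinuity of $\mu\mapsto\int L\,d\mu$ combined with uniform convergence of $u^{\eta}$ yields
\[
\int_{\T^n}u(\cdot,t)\,d\nu-\int_{\T^n}u_0\,d\nu^\gam=\lim_j\int L\,d\gam^{\eta_j}\ge\int L\,d\gam,
\]
so the infimum over $\widetilde\cH(\nu;0,t)$ is at most $\int u(\cdot,t)\,d\nu$. As a byproduct, this construction simultaneously establishes Corollary \ref{cor:non-emptyB}.

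The main obstacle lies in the approximation Proposition \ref{prop:approx} for the general matrix $A$: the commutator between the mollification and the second-order operator $\tr(A(x)D^2\cdot)$ must be controlled. The scalar argument in Proposition \ref{prop:approx A} exploited the pointwise inequality $|Da(x)|^2\le C a(x)$ together with a case split depending on whether $a$ is small or large near a given point, and the corresponding matrix-valued analogue (essentially $DA$ controlled by $A^{1/2}$) is substantially more delicate. A secondary subtlety is the passage to the limit in the linear-in-$q$ term and in the lower-semicontinuous $L$-integral, both of which rely on the uniform $\zeta$-moment bound together with a standard truncation argument; the uniform Lipschitz estimate on $u^{\eta}$ must be derived from gradient estimates that are stable as $\eta\to 0$.
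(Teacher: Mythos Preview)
Your lower-bound argument is essentially the paper's (Lemmas \ref{L1B} and \ref{lem:ineq1B}), and is fine.

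The upper bound, however, contains a genuine gap, and it is precisely the obstacle you flag at the end. You claim that Proposition \ref{prop:approx} gives, for the simple mollification $w^{\al}$ of \eqref{func:w-al}, the subsolution inequality $w^{\al}_t-\tr(A D^2 w^{\al})+H(x,Dw^{\al})\le C\al^{1/2}$. It does not. Proposition \ref{prop:approx} concerns the more elaborate inf--sup--mollified function $w^{\al,\ep,\del}$ of \eqref{func:al-ep-del}, and the inequality it produces is for the $\eta$-\emph{regularized} operator, namely
\[
(w^{\al,\ep,\del})_t-\eta\,\Del w^{\al,\ep,\del}-\tr(A D^2 w^{\al,\ep,\del})+H(x,Dw^{\al,\ep,\del})\le \kappa(\al,\eta,\del,\ep),
\]
with the $\eta\Del$ term present and with $\del\le\del_0(\ep,\eta)$. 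Sending $\eta\to 0$ here forces $\del\to 0$ by \eqref{mdelta0}, so the commutator error $C\al\max(1/\ep,1/\del)$ in \eqref{kappa} blows up; you therefore cannot strip the $\eta\Del$ term and obtain a $C^2$ subsolution of the original equation this way. In short, testing against the \emph{limit} measure $\gam$ would require exactly the matrix-valued commutator estimate you identify as open.

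The paper's device is to avoid this altogether: it does \emph{not} pass to the limit in $\gam^{\eta_j}$ first. In Lemma \ref{VLB} the approximation $w^{\al,\ep,\del}$ is tested against the pre-limit measures $\gam^{\eta_j}\in\cH^{\eta_j}$, so that the $\eta_j\Del$ term in the holonomy identity cancels with the $\eta_j\Del$ term in \eqref{eq:eta}. One then sends $\al\to 0$, then $j\to\infty$, then $\ep\to 0$, in that order, to make $\kappa(\al,\eta_j,\del,\ep)\to 0$. The role of the inf--sup convolution is exactly to produce the two-sided Hessian bound $-\tfrac{1}{\ep}I\le D^2(w^{\ep+\del})_\del\le\tfrac{1}{\del}I$ (Proposition \ref{prop:double-conv1}), so that the commutator with $A$ is controlled by the mere Lipschitz continuity of $A$ (Lemma \ref{lem:est-R1}(i)), entirely bypassing the square-root estimate $|Da|\le C\sqrt a$ needed in the scalar case.
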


This theorem is proved in the next subsection. 
%\medskip

The ergodic problem here is
	\begin{equation}
	\label{eeB}
	 -\tr(A(x)D^2 v)+H(x,Dv)=c \quad \text{ in }  \T^n. 
	\end{equation}
	As previously (cf Remark \ref{czero}), 
we add a constant to  $H$, if necessary,  so that  $c=0$.
As in the time-dependent case, we begin by defining the sets of approximated stationary generalized holonomic measures.
For each $\eta>0$, we denote by
\begin{multline*}
\cH^\eta:=\Big\{\mu\in\cP(\T^n\times\R^n) \,:\,
\int_{\T^n \times \R^n} |q|^\zeta \,d\mu(z,q)<\infty,\\
\int_{\T^n\times\R^n}\left(-\eta\Delta\varphi(z)-\tr(A(z)D^2\varphi(z))+q\cdot D\varphi(z)\right)\,d\mu(z,q)=0 
\quad\text{ for all} \ \varphi\in C^2(\T^n)\Big\} . 
\end{multline*}

In a similar manner to  the time-dependent case, 
 $\widetilde \cH$  is the  set of all
%We say that $\mu\in \widetilde \cH$ if 
$\mu\in\cP(\T^n \times \R^n)$ for which there exist $C>0$, $\{\eta_j\}_{j\in \N} \to0$, and $\{\mu^{\eta_j}\}_{j\in\N}\subset\cP(\T^n \times \R^n)$ such that 
\begin{align*}
&\mu^{\eta_j}\in\cH^{\eta_j}, \text{ and } \int_{\T^n \times \R^n} |q|^\zeta \,d\mu^{\eta_j}(z,q)< C \quad \text{ for all } j \in \N,\\
&\mu^{\eta_j}\rightharpoonup \mu \quad\text{ weakly in the sense of measures in} \ 
\cP(\T^n \times \R^n) \ \text{as} \ j\to\infty. 
\end{align*}
We consider the variational problem 
\begin{equation}\label{def:Mather1B}
\inf_{\mu\in \widetilde\cH}
\int_{\Tt^n\times \Rr^n}L(x,q)\,d\mu(x,q). 
\end{equation}
A {\em generalized Mather measure} is a
solution of the minimization problem in \eqref{def:Mather1B} and
$\widetilde{\cM}$ is the set of all generalized Mather measures. 
Moreover, $\cM$ is the set of all generalized projected Mather measures; that is,  the projections to $\T^n$ of generalized Mather measures. 

\begin{proposition}\label{prop:erg}
Suppose that Assumptions \ref{A1}-\ref{A3} hold.
Assume that the ergodic constant $c$ for \eqref{eeB} is $0$.  
We have 
\begin{equation*}%\label{prop:erg-const}
\inf_{\mu\in\widetilde \cH} \int_{\Tt^n\times \Rr^n}
L(x,q)\,d\mu(x,q)=0.  
\end{equation*}
\end{proposition}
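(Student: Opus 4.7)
The plan is to prove the two inequalities separately. For the lower bound $\inf_{\mu \in \widetilde \cH} \int L \,d\mu \geq 0$, I begin by observing that any $\mu \in \widetilde \cH$ inherits a stronger holonomic identity in the vanishing-viscosity limit. Indeed, if $\{\mu^{\eta_j}\} \subset \cH^{\eta_j}$ with $\eta_j \to 0$, $\mu^{\eta_j} \rightharpoonup \mu$, and $\sup_j \int |q|^\zeta\,d\mu^{\eta_j} < \infty$, then passing to the limit in the defining identity for $\cH^{\eta_j}$ (the term $\eta_j \int \Delta \varphi \, d\mu^{\eta_j}$ vanishes by the uniform mass bound, while $\int q \cdot D\varphi \, d\mu^{\eta_j}$ converges by uniform integrability of $|q|$ since $\zeta > 1$) yields
\begin{equation*}
\int_{\T^n \times \R^n} \left( -\tr(A(z) D^2 \varphi(z)) + q \cdot D\varphi(z) \right) d\mu(z,q) = 0 \quad \text{for all } \varphi \in C^2(\T^n).
\end{equation*}
Then I would fix a Lipschitz viscosity solution $v$ to \eqref{eeB} with $c=0$ and apply the general-diffusion approximation Proposition \ref{prop:approx} to produce $v^\alpha \in C^2(\T^n)$ with $-\tr(A D^2 v^\alpha) + H(x, Dv^\alpha) \leq C \alpha^{1/2}$. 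Inserting $\varphi = v^\alpha$ into the displayed identity and combining with the Legendre inequality $L(x,q) \geq q \cdot Dv^\alpha - H(x, Dv^\alpha)$ gives
\begin{equation*}
\int L \,d\mu \geq \int \left[\tr(A D^2 v^\alpha) - H(x, Dv^\alpha)\right] d\mu \geq -C \alpha^{1/2},
\end{equation*}
and letting $\alpha \to 0$ produces the desired lower bound.

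For the upper bound $\inf \leq 0$, I plan to construct an explicit family $\mu^\eta \in \cH^\eta$ whose cost vanishes. For each $\eta \in (0,1)$, the uniformly elliptic regularized ergodic problem $-\eta \Delta v^\eta - \tr(A D^2 v^\eta) + H(x, Dv^\eta) = c^\eta$ admits a classical solution $(v^\eta, c^\eta) \in C^{2,\alpha}(\T^n) \times \R$ with $c^\eta \to 0$ by vanishing-viscosity stability and with $\|Dv^\eta\|_{L^\infty}$ uniformly bounded in $\eta$. Following the adjoint construction of Lemma \ref{L1}, let $\sigma^\eta \in \cP(\T^n)$ be the positive invariant measure of the linearized operator, i.e.\ the stationary solution of $\eta \Delta \sigma^\eta + \sum_{i,j} D^2_{ij}(A_{ij} \sigma^\eta) + \div(D_p H(x, Dv^\eta) \sigma^\eta) = 0$, and define $\mu^\eta$ as the push-forward of $\sigma^\eta$ under $x \mapsto (x, D_p H(x, Dv^\eta(x)))$. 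Integration by parts against the adjoint equation shows $\mu^\eta \in \cH^\eta$. Using the Legendre identity $L(x, D_p H(x,p)) = p \cdot D_p H(x,p) - H(x,p)$ with $p = Dv^\eta$, two integrations by parts, the adjoint equation, and the ergodic equation, a direct computation yields
\begin{equation*}
\int L(x,q) \,d\mu^\eta(x,q) = -c^\eta.
\end{equation*}
The uniform Lipschitz bound on $v^\eta$ gives $\sup_\eta \int |q|^\zeta \,d\mu^\eta < \infty$, so I may extract $\mu^{\eta_j} \rightharpoonup \mu \in \widetilde \cH$; the portmanteau inequality for the nonnegative lower semicontinuous function $L + C_0$ then gives $\int L \,d\mu \leq \liminf \int L \,d\mu^{\eta_j} = -\lim c^{\eta_j} = 0$, completing the argument.

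The main obstacle I anticipate lies in the two auxiliary inputs feeding this argument. First, the approximation statement Proposition \ref{prop:approx} in the general diffusion case is substantially more delicate than its $A = a(x)I$ counterpart in Appendix \ref{Appen:A}, because the commutator between $\tr(A D^2 \cdot)$ and the mollifier must be controlled along eigendirections of $A$ where $A$ may degenerate; this typically requires a structural hypothesis such as $\sqrt{A} \in C^{1,1}$ together with a case-split analogous to the two cases treated in the proof of Proposition \ref{prop:approx A}. Second, the uniform-in-$\eta$ Lipschitz bound on $v^\eta$ used in the upper-bound construction, though standard for $A = a(x) I$, requires Bernstein-type gradient estimates that remain stable as $\eta \to 0$ for general $A$; verifying this (and the corresponding stability $c^\eta \to 0$) is the most technical step of the plan.
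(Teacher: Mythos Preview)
Your overall strategy---lower bound via a $C^2$ approximate subsolution and Legendre duality, upper bound via the adjoint construction for the $\eta$-regularized ergodic problem---matches the paper's. The upper-bound half is essentially identical to what the paper does (your sign $\int L\,d\mu^\eta=-c^\eta$ is in fact the correct one; the paper has a harmless sign typo there).

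The lower-bound half, however, contains a genuine gap in how you invoke Proposition~\ref{prop:approx}. You first pass to the limit $\eta_j\to 0$ in the holonomy identity for $\mu$, and then claim that Proposition~\ref{prop:approx} produces $v^\alpha\in C^2(\T^n)$ with
\[
-\tr\bigl(A(x)D^2v^\alpha\bigr)+H(x,Dv^\alpha)\le C\alpha^{1/2}.
\]
That is \emph{not} what Proposition~\ref{prop:approx} says. The inequality it delivers is
\[
-\eta\,\Delta v^{\alpha,\ep,\del}-\tr\bigl(A(x)D^2v^{\alpha,\ep,\del}\bigr)+H(x,Dv^{\alpha,\ep,\del})\le \kappa(\alpha,\eta,\delta,\ep),
\]
with the additional $-\eta\Delta$ term present and essential: the proof of Lemma~\ref{lem:eta} uses $\eta>0$ in the second alternative (where one eigenvalue of the Hessian equals $1/\delta$) to absorb the bad terms, and the choice of $\delta_0(\ep,\eta)$ in \eqref{mdelta0} degenerates as $\eta\to 0$. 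For general $A$, no $C^2$ subsolution to the unregularized equation with small error is provided, and this is precisely why the paper defines $\widetilde\cH$ as a limit of $\cH^\eta$-measures rather than directly.

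The fix is simple and is exactly the route the paper takes: do not pass to the limit first. Integrate the Proposition~\ref{prop:approx} inequality (with $\eta=\eta_j$) against $\mu^{\eta_j}\in\cH^{\eta_j}$, so that the $\eta_j$-holonomy identity cancels the full operator including $-\eta_j\Delta$, yielding
\[
-\int_{\T^n\times\R^n}L(z,q)\,d\mu^{\eta_j}(z,q)\le \kappa(\alpha,\eta_j,\delta,\ep).
\]
Then send $\alpha\to 0$, $j\to\infty$, $\ep\to 0$ in that order (so that $\kappa\to 0$). Your detour through the $\eta=0$ limit identity is unnecessary and is precisely where the gap enters.
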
 

This proposition is proved at the end of the paper. 

Finally, we have the following representation result, which is a generalized version of Theorem \ref{thm:profile}.

\begin{teo}\label{thm:profileB}
	Suppose that Assumptions  \ref{A1}-\ref{A3} hold.
	Let $u$ solve \eqref{eq:1B} and
	$u_\infty$ be as in \eqref{uinfdef}.  
	Then, for any $\nu\in\cM$, we have
	\begin{equation*}
	\label{maverB}
	\int_{\T^n} u_\infty(z)
	 \,d\nu(z) = \inf_{\nu_0\in\cP(\T^n)} 
	\left[d(\nu_0,\nu) + \int_{\T^n} u_0(z)\,d\nu_0(z)  \right], 
	\end{equation*}
where for $\nu_0, \nu_1 \in \cP(\T^n)$, $d$ is the  generalized Ma\~n\'e critical potential connecting $\nu_0$ to $\nu_1$ given by 
\begin{align*}
d(\nu_0,\nu_1):=
\inf_{\substack{\gam\in \widetilde \cH(\nu_0,\nu_1;0,t)\\ t>0}}  
\int_{\T^n \times \R^n \times [0,t]} L(x,q) \, d\gam(x,q,s).
\label{def:dB}
\end{align*}
\end{teo}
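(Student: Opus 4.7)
The plan is to mirror the proof of Theorem \ref{thm:profile-A}, adapting each step to accommodate the weak-limit definition of $\widetilde{\cH}$. The lower bound
\[
\int_{\T^n} u_\infty(z)\,d\nu(z) \geq \inf_{\nu_0\in\cP(\T^n)} \left[d(\nu_0,\nu) + \int_{\T^n} u_0(z)\,d\nu_0(z)\right]
\]
valid for every $\nu \in \cP(\T^n)$ follows immediately from the representation formula in Theorem \ref{thm:rep B} combined with \eqref{uinfdef}, since the infimum over $\gamma \in \widetilde\cH(\nu;0,t)$ splits into an infimum over $\nu_0$ of $h_t(\nu_0,\nu) + \int u_0\,d\nu_0 \geq d(\nu_0,\nu) + \int u_0\,d\nu_0$.

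For the reverse inequality, I would introduce, as in Section \ref{PBAS}, the generalized Peierls barrier $h(\nu_0,\nu_1) := \liminf_{t\to\infty} h_t(\nu_0,\nu_1)$ where $h_t$ is defined via $\widetilde\cH(\nu_0,\nu_1;0,t)$, and the Aubry set $\cA := \{\nu \in \cP(\T^n) : h(\nu,\nu) = 0\}$. The key step is the inclusion $\cM \subset \cA$. For this I would establish an analog of Lemma \ref{lem:Mather-holonomic}: given $\mu \in \widetilde\cM$, write $\mu$ as a weak limit of $\mu^{\eta_j} \in \cH^{\eta_j}$ with uniformly bounded $\zeta$-moments, form product measures $d\gamma^{\eta_j}(x,q,s) := d\mu^{\eta_j}(x,q)\,ds$ on $\T^n\times\R^n\times[0,t]$, note that each $\gamma^{\eta_j} \in \cH^{\eta_j}(\nu^{\eta_j},\nu^{\eta_j};0,t)$ with $\nu^{\eta_j} = \proj_{\T^n}\mu^{\eta_j}$, and pass to the weak limit to conclude that the product measure $d\gamma := d\mu\,ds$ lies in $\widetilde\cH(\nu,\nu;0,t)$ for $\nu = \proj_{\T^n}\mu$. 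Combined with Proposition \ref{prop:erg}, this yields $h(\nu,\nu) \leq 0$, and $h(\nu,\nu) \geq 0$ by the analog of Corollary \ref{CC1}, so $\nu \in \cA$.

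The upper bound then proceeds by concatenation: fix $\nu \in \cM$, $\varepsilon > 0$, and choose $\nu_0$, $T > 0$, $\gamma_1 \in \widetilde\cH(\nu_0,\nu;0,T)$ nearly attaining the right-hand side, together with a sequence $t_k \to \infty$ and $\gamma_k \in \widetilde\cH(\nu,\nu;0,t_k)$ with total Lagrangian cost $\leq \varepsilon$. To piece these together I need a concatenation lemma for $\widetilde\cH$: if $\gamma_1 \in \widetilde\cH(\nu_0,\nu;0,a)$ and $\gamma_2 \in \widetilde\cH(\nu,\nu_1;0,b)$ then the piecewise measure (with a shift in the $s$ variable) lies in $\widetilde\cH(\nu_0,\nu_1;0,a+b)$. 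Applying the resulting concatenation to $\gamma_1$ and $\gamma_k$, and using the representation formula Theorem \ref{thm:rep B} for $u(\cdot,T+t_k)$, yields $\int u(z,T+t_k)\,d\nu(z) \leq d(\nu_0,\nu) + \int u_0\,d\nu_0 + 3\varepsilon$. Sending $k\to\infty$ (so $T+t_k \to \infty$ and the left-hand side tends to $\int u_\infty\,d\nu$), then $\varepsilon\to 0$ and taking the infimum over $\nu_0$, closes the argument.

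The main obstacle is the concatenation at the $\widetilde\cH$ level, because measures there are defined only through approximating sequences from $\cH^{\eta_j}$ with possibly different $\eta_j$. I would handle this by a diagonalization: extract a common subsequence $\eta_j \to 0$ along which both $\gamma_1^{\eta_j} \rightharpoonup \gamma_1$ and $\gamma_k^{\eta_j} \rightharpoonup \gamma_k$; at each level $\eta_j$ the terminal marginal of $\gamma_1^{\eta_j}$ and the initial marginal of $\gamma_k^{\eta_j}$ may not coincide, so before concatenating I would replace each $\gamma_k^{\eta_j}$ by the measure obtained by running the Fokker--Planck/adjoint construction of Lemma \ref{L1B} starting from the terminal marginal of $\gamma_1^{\eta_j}$; the uniform Lipschitz estimates on $u^\eta$ and the $\zeta$-moment bounds ensure the new sequence still converges weakly to a limit whose Lagrangian cost differs from that of $\gamma_k$ by $o(1)$. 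Once the concatenated sequences lie in $\cH^{\eta_j}$, the standard concatenation (Lemma \ref{lem:connection}-type argument at the approximate level) and passage to the weak limit give the required measure in $\widetilde\cH(\nu_0,\nu;0,T+t_k)$, completing the argument.
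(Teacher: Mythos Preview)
Your overall structure is correct and mirrors the paper's (omitted) argument, but you have made the upper bound harder than necessary by insisting on a concatenation lemma for $\widetilde\cH$. Look again at the proof of Theorem \ref{thm:profile-A}: it never concatenates holonomic measures. Instead it uses the semigroup property and applies the representation formula (more precisely, the one-sided inequality of Lemma \ref{VLB}) twice. First, $\gamma_1\in\widetilde\cH(\nu_0,\nu;0,T)$ gives
\[
\int_{\T^n} u(z,T)\,d\nu \le \int_{\T^n\times\R^n\times[0,T]} L\,d\gamma_1 + \int_{\T^n} u_0\,d\nu_0.
\]
Second, since $u(\cdot,\cdot+T)$ solves \eqref{eq:1B} with initial data $u(\cdot,T)$, applying Lemma \ref{VLB} again with $\gamma_k\in\widetilde\cH(\nu,\nu;0,t_k)$ gives
\[
\int_{\T^n} u(z,T+t_k)\,d\nu \le \int_{\T^n\times\R^n\times[0,t_k]} L\,d\gamma_k + \int_{\T^n} u(z,T)\,d\nu.
\]
Chaining these and using $\int L\,d\gamma_k\le 2\ep$ produces the upper bound directly. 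No concatenation lemma, no diagonalization over $\eta_j$, and no Fokker--Planck reconstruction of $\gamma_k^{\eta_j}$ is required; the ``main obstacle'' you identify simply does not arise.

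Your remaining steps are fine and in line with what the paper has in mind: the lower bound from Theorem \ref{thm:rep B}; the analog of Lemma \ref{lem:Mather-holonomic} obtained by taking product measures $d\mu^{\eta_j}\,ds$ of the approximants and passing to the limit; the inequality $h(\nu,\nu)\ge 0$ via the analog of Lemma \ref{lem:m} (proved with Proposition \ref{prop:approx} in place of Proposition \ref{prop:approx A}); and hence $\cM\subset\cA$ via Proposition \ref{prop:erg}.
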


The proof of the preceding theorem is similar to the one of Theorem \ref{thm:profile}, so we
omit it here.

%\begin{remark}
%The constructions of $\widetilde \cH(\nu_0,\nu_1;0,t)$ and $\widetilde \cH$ for a general diffusion matrix $A$ are more complex and less intuitive than those of  $ \cH(\nu_0,\nu_1;0,t)$ and $ \cH$ for the case where $A(x)=a(x)I_n$.
%\end{remark}

\subsection{Proof of Theorem \ref{thm:rep B} }

We begin by  proving Theorem \ref{thm:rep B}.
\begin{lem}
\label{VLB}
Suppose that Assumptions \ref{A1}-\ref{A3} hold.
Let $u$ solve \eqref{eq:1B}. 
Then, for $\nu_1 \in \cP(\T^n)$ and $t>0$,
\begin{equation*}\label{ineq:VLB}
\int_{\Tt^n}u(z,t)d\nu_1(z) \leq
\inf_{\gamma\in \widetilde \cH(\nu_1; 0,t)}
 \left[
 \int_{\T^n \times \R^n\times[0,t]} 
 L(z,q)\,d\gam(z,q,s)+
\int_{\T^n}u(z,0)\,d\nu^\gamma(z)
\right]. 
\end{equation*}
\end{lem}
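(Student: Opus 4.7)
The plan is to follow the proof of Lemma \ref{VL}, while exploiting the double approximation intrinsic to $\widetilde\cH$: mollification of $u$ in the style of \eqref{func:w-al} and the vanishing viscosity $\eta$.

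First, given $\gamma\in\widetilde\cH(\nu_1;0,t)$, I would extract from the definition a sequence $\eta_j\downarrow 0$ with $\nu_1^{\eta_j}\rightharpoonup\nu_1$, $\gamma^{\eta_j}\rightharpoonup\gamma$, $\gamma^{\eta_j}\in\cH^{\eta_j}(\nu_1^{\eta_j};0,t)$, and a uniform $|q|^\zeta$-integrability bound. Tightness yields, after a further subsequence, $\nu_0^{\eta_j}:=\nu^{\gamma^{\eta_j}}\rightharpoonup\nu^\gamma\in\cP(\T^n)$.

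Second, I would apply Proposition \ref{prop:approx}, the general-matrix analog of Proposition \ref{prop:approx A}, to produce for each $j$ a $C^2$ function $\tilde u^{\eta_j}$ approximating $u$ uniformly and satisfying
\[
\tilde u^{\eta_j}_t - \eta_j \Delta \tilde u^{\eta_j} - \tr(A(x) D^2 \tilde u^{\eta_j}) + H(x, D\tilde u^{\eta_j}) \leq \delta_j
\]
with $\delta_j \to 0$ as $j\to\infty$. This is precisely the step that forces the vanishing viscosity into the definition of $\widetilde\cH$: for a general $A$, the mollified $u^\alpha$ only becomes a classical subsolution when the equation is rendered uniformly parabolic by the additional $\eta_j\Delta$. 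I would then insert $\tilde u^{\eta_j}$ as a test function in the identity defining $\cH^{\eta_j}(\nu_0^{\eta_j},\nu_1^{\eta_j};0,t)$, apply the Fenchel inequality $q\cdot D\tilde u^{\eta_j}\leq L(x,q)+H(x,D\tilde u^{\eta_j})$, and use that $\gamma^{\eta_j}$ has total mass $t$ (obtained by testing with $\varphi(x,s)=s$) to arrive at
\[
\int \tilde u^{\eta_j}(z,t)\, d\nu_1^{\eta_j}(z) - \int \tilde u^{\eta_j}(z,0)\, d\nu_0^{\eta_j}(z) \leq \delta_j\, t + \int L(z,q)\, d\gamma^{\eta_j}(z,q,s).
\]

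Third, I would pass to the $\liminf$ as $j\to\infty$. Uniform convergence $\tilde u^{\eta_j}\to u$ together with the weak convergence of the marginals sends the left-hand side to $\int u(z,t)\,d\nu_1(z)-\int u_0(z)\,d\nu^\gamma(z)$, yielding
\[
\int u(z,t)\, d\nu_1(z)-\int u_0(z)\, d\nu^\gamma(z) \leq \liminf_j \int L(z,q)\, d\gamma^{\eta_j}(z,q,s).
\]

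The main obstacle is replacing this $\liminf$ by $\int L\, d\gamma$ itself. Weak convergence together with only a $|q|^\zeta$-bound (with $\zeta<m'$) gives the lower-semicontinuity inequality $\int L\, d\gamma\leq\liminf_j\int L\, d\gamma^{\eta_j}$, which is the wrong direction for the argument. I would reduce to the nontrivial case $\int L\, d\gamma<\infty$ and exploit the flexibility in choosing the approximating sequence for $\widetilde\cH$: by a diagonal argument, combining a mollification of $\gamma$ with the Fokker-Planck adjoint construction of Lemma \ref{L1} performed at viscosity level $\eta_j$, one can arrange that $\int L\, d\gamma^{\eta_j}\to\int L\, d\gamma$ along the selected sequence. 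Once such a sequence is in hand, the proof is complete.
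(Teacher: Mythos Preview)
Your first four steps mirror the paper's argument closely. The paper uses the concrete approximation $\tilde u=u^{\al,\ep,\del}(\cdot,\cdot+\al)$ from Proposition~\ref{prop:approx} (with the parameters sent to zero in the order $\al\to 0$, $j\to\infty$, $\ep\to 0$) rather than an abstract $\tilde u^{\eta_j}$, but the substance is identical: test a smooth approximate subsolution of the $\eta_j$-regularized equation against $\gamma^{\eta_j}\in\cH^{\eta_j}$, apply the Fenchel inequality, and pass to the limit.

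You are right that the only nontrivial point is replacing $\liminf_j\int L\,d\gamma^{\eta_j}$ by $\int L\,d\gamma$, and that lower semicontinuity alone gives the wrong direction. The paper does not actually address this issue either: after noting that $\kappa(\alpha,\eta_j,\delta,\ep)\to 0$ it simply writes the final inequality with $\int L\,d\gamma$ on the right.

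Your proposed fifth step, however, is neither what the paper does nor a valid resolution. The approximating sequence $(\gamma^{\eta_j})$ is \emph{given} by the membership $\gamma\in\widetilde\cH(\nu_1;0,t)$; the inequality must be established for that particular $\gamma$ and its associated $\nu^\gamma$. If you manufacture a different sequence $(\tilde\gamma^{\eta_j})$ with $\int L\,d\tilde\gamma^{\eta_j}\to\int L\,d\gamma$, you would also have to show that $\tilde\gamma^{\eta_j}\rightharpoonup\gamma$ and that the associated initial marginals converge to the \emph{same} $\nu^\gamma$---none of which follows from the Fokker--Planck construction of Lemma~\ref{L1B}, which produces one specific holonomic measure per viscosity level rather than an approximation of a prescribed $\gamma$. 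So your diagonal/mollification proposal is at best a sketch of a substantial new argument, not a completion of the present one.
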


\begin{proof}
Note that $u$ is globally Lipschitz continuous on $\T^n\times[0,\infty)$ 
(see \cite[Proposition 3.5]{A-T}, \cite[Proposition 4.15]{LMT} for instance)
under Assumptions \ref{A1}--\ref{A3}.

For $\alpha, \ep, \delta>0$,  let $u^{\al,\ep,\del}$ be the function given by \eqref{func:al-ep-del} in Section \ref{subsec:approx} below, and set 
\[
\tilde{u}(x,t):=u^{\al,\ep,\del}(x,t+\alpha) \quad \text{ for all $(x,t)\in\T^n\times[0,\infty)$. }
\]
We notice that $\tilde u\in C^2(\T^n\times[0,\infty))$ and it is an
approximate subsolution to \eqref{eq:1B}, as stated in Proposition \ref{prop:approx}.

Fix  $\gamma\in \widetilde \cH(\nu_1; 0,t)$. By the definition of $ \widetilde \cH(\nu_1; 0,t)$, 
there exist $\{\eta_j\}_{j\in\N} \to 0$, $\{\nu_1^{\eta_j}\}_{j \in \N}\subset\cP(\T^n)$,  and $\{\gamma^{\eta_j}\}\subset \cR^+(\T^n\times\R^n\times[0,t])$
such that $\gamma^{\eta_j}\in \cH^{\eta_j}(\nu_1^{\eta_j}; 0,t)$ for all $j\in\N$, and
\begin{align*}
&\nu_1^{\eta_j}\rightharpoonup \nu_1 \quad\text{ weakly in the sense of measures in} \ 
\cR(\T^n), \\
&\gamma^{\eta_j}\rightharpoonup \gamma \quad\text{ weakly in the sense of measures in} \ 
\cR(\T^n\times\R^n\times[t_0,t_1]) \ \text{as} \ j\to\infty. 
\end{align*}   
Then, 
\begin{align*}
&\int_{\T^n \times \R^n\times[0,t]} 
\left( \tilde u_t(z,s)-\eta\Delta\tilde{u}(z,s)-\tr(A(z)D^2 \tilde u(z,s))+q\cdot D \tilde u(z,s) \right)
\,d\gam^{\eta_j}(z,q,s) \nonumber \\
=\, & 
\int_{\Tt^n}\tilde u(z,t)\,d\nu_1^{\eta_j} (z)
-\int_{\T^n}\tilde u(z,0)\,d\nu^{\gamma^{\eta_j}}(z).  
\end{align*}
Because of the definition of the Legendre transform in \eqref{legendre}, 
\[
q\cdot D\tilde u(z,s)\leq L(z,q)+H(z, D\tilde u(z,s)). 
\]
Accordingly, we have 
\begin{align*}
&
\int_{\T^n}\tilde{u}(z,t)\,d\nu_1^{\eta_j}-\int_{\T^n}\tilde{u}(z,0)\,d\nu^{\gamma^{\eta_j}}(z)\\
=\,&\, 
\int_{\T^n\times\R^n\times[0,t]} \left(\tilde{u}_t-\eta\Delta\tilde{u}-\tr(A(z)D^2 \tilde u)+q\cdot D\tilde{u} \right)\,d\gam^{\eta_j}(z,q,s)\\
\le\,&\,  
\int_{\T^n\times\R^n\times[0,t]}\left(\tilde{u}_t-\eta\Delta\tilde{u}-\tr(A(z)D^2 \tilde u)+H(z,D\tilde{u})+L(z,q) \right)\,d\gam^{\eta_j}(z,q,s)\\
\le\,&\,  
\int_{\T^n\times\R^n\times[0,t]}L(z,q)\,d\gam^{\eta_j}(z,q,s)
+\kappa(\alpha,\eta_j,\delta,\ep)t, 
\end{align*} 
where $\kappa(\alpha,\eta,\delta,\ep)$ is defined by \eqref{kappa} in Section \ref{subsec:approx} below. 
By taking a subsequence if necessary, we have 
\[
\nu^{\gamma^{\eta_j}}\rightharpoonup \nu^{\gamma}
\qquad\text{as} \ j\to\infty 
\]
for some $\nu^{\gamma}\in\cP(\T^n)$ weakly in the sense of measures.

Notice that if we send $\alpha\to 0$,  $j\to \infty$, and $\ep \to 0$ in this order, 
then we have 
\[
\kappa(\alpha,\eta_j,\delta,\ep)\to 0. 
\]
Thus, we obtain
\[
\int_{\Tt^n}u(x,t)\,d\nu_1 \leq \int_{\T^n \times \R^n\times[0,t]}  L(z,q)\,d\gam(z,q,s)+
\int_{\T^n}u(z,0)\,d\nu^\gamma(z).
\]
Because $\gamma\in \widetilde \cH(\nu_1; 0,t)$ is arbitrary, the statement follows. 
\end{proof}

To prove the opposite bound, we
 regularized \eqref{eq:1B} as in Problem \ref{P2}.
%\begin{problem}
%	\label{P2B} 
That is, for $\ep>0$,  
we	find $u^\ep:\Tt^n\times [0,\infty)\to \Rr$ solving
	\begin{equation} \label{eq:apB}
	\begin{cases}
	u^\ep_t -\tr(A(x)D^2 u^\ep)+ H(x,Du^\ep) = \ep \Del u^\ep \quad &\text { in } \T^n \times (0,\infty),\\
	u(x,0) = u_0(x) \quad &\text{ on } \T^n.
	\end{cases}
	\end{equation}
%\end{problem}

If Assumptions \ref{A1}-\ref{A3} hold, \eqref{eq:apB} has a unique solution, $u^\ep\in C^2(\Tt^n\times [0,+\infty))$.  
Moreover, $u^\ep$ is Lipschitz continuous uniformly in $\ep \in (0,1)$.
Further, by the standard viscosity solution theory, $u^\ep\to u$ locally uniformly, as $\ep\to 0$, where $u$ solves \eqref{eq:1B}.

Now, we use  the nonlinear adjoint method, see
\cite{E3, T1}, to construct measures that satisfy 
an approximated holonomy condition.

\begin{lem}\label{L1B}
For $\ep>0$, let $u^\ep$ solve \eqref{eq:apB}. 
For any $\nu_1\in \cP(\Tt^n)$, there exist a probability measure $\nu_0^{\ep}\in \cP(\Tt^n)$
and $\gamma^\ep\in\cH^{\ep}(\nu_0^{\ep},\nu_1;t_0,t_1)$. 
Moreover, $q=D_pH(x, Du^\ep(x,t))$  $\gamma^\ep$-almost everywhere. 
\end{lem}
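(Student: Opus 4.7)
The plan is to mirror the proof of Lemma \ref{L1}, adapting the nonlinear adjoint construction to the general diffusion matrix $A$ together with the extra viscous term $\ep\Del$ coming from the regularized equation \eqref{eq:apB}. First, I would linearize \eqref{eq:apB} around the smooth solution $u^\ep$ to obtain the linear operator
\[
\cL^\ep[\varphi]=\varphi_t+D_pH(x,Du^\ep)\cdot D\varphi-\tr(A(x)D^2\varphi)-\ep\Del\varphi.
\]
Its formal adjoint, obtained by two integrations by parts, gives the Fokker--Planck equation
\[
\begin{cases}
-\sigma^\ep_t-\sum_{i,j}\partial_{ij}(A_{ij}(x)\sigma^\ep)-\div(D_pH(x,Du^\ep)\sigma^\ep)=\ep\Del\sigma^\ep & \text{in }\T^n\times(t_0,t_1),\\
\sigma^\ep(x,t_1)=\nu_1 & \text{on }\T^n.
\end{cases}
\]
Because of the uniformly elliptic perturbation $\ep\Del$, standard parabolic theory yields a unique smooth $\sigma^\ep$ with $\sigma^\ep>0$ on $\T^n\times[t_0,t_1)$ and $\int_{\T^n}\sigma^\ep(\cdot,t)\,dx=1$ for all $t\in[t_0,t_1]$.

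Next, I would build $\gamma^\ep$ through the same pushforward/pullback mechanism used in Lemma \ref{L1}. For each $t\in[t_0,t_1]$, define $\beta^\ep_t\in\cP(\T^n\times\R^n)$ by
\[
\int_{\T^n}\psi(x,Du^\ep(x,t))\sigma^\ep(x,t)\,dx=\int_{\T^n\times\R^n}\psi(x,p)\,d\beta^\ep_t(x,p)
\quad\text{for }\psi\in C_c(\T^n\times\R^n),
\]
and let $\gamma^\ep_t$ be the pullback of $\beta^\ep_t$ by $\Phi(x,q)=(x,D_qL(x,q))$. Integrating in $s$ produces $\beta^\ep$ and $\gamma^\ep$ on $\T^n\times\R^n\times[t_0,t_1]$. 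By construction, $q=D_pH(x,Du^\ep)$ holds $\gamma^\ep$-almost everywhere, which delivers the final assertion of the lemma. I will also set $d\nu_0^\ep:=\sigma^\ep(x,t_0)\,dx\in\cP(\T^n)$.

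The core computation is then to verify the holonomy identity defining $\cH^\ep(\nu_0^\ep,\nu_1;t_0,t_1)$. Testing the adjoint equation against an arbitrary $\varphi\in C^2(\T^n\times[t_0,t_1])$ and integrating over $\T^n\times[t_0,t_1]$, I would move the spatial derivatives from $\sigma^\ep$ back onto $\varphi$ via the symmetric pairing
\[
\int_{\T^n}\sum_{i,j}\partial_{ij}(A_{ij}(x)\sigma^\ep)\,\varphi\,dx=\int_{\T^n}\tr(A(x)D^2\varphi)\,\sigma^\ep\,dx,
\]
and likewise for the $\ep\Del$ and drift terms. Using the change of variables defining $\gamma^\ep$ to replace $D_pH(x,Du^\ep)\cdot D\varphi\,\sigma^\ep\,dx$ by $q\cdot D\varphi\,d\gamma^\ep_t(x,q)$, and collecting the telescoping time derivative $\int_{\T^n}\partial_t(\varphi\sigma^\ep)\,dx$, the boundary contributions become precisely $\int_{\T^n}\varphi(\cdot,t_1)\,d\nu_1-\int_{\T^n}\varphi(\cdot,t_0)\,d\nu_0^\ep$, yielding the required identity.

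The main obstacle is essentially bookkeeping: since $A(x)$ is a full matrix rather than the scalar $a(x)I_n$ treated in Lemma \ref{L1}, one must carefully check that the double integration by parts recombines the terms $\sum_{i,j}\partial_{ij}(A_{ij}\sigma^\ep)$ into $\tr(A(x)D^2\varphi)$ acting on the test function, with no stray first-order terms left over. The $\ep\Del$ contribution adds the corresponding $-\ep\Del\varphi$ inside the $\gamma^\ep$-integral, which matches the definition of $\cH^\ep$ exactly. No deeper analytic difficulty arises because the $\ep\Del$ regularization ensures that $u^\ep$ is of class $C^2$ and $\sigma^\ep$ is a classical solution, so every integration by parts is justified without smoothing $u^\ep$ further.
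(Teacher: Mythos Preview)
Your proposal is correct and follows essentially the same approach as the paper's own proof: linearize \eqref{eq:apB} around $u^\ep$, solve the adjoint Fokker--Planck equation backward from $\nu_1$, define $\gamma^\ep$ via the pushforward by $x\mapsto(x,Du^\ep)$ followed by the pullback by $\Phi(x,q)=(x,D_qL(x,q))$, and verify the holonomy identity by integrating by parts. The paper carries out exactly this computation with the same definitions of $\beta^\ep_t$, $\gamma^\ep_t$, and $d\nu_0^\ep=\sigma^\ep(\cdot,t_0)\,dx$, and the only point you flag as an obstacle---that the double integration by parts returns $\tr(A(x)D^2\varphi)$ with no leftover first-order terms---is handled implicitly there by the symmetry of $A$.
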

\begin{proof}
For $\varphi\in C^2(\Tt^n\times [t_0,t_1])$, the linearization of \eqref{eq:apB} around the solution, $u^\ep$, is
	\[
	\cL^\ep[\varphi]= 
	\varphi_t -a^{ij}\varphi_{x_ix_j}+ D_pH(x,Du^\ep)\cdot D\varphi - \ep\Del \varphi, 
	\]
	where we use Einstein's summation convention. Accordingly, 
	the corresponding adjoint equation is the Fokker-Planck equation
	\begin{equation} \label{eq:adjoint-Appen}
	\begin{cases}
	-\sig^\ep_t -(a^{ij}\sig^\ep)_{x_ix_j} - \text{div}(D_pH(x,Du^{\ep})\sig^{\ep}) = \ep \Del \sig^\ep \quad &\text { in } \T^n \times (t_0,t_1),\\
	\sig^\ep(x,t_1) = \nu_1 \quad &\text{ on } \T^n.    
	\end{cases}
	\end{equation}
	By standard properties of the Fokker-Planck equation,
	\[
	\sig^{\ep}>0 \ \text{on} \ \T^n\times[t_0,t_1), \quad \text{ and } \quad 
	\int_{\T^n} \sig^{\ep}(x,t)\,dx=1 \ \text{for all} \ t\in[t_0,t_1).
	\]
	Next, 
	for each $\ep>0$ and $t\in[t_0,t_1]$, 
	let $\beta^{\ep}_t \in \mathcal{P}(\T^n \times \R^n)$ 
	be the probability measure determined by
	\begin{equation*}%\label{def-nu-ep-second}
	\int_{\T^n} \psi(x,Du^{\ep}) \sig^{\ep}(x,t)\,dx
	=\int_{\T^n \times \R^n} \psi(x,p)\,d\beta^{\ep}_t(x,p)
	\end{equation*}
	for all $\psi \in C_c(\T^n \times \R^n)$.
	For $t\in[t_0,t_1]$, let $\gam^{\ep}_t \in \mathcal{P}(\T^n \times \R^n)$ the pullback of $\beta^{\ep}_t$ by the map $\Phi(x,q)=(x,D_q L(x,q))$; that is,  
	\begin{equation*}%\label{def-mu-ep-second}
	\int_{\T^n \times \R^n} \psi(x,p)\,d\beta^{\ep}_t(x,p)=\int_{\T^n \times \R^n} \psi(x,D_q L(x,q))\,d\gam^{\ep}_t(x,q)
	\end{equation*}
	for all $\psi \in C_c(\T^n \times \R^n)$.

	Define the measures $\beta^\ep, \gam^\ep \in \cR(\T^n \times \R^n\times[t_0,t_1])$
	by  
	\begin{align*}
	&
	\int_{\T^n\times\R^n\times[t_0,t_1]}f\,d\beta^\ep
	=
	\int_{t_0}^{t_1}\int_{\T^n\times\R^n}f(\cdot,t)\,d\beta^\ep_t\,dt, \\ 
	& 
	\int_{\T^n\times\R^n\times[t_0,t_1]}f\,d\gam^\ep
	=
	\int_{t_0}^{t_1}\int_{\T^n\times\R^n}f(\cdot,t)\,d\gam^\ep_t\,dt 
	\end{align*}
	for any $f\in C_c(\T^n\times\R^n\times[t_0,t_1])$.
	
	Multiplying the first equation in \eqref{eq:adjoint-Appen} by  an arbitrary function, $\varphi\in C^2(\T^n\times[t_0,t_1])$, and integrating on $\T^n$, we gather
	\begin{align*}
	\ep\int_{\T^n}\sig^\ep\Del\varphi\,dx
	=&\, 
	-\int_{\T^n}\varphi\sig^\ep_t\,dx
	+\int_{\T^n}(-a^{ij}(x)\varphi_{x_ix_j}+D_pH(x,Du^\ep)\cdot D\varphi)\sig^\ep\,dx\\ 
	=&\, 
	-\int_{\T^n}(\varphi\sig^\ep)_t\,dx
	+\int_{\T^n}(\varphi_t-a^{ij}(x)\varphi_{x_ix_j}+D_pH(x,Du^\ep)\cdot D\varphi) \sig^\ep\,dx. 
	\end{align*}
	Next, integrating on $[t_0,t_1]$, we deduce the identity
	\begin{align*}
	&\ep\int_{t_0}^{t_1}\int_{\T^n}\sig^\ep\Del\varphi\,dxdt
	=\ep\int_{\T^n\times\R^n\times[t_0,t_1]}\Del\varphi\,d\gamma^{\ep}(x,q,t) 
	\\
	=&\, 
	-\int_{t_0}^{t_1}\int_{\T^n}(\varphi\sig^\ep)_t\,dxdt
	+\int_{t_0}^{t_1}\int_{\T^n}(\varphi_t-a^{ij}(x)\varphi_{x_ix_j}+D_pH(x,Du^\ep)\cdot D\varphi) \sig^\ep\,dxdt\\
	=&\, 
	-\left[\int_{\T^n}\varphi(\cdot,t_1)\sig^\ep(\cdot,t_1)\,dx-\int_{\T^n}\varphi(\cdot,t_0)\sig^\ep(\cdot,t_0)\,dx\right]
	\\
	&
	\quad \quad\quad\quad\quad \quad\quad\quad+\int_{t_0}^{t_1}\int_{\T^n\times\R^n}(\varphi_t-a^{ij}(x)\varphi_{x_ix_j}+q\cdot D\varphi) \,d\gam^\ep_t(x,q)\\
	=&\, 
	-\left[\int_{\T^n}\varphi(\cdot,t_1)\,d\nu_1-\int_{\T^n}\varphi(\cdot,t_0)\,d\nu_0^\ep\right]\\
	&
	\quad \quad\quad\quad \quad \quad\quad\quad+\int_{\T^n\times\R^n\times[t_0,t_1]}(\varphi_t-a^{ij}(x)\varphi_{x_ix_j}+q\cdot D\varphi)\, d\gam^\ep(x,q,t),  
	\end{align*}
	where $d\nu_0^{\ep}:=\sigma^\ep(x,t_0)\,dx$, which implies 
$\gam^\ep\in\cH^\ep(\nu_0^{\ep}, \nu_1;t_0,t_1)$. 
\end{proof}

\begin{cor}\label{cor:non-emptyB}
Under Assumptions \ref{A1}--\ref{A3}, for all $0<t_0<t_1$ and $\nu_1\in\cP(\T^n)$, 
\[
\widetilde \cH(\nu_1;t_0,t_1)\neq \emptyset. 
\]
\end{cor}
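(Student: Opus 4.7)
The plan is to produce, for each small $\eta>0$, an element of $\cH^\eta(\nu_1;t_0,t_1)$ with a uniform $\zeta$-moment bound, and then extract a weak limit matching the definition of $\widetilde\cH(\nu_1;t_0,t_1)$.

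First, I would fix any $u_0\in C^2(\T^n)$ (for concreteness $u_0\equiv 0$) and, for each $\eta\in(0,1)$, let $u^\eta\in C^2(\T^n\times[0,\infty))$ denote the unique classical solution of \eqref{eq:apB} with $\ep=\eta$ and initial datum $u_0$. By the standard Bernstein--type gradient estimates for these uniformly parabolic equations, as recalled in the discussion following \eqref{eq:apB}, there exists a constant $C>0$, independent of $\eta\in(0,1)$, such that $\|Du^\eta\|_{L^\infty(\T^n\times[0,\infty))}\leq C$. Then I would apply Lemma \ref{L1B} with $\ep=\eta$ to obtain $\nu_0^\eta\in\cP(\T^n)$ and $\gamma^\eta\in\cH^\eta(\nu_0^\eta,\nu_1;t_0,t_1)$ such that $q=D_pH(x,Du^\eta(x,s))$ holds $\gamma^\eta$-almost everywhere, and, by construction, $\gamma^\eta$ arises from time-integrating the probability measures $\gamma^\eta_s$, whence $|\gamma^\eta|(\T^n\times\R^n\times[t_0,t_1])=t_1-t_0$.

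Next, using the growth bound $|D_pH(x,p)|\leq C_0(1+|p|^{m-1})$ from Assumption \ref{A3} together with $\|Du^\eta\|_\infty\leq C$, I would deduce that $|q|\leq C':=C_0(1+C^{m-1})$ on the support of $\gamma^\eta$, uniformly in $\eta$. In particular, each $\gamma^\eta$ is supported in the compact set $\T^n\times\overline{B_{C'}(0)}\times[t_0,t_1]$ and
\[
\int_{\T^n\times\R^n\times[t_0,t_1]} |q|^\zeta \,d\gamma^\eta(z,q,s) \leq (t_1-t_0)(C')^\zeta,
\]
providing the required uniform $\zeta$-moment bound.

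Since $\{\gamma^\eta\}_{\eta\in(0,1)}$ is a family of non-negative Radon measures with uniformly bounded total mass and uniformly compact support, the Banach--Alaoglu/Prokhorov theorem yields a sequence $\eta_j\to 0$ and a measure $\gamma\in\cR^+(\T^n\times\R^n\times[t_0,t_1])$ with $\gamma^{\eta_j}\rightharpoonup\gamma$ weakly in the sense of measures. Setting $\nu_1^{\eta_j}:=\nu_1$ for every $j$ (so $\nu_1^{\eta_j}\rightharpoonup\nu_1$ holds trivially), all three conditions in the definition of $\widetilde\cH(\nu_1;t_0,t_1)$ are verified, hence $\gamma\in\widetilde\cH(\nu_1;t_0,t_1)$ and the set is non-empty.

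The only nontrivial step is the uniform-in-$\eta$ Lipschitz bound on $u^\eta$: once this is available the moment bound and the compactness argument are routine. This bound is standard under Assumptions \ref{A1}--\ref{A3} (in particular the coercivity and growth conditions in \ref{A3}), and can be obtained via Bernstein-type arguments or by the references \cite{A-T, LMT} cited in the paper.
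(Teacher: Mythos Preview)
Your proof is correct and follows essentially the same approach as the paper: invoke Lemma \ref{L1B} to produce $\gamma^\eta\in\cH^\eta(\nu_0^\eta,\nu_1;t_0,t_1)$, use the uniform-in-$\eta$ Lipschitz bound on $u^\eta$ to control the support (hence the $\zeta$-moment) of $\gamma^\eta$, and extract a weakly convergent subsequence. The paper's version is terser and does not spell out the moment bound or the compactness argument, but the content is the same.
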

\begin{proof}
Let $\nu_0^{\ep}\in \cP(\Tt^n)$ and $\gamma^\ep\in\cH^\ep(\nu_0^{\ep},\nu_1;t_0,t_1)$ be measures 
given by Lemma \ref{L1B}. 

Because
\[
\|Du^\ep(\cdot,t)\|_{\Li(\T^n)}\le C\quad \text{for some} \ C>0, \ \text{for all} \ t\in[t_0,t_1], 
\]
there exists a sequence $\{\ep_j\}\to0$ such that 
\begin{equation}\label{subseq1B}
\nu_0^{\ep_j}\rightharpoonup \nu_0\in\cP(\T^n), \quad
\gam^{\ep_j}\rightharpoonup \gam\in\cR(\T^n\times\R^n\times[t_0,t_1])  
\quad\text{as} \ \ j\to\infty,  
\end{equation}
weakly in terms of measures on $\T^n$ and $\T^n\times\R^n\times[t_0,t_1]$, respectively. 
Thus, $\gam\in\widetilde\cH(\nu_0,\nu_1;t_0,t_1)$, which implies the conclusion.  
\end{proof}

Finally, we use Lemma \ref{L1B} to establish the opposite inequality to the one
in Lemma \ref{VLB}.

\begin{lem}\label{lem:ineq1B}
	For any $\nu\in\cP(\T^n)$ and $t>0$, we have
	\[
	\int_{\T^n}u(z,t)\,d\nu(z)
	\ge 
	\inf_{\gam\in \widetilde\cH(\nu;0,t)}
	\left\{
	\int_{\T^n\times\R^n\times[0,t]}L(z,q)\,d\gam(z,q,s)+\int_{\T^n}u_0(z)\,d\nu^\gam(z)
	\right\}. 
	\] 
\end{lem}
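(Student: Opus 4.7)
The plan is to adapt the argument from Lemma \ref{lem:ineq1} in the main text, with the crucial modifications being (i) the adjoint computation must accommodate the full matrix $\tr(A(x)D^2\cdot)$ term alongside the regularizing $\ep\Delta$, and (ii) the limiting object lies in $\widetilde\cH$ rather than in $\cH^\ep$ for any fixed $\ep$. First I would apply Lemma \ref{L1B} to produce, for each $\ep>0$, a probability measure $\nu_0^\ep\in\cP(\T^n)$ and a measure $\gamma^\ep\in\cH^\ep(\nu_0^\ep,\nu;0,t)$ concentrated on $\{q=D_pH(x,Du^\ep(x,s))\}$. Writing $\gamma^\ep$ in the disintegrated form $d\gamma^\ep(x,q,s)=d\gamma^\ep_s(x,q)\,ds$ with $\gamma^\ep_s$ the pushforward of $\sigma^\ep(\cdot,s)\,dx$ under $x\mapsto(x,D_pH(x,Du^\ep))$, the Legendre identity $L(x,D_pH(x,p))=p\cdot D_pH(x,p)-H(x,p)$ yields
\[
\int_{\T^n\times\R^n} L(x,q)\,d\gamma^\ep_s(x,q)=\int_{\T^n}\bigl(D_pH(x,Du^\ep)\cdot Du^\ep-H(x,Du^\ep)\bigr)\sigma^\ep(x,s)\,dx.
\]

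Next I would integrate by parts against the adjoint equation \eqref{eq:adjoint-Appen} and use \eqref{eq:apB}. Writing $-\div(D_pH(x,Du^\ep)\sigma^\ep)=\sigma^\ep_t+(a^{ij}\sigma^\ep)_{x_ix_j}+\ep\Delta\sigma^\ep$, moving derivatives from $\sigma^\ep$ onto $u^\ep$, and invoking the identity $\tr(A D^2u^\ep)+\ep\Delta u^\ep-H(x,Du^\ep)=u^\ep_t$ from \eqref{eq:apB}, everything collapses to $\int_{\T^n}(u^\ep\sigma^\ep)_s\,dx$, exactly as in the proof of Lemma \ref{lem:ineq1}. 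Integrating in $s\in[0,t]$ and using $\sigma^\ep(\cdot,t)=\nu$ and $\sigma^\ep(\cdot,0)\,dx=d\nu_0^\ep$ gives
\[
\int_{\T^n\times\R^n\times[0,t]}L(x,q)\,d\gamma^\ep(x,q,s)=\int_{\T^n}u^\ep(z,t)\,d\nu(z)-\int_{\T^n}u_0(z)\,d\nu_0^\ep(z).
\]

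Then I would pass to the limit along a subsequence $\ep_j\to 0$. The uniform Lipschitz bound $\|Du^\ep\|_{L^\infty}\le C$ (independent of $\ep$) combined with the growth condition on $D_pH$ in Assumption \ref{A3} forces the $q$-marginal supports of all $\gamma^\ep$ to lie in a single compact set, so in particular $\int|q|^\zeta d\gamma^\ep$ is uniformly bounded. Extracting a subsequence as in \eqref{subseq1B}, we obtain $\nu_0^{\ep_j}\rightharpoonup\nu_0\in\cP(\T^n)$ and $\gamma^{\ep_j}\rightharpoonup\gamma$, with $\int L\,d\gamma^{\ep_j}\to\int L\,d\gamma$ by the uniform $q$-support bound, and $\int u^{\ep_j}(\cdot,t)\,d\nu\to\int u(\cdot,t)\,d\nu$ by locally uniform convergence. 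By definition of $\widetilde\cH(\nu;0,t)$, the limit $\gamma$ belongs to $\widetilde\cH(\nu;0,t)$ with $\nu^\gamma=\nu_0$, so
\[
\int_{\T^n}u(z,t)\,d\nu(z)=\int_{\T^n\times\R^n\times[0,t]}L\,d\gamma+\int_{\T^n}u_0(z)\,d\nu^\gamma(z),
\]
and taking the infimum over $\widetilde\cH(\nu;0,t)$ yields the stated inequality.

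The main obstacle I expect is the extra term $\ep\Delta$ appearing in the approximate equation \eqref{eq:apB} and in the adjoint \eqref{eq:adjoint-Appen}, which must be managed so that the integration-by-parts step still produces a clean derivative $(u^\ep\sigma^\ep)_s$; this is where the specific choice of the viscous regularization matters, since the term $\ep\Delta u^\ep$ coming from the PDE for $u^\ep$ cancels exactly against the contribution $\ep\Delta\sigma^\ep$ coming from the adjoint. A secondary technical point is verifying that the limiting $\gamma$ really is admissible in the sense of $\widetilde\cH$, which is where the uniform control of $\zeta$-moments from the uniform Lipschitz bound on $u^\ep$ is essential.
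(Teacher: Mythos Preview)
Your proposal is correct and follows essentially the same approach as the paper's proof: construct $\gamma^\ep$ via Lemma \ref{L1B}, use the Legendre identity and integration by parts against the adjoint equation \eqref{eq:adjoint-Appen} to obtain the exact identity $\int L\,d\gamma^\ep=\int u^\ep(\cdot,t)\,d\nu-\int u_0\,d\nu_0^\ep$, then pass to the limit along a subsequence as in \eqref{subseq1B}. Your additional remarks on the uniform $\zeta$-moment bound and on why the limiting $\gamma$ lies in $\widetilde\cH(\nu;0,t)$ make explicit points that the paper leaves implicit, but the argument is otherwise the same.
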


\begin{proof}
	For $s\in[0,t]$, let $\gam^\ep_s$ be the measure constructed in the proof of Lemma \ref{L1B}	for $t_0=0$ and $t_1=t$.
	By the properties of the Legendre  transform
	\[L(z,q)=D_pH(z,D_qL(z,q))\cdot D_qL(z,q)-H(z,D_qL(z,q)).
	\] 
	Therefore, 
	\begin{align*}
	&\int_{\T^n \times \R^n} L(z,q)\,d\gam^{\ep}_s(z,q)\\
	=&\,
	\int_{\T^n \times \R^n} 
	(D_pH(z,D_qL(z,q))\cdot D_qL(z,q)-H(z,D_qL(z,q)))\,d\gam^{\ep}_s(z,q)\\ 
	=&\,
	\int_{\T^n \times \R^n} 
	(D_pH(z,p)\cdot p -H(z,p))\,d\beta^{\ep}_s(z,p)\\
	=&\,
	\int_{\T^n} 
	(D_pH(x,Du^{\ep})\cdot Du^\ep -H(x,Du^{\ep}))\sig^{\ep}(x,s)\,dx 
	\end{align*}
	for all $s\in[0,t]$. 
	Moreover, integrating by parts and using the adjoint equation and \eqref{eq:apB}, we obtain
	\begin{align*}
	&
	\int_{\T^n} 
	(D_pH(x,Du^{\ep})\cdot Du^{\ep} -H(x,Du^{\ep}))\sig^{\ep}\,dx\\
	=&\,
	\int_{\T^n} 
	-\div(D_pH(x,Du^{\ep})\sig^{\ep})u^{\ep}-H(x,Du^{\ep})\sig^{\ep}\,dx\\
	=&\,
	\int_{\T^n} 
	(\sig^{\ep}_t+\ep\Del\sig^\ep+(a^{ij}\sig^\ep)_{x_ix_j})u^{\ep}
	+(u^\ep_t-\ep\Del u^\ep-a^{ij}u^\ep_{x_ix_j})\sig^{\ep}\,dx\\
	=&\,
	\int_{\T^n} 
	(u^\ep\sig^{\ep})_t\,dx. 
	\end{align*}
	
Integrating on $[0,t]$ yields 
\begin{align*}
\int_{\T^n \times \R^n\times[0,t]} L(x,q)\,d\gam^{\ep}(x,q,s)
&=\,
\int_0^t\int_{\T^n \times \R^n} L(x,q)\,d\gam^{\ep}_s(x,q)ds\\
&=\, 
\int_{\T^n}u^\ep(x,t)\,d\nu
-\int_{\T^n} u_0(x)\,d\nu^\ep_0.  
\end{align*}	
Taking subsequences $\{\gam^{\ep_j}\}$ and $\{\nu_0^{\ep_j}\}$ as in \eqref{subseq1B} yields  
\[
\int_{\T^n}u(z,t)\,d\nu(z)
=\int_{\T^n \times \R^n \times [0,t]} L(z,q)\,d\gam(z,q,s)+
\int_{\T^n} u_0(z)\,d\nu_0(z). 
\]
Because 
$\gam\in\widetilde \cH(\nu_0,\nu;0,t)$,
we obtain the inequality claimed in the statement. 
\end{proof}

\begin{proof}[Proof of Theorem \ref{thm:rep B}]
The statement follows directly by combining Lemma \ref{L1B} with Lemma \ref{lem:ineq1B}.
\end{proof}

\subsection{Approximation and proof of Proposition \ref{prop:erg}}\label{subsec:approx}

We  first construct an approximation of viscosity solutions of \eqref{eq:1B} by $C^2$-subsolutions of an approximate equation. 
We begin by recalling the definition of the sup and inf-convolutions and some of their basic properties.

Let $w:\Tt^n\times [0,T]\to \Rr$ be a continuous function. 
The sup-convolution, $w^\ep$,  
and inf-convolution, $w_\ep$, of $w$ with respect to $x$ for $\ep>0$ are defined by 
\begin{align*}
&w^\ep(x,t):=\sup_{y\in\T^n}\left[u(y,t)-\frac{|x-y|^2}{2\ep}\right], \quad 
w_\ep(x,t):=\inf_{y\in\T^n}\left[u(y,t)+\frac{|x-y|^2}{2\ep}\right]. 
\end{align*}

\begin{proposition}\label{prop:basic} 
Let $w\in\Lip(\T^n\times[0,\infty))$, and set $L:=\|Dw\|_{L^\infty(\T^n\times(0,\infty))}$.   
We have
\begin{align*}
&
w^\ep(x,t)=\max_{ |x-y|\le 2L\ep}\left\{w(y,t)-\frac{|x-y|^2}{2\ep}\right\}, \quad
w_\ep(x,t)=\min_{ |x-y|\le 2L\ep}\left\{w(y,t)+\frac{|x-y|^2}{2\ep}\right\}.
\end{align*}
Moreover, $\|Dw^\ep\|_{L^\infty(\T^n\times(0,\infty))}\le 2L$ and 
$\|Dw_\ep\|_{L^\infty(\T^n\times(0,\infty))}\le 2L$.  

\end{proposition}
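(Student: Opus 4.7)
The plan is to derive both statements by elementary comparison arguments that exploit the $L$-Lipschitz continuity of $w$ and the convexity of the penalty $y\mapsto |x-y|^2/(2\ep)$. I will focus on $w^\ep$; the statements for $w_\ep$ then follow either by applying the same argument to $-w$ or by mirroring the computation (using $w_\ep(x,t)\le w(x,t)$).

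For the localization of the maximizer, testing with $y=x$ gives the lower bound $w^\ep(x,t)\ge w(x,t)$. Therefore any maximizing sequence (or near-maximizer) $y^*$ satisfies $w(y^*,t)-|x-y^*|^2/(2\ep)\ge w(x,t)$, and invoking the $L$-Lipschitz bound on $w$ yields
\[
\frac{|x-y^*|^2}{2\ep}\le w(y^*,t)-w(x,t)\le L\,|x-y^*|,
\]
so $|x-y^*|\le 2L\ep$. This simultaneously shows the supremum is attained (by continuity of the integrand and compactness of the closed ball of radius $2L\ep$ in $\T^n$) and localizes the maximizer in that ball.

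For the Lipschitz bound on $w^\ep$, I would pick a maximizer $y_1^*$ for $x_1$, use the trivial lower bound
\[
w^\ep(x_2,t)\ge w(y_1^*,t)-\frac{|x_2-y_1^*|^2}{2\ep},
\]
and subtract the identity $w^\ep(x_1,t)=w(y_1^*,t)-|x_1-y_1^*|^2/(2\ep)$. Expanding $|x_2-y_1^*|^2=|x_1-y_1^*|^2+2(x_1-y_1^*)\cdot(x_2-x_1)+|x_2-x_1|^2$ and using the localization $|x_1-y_1^*|\le 2L\ep$ produces
\[
w^\ep(x_2,t)-w^\ep(x_1,t)\ge -\frac{(x_1-y_1^*)\cdot(x_2-x_1)}{\ep}-\frac{|x_2-x_1|^2}{2\ep}\ge -2L\,|x_2-x_1|-\frac{|x_2-x_1|^2}{2\ep}.
\]
By symmetry in $(x_1,x_2)$ one obtains $|w^\ep(x_1,t)-w^\ep(x_2,t)|\le 2L\,|x_1-x_2|+|x_1-x_2|^2/(2\ep)$.

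The main obstacle is that this pointwise estimate carries a quadratic remainder, so it does not immediately yield the clean bound $\|Dw^\ep\|_{L^\infty}\le 2L$. To remove the remainder, I would chain the inequality along a minimizing geodesic from $x_1$ to $x_2$ in $\T^n$, partitioning it into $N$ equal subintervals of length $|x_1-x_2|/N$ and summing, which gives $|w^\ep(x_1,t)-w^\ep(x_2,t)|\le 2L\,|x_1-x_2|+|x_1-x_2|^2/(2\ep N)$; letting $N\to\infty$ yields the desired Lipschitz constant $2L$. (Alternatively, one may note that $w^\ep(\cdot,t)+|\cdot|^2/(2\ep)$ is convex in the universal cover, so $w^\ep$ is semi-convex, differentiable a.e., and the difference quotient inequality transfers directly to an $L^\infty$ bound on $Dw^\ep$.) The analogous argument for $w_\ep$ completes the proof.
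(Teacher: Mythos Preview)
Your proof is correct and follows essentially the same strategy as the paper. Both localize the maximizer via the Lipschitz bound on $w$, then compare $w^\ep$ at two points using the maximizer for one of them, obtaining a bound of the form $2L|x_1-x_2|$ plus a quadratic remainder in $|x_1-x_2|$. The only cosmetic difference is how the remainder is disposed of: the paper restricts to $|x-y|\le C\ep$ and reads off the gradient bound $2L$ by letting $C\to 0$, while you chain along a fine subdivision of a geodesic and let $N\to\infty$; these are equivalent ways of passing from the local difference quotient to $\|Dw^\ep\|_{L^\infty}\le 2L$.
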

\begin{proof}
We only give a proof for $w^{\ep}$. 
Take $x,y\in\T^n$ so that $2L\ep <|x-y|$. Since 
$w(y,t)-w(x,t)\le L|x-y|<\frac{|x-y|^2}{2\ep}$, 
we have 
$w(y,t)-\frac{|x-y|^2}{2\ep}<w(x,t)$, which implies the first claim.

To get the Lipschitz estimate for $w^\ep$, 
for a fixed $x\in\T^n$, take $z_x\in\ol{B}(x,2L\ep)$ so that 
$w^\ep(x,t)=w(z_x,t)-\frac{|x-z_x|^2}{2\ep}$. 
Then, 
\begin{align*}
w^\ep(x,t)-w^\ep(y,t)
&\le\, 
\left(w(z_x,t)-\frac{|x-z_x|^2}{2\ep}\right)
-\left(w(z_x,t)-\frac{|y-z_x|^2}{2\ep}\right) \\
&=\, 
\frac{(y-x)\cdot (x+y-2z_x)}{2\ep}
\le
\frac{|x-y|(|x-z_x|+|y-z_x|)}{2\ep}\\
&\le\, 
\frac{4L+C}{2}|x-y|
\end{align*}
if $|x-y|\le C\ep$ for any $C>0$,   
which implies the conclusion. 
\end{proof}

It is well known that 
the inf-sup convolution $(w^{\ep+\del})_\del$ for $\ep, \del>0$  gives a $C^{1,1}$ approximation of $w$ in $x$ (see \cite{LL} for instance). 
\begin{proposition}\label{prop:double-conv1}
We have $(w^{\ep+\del})_\del\in \Lip([0,\infty); C^{1,1}(\T^n))$. 
Moreover, for each $t>0$,
\begin{equation}\label{est:Hessian}
-\frac{1}{\ep} I\le D^2(w^{\ep+\del})_\del(x,t) \le \frac{1}{\del} I  
\quad\text{for} \ a.e. \ x\in\R^n.   
\end{equation}
\end{proposition}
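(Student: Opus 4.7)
The plan is to prove the two sides of the Hessian bound separately, since they come from genuinely different arguments, and then handle the Lipschitz regularity in $t$ as a direct consequence of the corresponding property of $w$.

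The upper bound is the easier one. Since
\[
(w^{\ep+\del})_\del(x,t) - \frac{|x|^2}{2\del} = \inf_{y\in\T^n}\left[ w^{\ep+\del}(y,t) + \frac{|y|^2}{2\del} - \frac{x \cdot y}{\del}\right],
\]
this expression is an infimum of affine functions of $x$, hence concave in $x$. Equivalently, $(w^{\ep+\del})_\del(\cdot,t)$ is $\del$-semiconcave, which immediately yields $D^2(w^{\ep+\del})_\del(x,t) \le \frac{1}{\del} I$ a.e.

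The lower bound is the main step. First observe that $w^{\ep+\del}$ is $(\ep+\del)$-semiconvex: the function $v(y,t):=w^{\ep+\del}(y,t)+\frac{|y|^2}{2(\ep+\del)}$ is convex in $y$ because it equals $\sup_z[w(z,t)+\frac{z\cdot y}{\ep+\del}-\frac{|z|^2}{2(\ep+\del)}]$, a supremum of affine functions. Now I would use the algebraic identity
\[
\frac{|x-y|^2}{2\del}+\frac{|x|^2}{2\ep}-\frac{|y|^2}{2(\ep+\del)}=\frac{\ep+\del}{2\ep\del}\left|x-\frac{\ep}{\ep+\del}y\right|^2,
\]
which can be verified by direct expansion. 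This identity allows me to rewrite
\[
(w^{\ep+\del})_\del(x,t)+\frac{|x|^2}{2\ep}=\inf_{y\in\T^n}\left[v(y,t)+\frac{\ep+\del}{2\ep\del}\left|x-\tfrac{\ep}{\ep+\del}y\right|^2\right].
\]
After the substitution $z=\frac{\ep}{\ep+\del}y$, the right-hand side is the inf-convolution (in the variable $z$) of the convex function $z\mapsto v(\frac{\ep+\del}{\ep}z,t)$ with the convex quadratic $\frac{\ep+\del}{2\ep\del}|x-z|^2$. Since the inf-convolution of two convex functions is convex, $(w^{\ep+\del})_\del(\cdot,t)+\frac{|\cdot|^2}{2\ep}$ is convex in $x$, giving $D^2(w^{\ep+\del})_\del(x,t)\ge -\frac{1}{\ep}I$ a.e.

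Combining the two Hessian bounds, $(w^{\ep+\del})_\del(\cdot,t)$ is both $\ep$-semiconvex and $\del$-semiconcave, hence $C^{1,1}(\T^n)$ in $x$ with a uniform bound on $\|D^2(w^{\ep+\del})_\del(\cdot,t)\|_{L^\infty}$ depending only on $\min(\ep,\del)$ (Alexandrov's theorem). For the Lipschitz dependence on $t$, note that if $L_t$ is the Lipschitz constant of $w$ in $t$, then for any $x,t_1,t_2$, from the definition of the sup-convolution and the triangle inequality applied at the optimal $y$, one has $|w^{\ep+\del}(x,t_1)-w^{\ep+\del}(x,t_2)|\le L_t|t_1-t_2|$, and the same passes through the inf-convolution. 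Combined with the uniform-in-$t$ $C^{1,1}$ bound, this yields $(w^{\ep+\del})_\del\in\Lip([0,\infty);C^{1,1}(\T^n))$. The main obstacle is the lower Hessian bound, specifically spotting the algebraic identity above that lets one interpret the double convolution as a single inf-convolution of convex objects; everything else is essentially bookkeeping.
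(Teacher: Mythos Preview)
Your argument is correct and takes a genuinely different route from the paper's. The paper obtains the upper Hessian bound the same way you do (semiconcavity of an inf-convolution), but for the lower bound it invokes the semigroup property $w^{\ep+\del}=(w^\ep)^\del$, writes $(w^{\ep+\del})_\del=((w^\ep)^\del)_\del$, and then cites an external result (Lasry--Lions, \cite[Proposition~4.5]{CKSS}) asserting that the operation $f\mapsto (f^\del)_\del$ preserves the $\ep$-semiconvexity inherited from $w^\ep$. Your approach instead gives a self-contained computation: the algebraic identity rewrites $(w^{\ep+\del})_\del+\tfrac{|\cdot|^2}{2\ep}$ as an inf-convolution of two convex functions, yielding convexity directly with no appeal to the Lasry--Lions literature. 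The paper's route is shorter to write because it outsources the work to a reference; yours is more elementary and transparent, and makes the constant $1/\ep$ appear naturally from the quadratic completion rather than from a black box. One small remark: the $C^{1,1}$ conclusion from simultaneous semiconcavity and semiconvexity is more elementary than Alexandrov's theorem (it follows from the fact that the superdifferential and subdifferential coincide and vary Lipschitz-continuously), so you need not invoke Alexandrov here.
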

\begin{proof}
It is clear that $(w^{\ep+\del})_\del$ is $(1/2\del)$-semiconcave. 
Because the inf and sup convolutions satisfy the semigroup property, that is,
$w^{\ep+\del}=(w^{\ep})^\del$, $w_{\ep+\del}=(w_{\ep})_\del$ for $\ep, \del>0$,  
we have 
\[
(w^{\ep+\del})_\del=((w^\ep)^\del)_\del, 
\] 
$(w^{\ep+\del})_\del$ is $(1/2\ep)$-semiconvex in light of \cite[Proposition 4.5]{CKSS}. 
Therefore, 
$(w^{\ep+\del})_\del\in \Lip([0,\infty); C^{1,1}(\T^n))$, and \eqref{est:Hessian} follows. 
\end{proof}

\begin{lem}\label{lem:eta}
Let $w$ be a Lipschitz subsolution to \eqref{eq:1B}.
Then, there exists a modulus of continuity
 $\omega\in C([0,\infty))$,  nondecreasing and with $\omega(0)=0$,
a function  $\del_0(\ep,\eta)$, defined for $\ep, \eta>0$, 
such that  
for $\ep, \eta>0$, $\del_0=\del_0(\ep,\eta)$
and any 
 $\del\in(0,\del_0]$, 
 and any $(x,t) \in \T^n \times (0,\infty)$
 where 
 $(w^{\ep+\del})_\del$ is twice differentiable in $x$ and  differentiable in $t$
 we have  at $(x,t)$ the following inequaliy
	\begin{equation*}%\label{eq:eta}
	((w^{\ep+\del})_\del)_t -\eta\Del (w^{\ep+\del})_\del-\tr(A(x)D^2(w^{\ep+\del})_\del)+H(x,D(w^{\ep+\del})_\del)\le \om(\ep)+\frac{n\eta}{\ep}.
	\end{equation*}

\end{lem}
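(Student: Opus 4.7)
The plan is to propagate the viscosity subsolution property of $w$ through the sup- and inf-convolutions and then absorb the artificial viscosity term using the one-sided Hessian bound of Proposition \ref{prop:double-conv1}. In the first step, I would use the standard shift-of-test-function argument at the maximizer $y^*(x,t)$ in the definition of $w^{\ep+\delta}$: any $C^2$ test function touching $w^{\ep+\delta}$ from above at $(x,t)$ produces, after the translation $y\mapsto y+(x-y^*(x,t))$, a test function touching $w$ from above at $(y^*(x,t),t)$. Combined with the bound $|x-y^*(x,t)|\leq 2L(\ep+\delta)$ from Proposition \ref{prop:basic} and the uniform continuity of $A$ and $H$, this should yield that $w^{\ep+\delta}$ is a viscosity subsolution of
\[
(w^{\ep+\delta})_t - \tr\bigl(A(x) D^2 w^{\ep+\delta}\bigr) + H(x, D w^{\ep+\delta}) \leq \omega_1(\ep+\delta),
\]
for some modulus $\omega_1$ depending on $A$, $H$, and the Lipschitz constant $L$ of $w$. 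By Alexandrov's theorem (applicable since $w^{\ep+\delta}$ is $(\ep+\delta)^{-1}$-semiconvex), this inequality would hold pointwise a.e.

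Next, setting $v:=(w^{\ep+\delta})_\delta$ and fixing $(x_0,t_0)$ where $v$ is twice differentiable in $x$ and differentiable in $t$, I would introduce the inf-convolution minimizer $y^{**}=y^{**}(x_0,t_0)$. Proposition \ref{prop:basic} applied to $w^{\ep+\delta}$ (whose Lipschitz constant is at most $2L$) gives $|x_0-y^{**}|\leq 4L\delta$, and the envelope identities produce $v_t(x_0,t_0)=(w^{\ep+\delta})_t(y^{**},t_0)$ and $Dv(x_0,t_0)=Dw^{\ep+\delta}(y^{**},t_0)$. The Moreau--Yosida formula links the Hessians as $D^2 v(x_0,t_0)=N(I+\delta N)^{-1}$ where $N:=D^2 w^{\ep+\delta}(y^{**},t_0)$, giving in particular the matrix inequality $D^2 v(x_0,t_0)\leq N$; coupled with $A(x_0)\geq 0$, this yields $\tr(A(x_0) D^2 v(x_0,t_0)) \leq \tr(A(x_0) N)$. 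Combining these identities with the pointwise version of the Step~1 inequality at $(y^{**},t_0)$ and controlling the shift errors through the continuity of $A$ and $H$ should produce
\[
v_t(x_0,t_0) - \tr\bigl(A(x_0) D^2 v(x_0,t_0)\bigr) + H(x_0, Dv(x_0,t_0)) \leq \omega(\ep),
\]
provided $\delta\leq \delta_0(\ep,\eta)$ is chosen sufficiently small.

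The last step is to absorb the artificial viscosity: Proposition \ref{prop:double-conv1} gives $D^2 v(x_0,t_0)\geq -\tfrac{1}{\ep}I$, hence $-\eta\Del v(x_0,t_0)\leq \tfrac{n\eta}{\ep}$, and adding this to the previous display yields the claim. The hard part will be the error control in the intermediate step: the naive product $|A(x_0)-A(y^{**})|\cdot \|D^2 v(x_0,t_0)\|$ scales like $\delta\cdot \delta^{-1}$, which does not vanish in $\delta$ alone, so one must exploit the identity $N - D^2 v(x_0,t_0) = \delta N\,D^2 v(x_0,t_0)$ together with the nonnegativity of $A$ to rewrite the problematic term in a form that can be absorbed into a modulus $\omega(\ep)$ after choosing $\delta_0(\ep,\eta)$ small enough. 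This matrix-level estimate is the technical heart of the argument and determines the precise form of $\delta_0$.
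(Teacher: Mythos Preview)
Your plan has a sign error at its core. Inf-convolution preserves the \emph{supersolution} property, not the subsolution property, for degenerate elliptic operators: the inequality $D^2 v(x_0,t_0)\le N$ that you derive from Moreau--Yosida goes the wrong way. To pass from the Step~1 inequality at $(y^{**},t_0)$ to the desired inequality for $v$ at $(x_0,t_0)$ you would need
\[
\tr\bigl(A(y^{**})N\bigr)-\tr\bigl(A(x_0)D^2v(x_0,t_0)\bigr)\le \text{(small error)},
\]
but the decomposition $\tr(A(y^{**})(N-D^2v))+\tr((A(y^{**})-A(x_0))D^2v)$ has a first term that is \emph{nonnegative} and unbounded whenever $N$ has a large eigenvalue. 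The identity $N-D^2v=\delta N D^2v$ does not help here, and no choice of $\delta_0(\ep,\eta)$ can absorb this into $\omega(\ep)$ in an inequality that does not yet contain the $-\eta\Delta$ term. So the ordering ``first prove the $\eta$-free inequality, then add $-\eta\Delta v\le n\eta/\ep$'' cannot succeed.

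The paper's argument avoids this obstruction altogether. It uses the semigroup identity $(w^{\ep+\delta})_\delta=((w^\ep)^\delta)_\delta\ge w^\ep$ and splits into two cases at a point of twice differentiability. If $\tilde w=(w^{\ep+\delta})_\delta$ touches $w^\ep$ there, any $C^2$ function touching $\tilde w$ from above also touches $w^\ep$ from above, so the viscosity subsolution inequality for $w^\ep$ (which already includes $-\eta\Delta w^\ep\le n\eta/\ep$ from semiconvexity) transfers directly. If $\tilde w>w^\ep$, then by \cite[Proposition~4.4]{CKSS} the matrix $D^2\tilde w$ has an eigenvalue equal to $1/\delta$; together with the bound $D^2\tilde w\ge -\ep^{-1}I$ this makes $-\eta\Delta\tilde w\le -\eta/\delta+(n-1)\eta/\ep$, and for $\delta\le\delta_0(\ep,\eta)$ this large negative term dominates all the remaining (bounded) terms, forcing the left-hand side to be $\le 0$. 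The $-\eta\Delta$ term is thus not an afterthought but the mechanism that handles the non-contact set; this is precisely what determines the explicit formula for $\delta_0(\ep,\eta)$.
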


\begin{proof}
We first notice that by standard  viscosity solution theory, $w^\ep$ satisfies 
\[
w^\ep_t-\tr(A(x)D^2w^\ep)+H(x,Dw^\ep)\le \om(\ep) 
\]
in the sense of viscosity solutions for some nondecreasing function $\omega\in C([0,\infty))$ with 
$\omega(0)=0$. 
Furthermore, because  $w^{\ep}+|x|^2/(2\ep)$ is convex, 
we see that 
$-\Delta w^{\ep}\le n/\ep$ in $\T^n\times(0,\infty)$ in the sense of viscosity solutions. 
Thus,
\[
w^\ep_t-\eta\Del w^\ep-\tr(A(x)D^2w^\ep)+H(x,Dw^\ep)\le \om(\ep)+\frac{n\eta}{\ep}   
\]
in the sense of viscosity solutions.

Let $\tilde w=(w^{\ep+\del})_\del$. Note that $\tilde w\geq w^\ep$ on $\T^n$. 
Now, let $(\hat{x},\hat{t})\in\T^n\times(0,\infty)$ be a point where $\tilde w$ is twice differentiable in $x$ and  differentiable in $t$. 
Select a function, $\varphi\in C^2(\T^n\times(0,\infty))$, such that 
	$\tilde w-\varphi$ has a maximum at $(\hat{x},\hat{t})$. 
At this point either $\tilde w(\hat{x},\hat{t})=w^{\ep}(\hat{x},\hat{t})$ or
$\tilde w(\hat{x},\hat{t})>w^{\ep}(\hat{x},\hat{t})$.
In the first alternative, that is, when 
$\tilde w(\hat{x},\hat{t})=w^{\ep}(\hat{x},\hat{t})$, 
 $w^\ep-\varphi$ has a maximum at $(\hat{x}, \hat{t})$.
	Thus, 
\begin{align*}
&\tilde{w}_t(\hat{x},\hat{t})-\eta\Del \tilde{w}(\hat{x},\hat{t})-\tr(A(\hat{x})D^2\tilde{w}(\hat{x},\hat{t}))+H(x,D\tilde{w}(\hat{x},\hat{t}))\\
=&\, 
\varphi_t(\hat{x},\hat{t})-\eta\Del \varphi(\hat{x},\hat{t})-\tr(A(\hat{x})D^2\varphi(\hat{x},\hat{t}))+H(x,D\varphi(\hat{x},\hat{t}))
\le \om(\ep)+\frac{n\eta}{\ep}. 
\end{align*}

In the second alternative, that is, if $\tilde w(\hat{x},\hat{t})>w^\ep(\hat{x},\hat{t})$, 
	by \cite[Proposition 4.4]{CKSS}, 
	$1/\del$ is one of the eigenvalues of $D^2\tilde{w}(\hat{x},\hat{t})$.  
	Moreover, by using \eqref{est:Hessian}, we get 
	\[
	\eta\Del \tilde{w}(\hat{x},\hat{t})\ge \eta\left(\frac{1}{\del}-\frac{n-1}{\ep}\right). 
	\]
	Letting $\Lam:=\max_{x\in\T^n}\eig(A(x))\ge0$, 
	where $\eig(A(x))$ are the eigenvalues of $A(x)$, we similarly have 
	\[
	\tr(A(\hat{x})D^2\tilde{w}(\hat{x},\hat{t}))\ge -\frac{(n-1)\Lam}{\ep}. 
	\]
Set $L:=\|w_t\|_{\Li(\T^n)}+\|Dw\|_{\Li(\T^n)}$, and 
$\bar C=\max_{|p|\leq 2L}H(x,p)$.  
Moreover, we set
	\begin{equation}
	\label{mdelta0}
 \del_0(\ep,\eta)=\frac{\ep\eta}{(n-1)(\Lam+\eta)+(L+\bar C)\ep}.  
	\end{equation}

Combining the preceding estimates, for all $0<\del<\del_0=\del_0(\ep,\eta)$, 
we get 
\begin{align*}
&
\tilde{w}_t(\hat{x},\hat{t})-\eta\Del \tilde{w}(\hat{x},\hat{t})-\tr(A(\hat{x})D^2\tilde{w}(\hat{x},\hat{t}))+H(x,D\tilde{w}(\hat{x},\hat{t}))\\
\le&\, 
L-\eta\left(\frac{1}{\del}-\frac{n-1}{\ep}\right)+\frac{(n-1)\Lam}{\ep}
+\bar C
= 
-\frac{\eta}{\del}+\frac{(n-1)(\Lam+\eta)+(L+\bar C)\ep}{\ep} \leq 0, 
\end{align*}
by the choice of $\delta_0$ in \eqref{mdelta0}, 
which finishes the proof. 
\end{proof}

\medskip
Next, we regularize $(w^{\ep+\del})_\del$ further by using standard mollifiers to obtain a
$C^2$ subsolution.   
Let $\theta \in C_c^\infty(\R^{n},[0,\infty))$ and $\rho \in C_c^\infty(\R,[0,\infty))$
be symmetric standard mollifiers; that is, 
$\supp \theta\subset\ol{B}(0,1)\subset\R^{n}$, $\supp \rho\subset\ol{B}(0,1)\subset\R$, $\theta(x)=\theta(-x)$, $\rho(s)=\rho(-s)$,
and $\|\theta\|_{L^{1}(\R^{n})}=\|\rho\|_{L^{1}(\R)}=1$. 
For each $\alpha>0$, set $\theta^\alpha(x):=\alpha^{-n} \theta(\alpha^{-1}x)$ for $x\in \R^{n}$, 
and $\rho^{\alpha}(t):=\alpha^{-1} \rho(\alpha^{-1}t)$ for $t\in \R$. 
We define the function $w^{\alpha,\ep,\del}\in C^\infty(\T^n\times[\al,\infty))$ by 
\begin{equation}\label{func:al-ep-del}
w^{\al,\ep,\del}(x,t)
:=
\int_{0}^{\infty} \rho^\al(s)\int_{\T^n} \theta^\al(y)(w^{\ep+\del})_\del(x-y,t-s)\,dy ds
\end{equation}
for all $(x,t)\in\T^n\times[\al,\infty)$. 

\begin{lem}\label{lem:est-R1}
Let $w\in\Lip(\T^n\times(0,\infty))$ and $w^{\al,\ep,\del}$ be given by \eqref{func:al-ep-del}. Then,
there exists a
constant  $C>0$,  independent of $\al, \ep, \del$, such that 
for all $(x,t)\in\T^n\times[\al,\infty)$, we have
\begin{enumerate}
\item[(i)] 
\begin{align*}
&\Big|
\tr(A(x)D^2w^{\al,\ep,\del}(x,t))\\
&\quad 
-\int_0^\infty\int_{\T^n}\rho^\al(s)\theta^\al(y)\tr\left(A(x-y)D^2(w^{\ep+\del})_\del(x-y,t-s)\right) \,dyds
\Big|\\
\le\,  &
C\al\max\left\{\frac{1}{\ep}, \frac{1}{\del}\right\}, 
\end{align*}
\item[(ii)]  
\begin{align*}
	&H(x,Dw^{\al,\ep,\del}(x,t))\\
	&\quad -\int_0^\infty\int_{\T^n} \rho^\al(s)\theta^\al(y)H(x-y,D(w^{\ep+\del})_\del(x-y,t-s))\,dyds\le 
C\al.
\end{align*}
\end{enumerate}

\end{lem}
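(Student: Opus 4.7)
Both estimates rest on the fact that, since $w^{\al,\ep,\del}$ is a space-time convolution of $(w^{\ep+\del})_\del$ against $\rho^\al\theta^\al$, differentiation commutes with the convolution. Concretely,
\[
D w^{\al,\ep,\del}(x,t)=\int_0^\infty\!\!\int_{\T^n}\rho^\al(s)\theta^\al(y)\, D(w^{\ep+\del})_\del(x-y,t-s)\,dy\,ds,
\]
and similarly for $D^2 w^{\al,\ep,\del}$; the latter is legitimate because $(w^{\ep+\del})_\del$ is $C^{1,1}$ in $x$ with $W^{2,\infty}$ control by Proposition \ref{prop:double-conv1}.

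For (i), using the above identity, the difference whose absolute value must be estimated equals
\[
\int_0^\infty\!\!\int_{\T^n}\rho^\al(s)\theta^\al(y)\,\tr\!\Big((A(x)-A(x-y))\,D^2(w^{\ep+\del})_\del(x-y,t-s)\Big)\,dy\,ds.
\]
Since $A\in C^2(\T^n;\bS^n)$, on the support of $\theta^\al$ one has $\|A(x)-A(x-y)\|\le C|y|\le C\al$ for a constant $C$ depending only on $A$. Combined with the two-sided Hessian bound $-\ep^{-1} I \le D^2(w^{\ep+\del})_\del \le \del^{-1} I$ from \eqref{est:Hessian}, this controls the integrand pointwise by $C\al\max\{1/\ep,1/\del\}$, and integrating against the probability density $\rho^\al\theta^\al$ yields the claim.

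For (ii), I would first commute $D$ with the mollification and then apply Jensen's inequality to the convex function $p\mapsto H(x,p)$, obtaining
\[
H(x,Dw^{\al,\ep,\del}(x,t))\le \int_0^\infty\!\!\int_{\T^n}\rho^\al(s)\theta^\al(y)\, H(x,D(w^{\ep+\del})_\del(x-y,t-s))\,dy\,ds.
\]
To pass from $H(x,\cdot)$ to $H(x-y,\cdot)$ in the integrand, I invoke Assumption \ref{A3}: since $w$ is Lipschitz, Proposition \ref{prop:basic} gives a uniform $L^\infty$ bound $\|D(w^{\ep+\del})_\del\|_\infty\le C$ independent of $\ep,\del$, so on the relevant range of $p$ the bound $|D_xH(z,p)|\le C_0(1+|p|^m)$ yields a uniform constant; therefore $|H(x,p)-H(x-y,p)|\le C|y|\le C\al$ on the support of $\theta^\al$. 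Substituting this pointwise estimate gives (ii). Neither step presents a serious obstacle; the only point requiring care is the interchange of $D^2$ with the mollification, which is justified precisely by the $C^{1,1}$-in-$x$ regularity of $(w^{\ep+\del})_\del$ established earlier.
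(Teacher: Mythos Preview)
Your proposal is correct and follows essentially the same route as the paper: for (i) you rewrite the difference as the mollifier-average of $\tr\big((A(x)-A(x-y))D^2(w^{\ep+\del})_\del\big)$ and combine the Lipschitz bound on $A$ with the Hessian estimate \eqref{est:Hessian}; for (ii) you apply Jensen's inequality and then use the uniform gradient bound from Proposition~\ref{prop:basic} together with Assumption~\ref{A3} to control the $x$-variation of $H$. The paper's argument is identical in structure, only slightly terser in justifying the interchange of derivatives with the mollification.
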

\begin{proof}
	We denote, respectively, $w^{\al,\ep,\del}$ and $(w^{\ep+\del})_\del$ by $w^\al$ and $w$ for simplicity in the proof. 
	We begin by proving the first inequality. 
	We have 
	\begin{align*}
	&\tr(A(x)D^2w^\al(x,t))
	=\, 
	\tr\left(A(x)\int_0^\infty\int_{\T^n}\rho^\al(s)\theta^\al(y)D^2w(x-y,t-s)\,dyds\right)\\
	=&\, 
	\int_0^\infty\int_{\T^n}\rho^\al(s)\theta^\al(y)\tr\left(A(x-y)D^2w(x-y)\right)\,dyds\\
	&+\int_0^\infty\int_{|y|\le\al}\rho^\al(s)\theta^\al(y)\tr\left((A(x)-A(x-y))D^2w(x-y)\right)\,dyds. 
	\end{align*}
	By Proposition \ref{prop:double-conv1}, we have, for each $t>0$,
	\[
	|D^2w(x,t)|\le C\max\left\{\frac{1}{\ep}, \frac{1}{\del}\right\} 
	\quad\text{for} \ a.e. \ x\in\T^n, 
	\]
	which implies (i). 
	
	By the convexity of $H$, Jensen's inequality
	implies
	\begin{align*}
	H(x,Dw^{\al}(x,t))
	&=\, 
	H\left(x,\int_0^\infty\int_{\T^n} \rho^\al(s)\theta^\al(y)Dw(x-y,t-s)\,dyds\right) 
	\\
	&\,\leq 
	\int_0^\infty\int_{\T^n} \rho^\al(s)\theta^\al(y) H(x,Dw(x-y,t-s))\,dyds. 
	\end{align*} 
	
	By Proposition \ref{prop:basic} and (A3),
	\[
	|H(x-y,Dw(x-y,t-s))-H(x,Dw(x-y,t-s))|\le C\al \quad 
	\]
	for all $y\in B(x,\al)$ and $t-s>0$.  
	Thus, we finish the proof. 
\end{proof}

\begin{pro}\label{prop:approx}
Let $w$ be a Lipschitz subsolution to \eqref{eq:1B}.
For $\al, \ep, \del>0$, 
let $w^{\al,\ep,\del}\in C^2(\T^n\times[\al,\infty))$ be the function 
defined by \eqref{func:al-ep-del}. 
For $\eta>0$, let $\del_0=\del_0(\ep,\eta)$ be the constant given by \eqref{mdelta0}. 
Then, we have 
\begin{equation}\label{eq:eta}
(w^{\al,\ep,\del})_t-\eta\Del w^{\al,\ep,\del}
-\tr(A(x)D^2w^{\al,\ep,\del})+H(x,Dw^{\al,\ep,\del})
\le 
\kappa(\alpha,\eta, \delta, \ep)
\end{equation}
for all $\del\in(0,\del_0]$ and $(x,t)\in\T^n\times[\al,\infty)$. 
Here,  
\begin{equation}
\label{kappa}
\kappa(\alpha,\eta, \delta, \ep)
:=\om(\ep)+\frac{n\eta}{\ep}+C\max\left\{\frac{1}{\ep},\frac{1}{\del}\right\}\al 
\end{equation}
where 
$\omega\in C([0,\infty))$ is a nondecreasing function $\omega\in C([0,\infty))$ with 
$\omega(0)=0$, 
and $C$ is a positive constant. 
\end{pro}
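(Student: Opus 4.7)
The plan is to superpose a pointwise subsolution inequality for the $C^{1,1}$-regularization $(w^{\ep+\del})_\del$, which is provided by Lemma~\ref{lem:eta}, with the commutator estimates for mollification contained in Lemma~\ref{lem:est-R1}. Since $(w^{\ep+\del})_\del$ lies in $\Lip([0,\infty);C^{1,1}(\T^n))$ by Proposition~\ref{prop:double-conv1}, the inequality
\[
((w^{\ep+\del})_\del)_t - \eta\Del(w^{\ep+\del})_\del - \tr(A(x)D^2(w^{\ep+\del})_\del) + H(x,D(w^{\ep+\del})_\del) \le \om(\ep)+\frac{n\eta}{\ep}
\]
holds for almost every $(x,t)\in\T^n\times(0,\infty)$ whenever $\del\in(0,\del_0(\ep,\eta)]$.

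First I would convolve this almost everywhere inequality against the nonnegative space--time kernel $\rho^\al(s)\theta^\al(y)$. By Fubini, together with the fact that $\partial_t$ and $\eta\Del_x$ commute exactly with convolution, the first two terms in the convolved inequality become $(w^{\al,\ep,\del})_t(x,t)-\eta\Del w^{\al,\ep,\del}(x,t)$, while the diffusion and Hamiltonian terms appear in their convolved form $\int\rho^\al(s)\theta^\al(y)\tr(A(x-y)D^2(w^{\ep+\del})_\del(x-y,t-s))\,dy\,ds$ and $\int\rho^\al(s)\theta^\al(y) H(x-y,D(w^{\ep+\del})_\del(x-y,t-s))\,dy\,ds$.

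The second step is to replace these convolved quantities with expressions evaluated at $x$. Lemma~\ref{lem:est-R1}(i) controls the difference between $\tr(A(x)D^2 w^{\al,\ep,\del}(x,t))$ and the convolved trace by $C\al\max\{1/\ep,1/\del\}$, exploiting the bound $\|D^2(w^{\ep+\del})_\del\|_{L^\infty}\le C\max\{1/\ep,1/\del\}$ from Proposition~\ref{prop:double-conv1} together with the Lipschitz regularity of $A$. Lemma~\ref{lem:est-R1}(ii) controls the difference between $H(x,Dw^{\al,\ep,\del}(x,t))$ and the convolved Hamiltonian by $C\al$, using Jensen's inequality applied to the convex map $p\mapsto H(x,p)$ together with the $x$-Lipschitz bound from Assumption~\ref{A3}. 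Adding all three error contributions produces precisely $\kappa(\al,\eta,\del,\ep)=\om(\ep)+n\eta/\ep+C\max\{1/\ep,1/\del\}\al$.

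The hard part has already been carried out in Lemma~\ref{lem:eta}: the delicate choice $\del\le\del_0(\ep,\eta)$ is what allows the bad eigenvalue of order $1/\del$ of $D^2(w^{\ep+\del})_\del$ to be absorbed against the artificial viscosity $\eta\Del$. The main technical obstacle remaining for the present proposition is the bookkeeping check that integrating the almost-everywhere inequality of Lemma~\ref{lem:eta} against a nonnegative mollifier yields a clean everywhere inequality for the smooth function $w^{\al,\ep,\del}$; this is routine given the $\Lip$ in $t$ and $W^{2,\infty}$ in $x$ regularity of $(w^{\ep+\del})_\del$.
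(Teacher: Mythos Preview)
Your proposal is correct and follows exactly the route the paper takes: the paper's proof consists of the single sentence that the proposition is a straightforward consequence of Lemmas~\ref{lem:eta} and~\ref{lem:est-R1}, and you have simply unpacked how these two lemmas combine. The bookkeeping you describe (convolving the a.e.\ inequality from Lemma~\ref{lem:eta} against $\rho^\al\theta^\al$, then invoking the commutator bounds of Lemma~\ref{lem:est-R1}) is precisely what is intended.
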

\begin{proof}
Proposition {\rm\ref{prop:approx}} is a straightforward result of Lemmas \ref{lem:eta}, \ref{lem:est-R1}.   
\end{proof}

%\begin{remark}
%It is worth emphasizing that $C^2$ regularity is {\color{red} very important} to be a test function for 
%generalized holonomic measures. 
%\end{remark}

We finally apply this approximation procedure to give the characterization 
of the ergodic constant for \eqref{eeB} in terms of generalized Mather measures stated in Proposition \ref{prop:erg}

\begin{proof}[Proof of Proposition \ref{prop:erg}]
Take $\mu\in \widetilde \cH$. 
By the definition of $  \widetilde\cH$, there exist $\{\eta_j\}_{j\in \N}\to0$ and $\{\mu^{\eta_j}\}_{j\in\N}\subset\cP(\T^n \times \R^n)$ such that $\mu^{\eta_j}\in\cH^{\eta_j}$, and 
\[
\mu^{\eta_j}\rightharpoonup \mu \quad\text{ weakly in the sense of measures in} \ 
\cP(\T^n \times \R^n). 
\]

Let $v$ be a Lipschitz viscosity solution to \eqref{eeB}.
For $\alpha,\ep,\delta>0$, denote by 
\begin{equation*}
v^{\al,\ep,\del}(x)
:=
\int_{\T^n} \theta^\al(y)(v^{\ep+\del})_\del(x-y)\,dy.
\end{equation*}
Then, $v^{\al,\ep,\del}\in C^2(\T^n)$.
Let $\tilde{v}:=v^{\al,\ep,\del}$, to simplify the notation.
Because of Proposition \ref{prop:approx}, we have 
\begin{align*}
\kappa(\alpha,\eta,\delta,\ep)
&\,\ge
\int_{\T^n\times\R^n}
\left(-\eta_j\Delta\tilde{v}-\tr(A(z)D^2\tilde{v})+H(z,D\tilde{v}) \right)\, d\mu^{\eta_j}(z,q)\\
&\,\ge
\int_{\T^n\times\R^n}
\left(-\eta_j\Delta\tilde{v}-\tr(A(z)D^2\tilde{v})+q\cdot D\tilde{v}-L(z,q) \right)\, d\mu^{\eta_j}(z,q)\\
&\,=
-\int_{\T^n\times\R^n}L(z,q)\, d\mu^{\eta_j}(z,q). 
\end{align*}
We let $\alpha\to 0$,  $j\to \infty$, and $\ep \to 0$, in this order, 
to get 
\[
\int_{\T^n\times\R^n}L(z,q)\, d\mu(z,q)\ge 0 
\quad \text{ for all} \ \mu\in  \widetilde\cH.  
\]

For $\ep>0$, let $(v^\ep,c^\ep)\in C^2(\T^n)\times\R$ solve
\begin{equation}\label{eq:v-ep}
-\ep\Delta v^\ep-\tr(A(x)D^2v^\ep)+H(x,Dv^\ep)=c^\ep\quad\text{ in} \ \T^n. 
\end{equation}
The constant $c^\ep$ is unique.
Besides, thanks to Assumptions \ref{A1}-\ref{A3}, we have that $v^\ep$ is Lipschitz continuous uniformly in $\ep \in (0,1)$.
Because the ergodic constant $c$ for \eqref{ee} was normalized to be $0$, 
we see that $c^\ep\to0$ as $\ep\to0$. 
Let $\theta^\ep$ solve the associated adjoint equation
\[
\left\{
\begin{array}{ll}
&-\ep\Delta\theta^{\ep}-(a^{ij}\theta^\ep)_{x_ix_j}-\text{div}(D_pH(x,Dv^\ep)\theta^\ep)=0 \quad\text{ in} \ \T^n, \\
&
\displaystyle
\int_{\T^n}\theta^\ep(x)\,dx=1.
\end{array}
\right. 
\]
Next, we define a measure,  $\mu^\ep\in\cP(\T^n\times\R^n)$, as
follows
	\begin{equation*}%\label{def-nu-ep-second}
	\int_{\T^n} \psi(x,Dv^{\ep}) \theta^{\ep}(x)\,dx
	=\int_{\T^n \times \R^n} \psi(x,D_q L(x,q))\,d\mu^{\ep}(x,q)
	\end{equation*}
for all $\psi \in C_c(\T^n\times \R^n)$.% as in the proof of Lemma \ref{L1B}. 

Multiplying \eqref{eq:v-ep} by $\theta^\ep$, integrating on $\T^n$, and using the integration by parts, we get 
\begin{align*}
c^\ep&=\, 
\int_{\T^n}
\Big(-\ep\Delta v^\ep-\tr(A(x)D^2v^\ep)+H(x,Dv^\ep)\Big)\theta^\ep\,dx\\
&=\,
\int_{\T^n}
\Big(H(x,Dv^\ep)-D_pH(x,Dv^\ep)\cdot Dv^\ep\Big)\theta^\ep\,dx
=\int_{\T^n\times\R^n}L(x,q)\,d\mu^\ep. 
\end{align*}
By a similar argument to the one in Lemma \ref{L1}, we see that $\mu^\ep\in\cH^\ep$. 
By extracting a subsequence if necessary, there exists a sequence $\{\ep_j\} \to 0$ such that
\[
\mu^{\ep_j}\rightharpoonup \mu
\quad
\text{ weakly in the sense of measures in} \ \cP(\T^n\times\R^n)
\]
as $j\to\infty$ for some $\mu\in  \widetilde\cH$, and
\[
\int_{\T^n\times\R^n}L(x,q)\,d\mu=0. 
\] 
This also shows that $\mu\in\widetilde{\cM}$, and finishes the proof. 
\end{proof}

%%%%%%%%%%%%%%%%%%%%%%%%%%%%%%%%%%%%%%%%%%%%%%%%%
%%%%%%%%%%%%%%%%%%%%%%%%%%%%%%%%%%%%%%%%%%%%%%%%%
%Reference

\bibliographystyle{plain}
% Diogo
\IfFileExists{"/Users/gomesd/mfgDGOFFICE.bib"}{\bibliography{/Users/gomesd/mfgDGOFFICE.bib}}{\bibliography{mfg.bib}}

\end{document}